\DeclareMathOperator{\Rep}{Rep}
\DeclareMathOperator{\cl}{cl}
\DeclareMathOperator{\prim}{prim}
\DeclareMathOperator{\GL}{GL}
\DeclareMathOperator{\CHM}{CHM}
\DeclareMathOperator{\DM}{DM}
\DeclareMathOperator{\CH}{CH}
\DeclareMathOperator{\HW}{H}
\DeclareMathOperator{\HS}{HS}
\DeclareMathOperator{\GSp}{GSp}
\DeclareMathOperator{\NUM}{NUM}
\DeclareMathOperator{\Gal}{Gal}
\DeclareMathOperator{\Immaginario}{Im}
\DeclareMathOperator{\interiore}{int}
\DeclareMathOperator{\End}{End}
\DeclareMathOperator{\Hom}{Hom}
\DeclareMathOperator{\num}{num}
\DeclareMathOperator{\Id}{Id}
\DeclareMathOperator{\Sym}{Sym}
\DeclareMathOperator{\Spec}{Spec}
\DeclareMathOperator{\ab}{ab}
\DeclareMathOperator{\cris}{cris}
\renewcommand{\Im}{\Immaginario}
\renewcommand{\int}{\interiore}
\renewcommand{\tilde}{\widetilde}
\newtheoremstyle{theorem}{11pt}{11pt}{\itshape}{}{\bfseries}{.}{.5em}{}
\newtheoremstyle{note}{11pt}{11pt}{}{}{\bfseries}{.}{.5em}{}
\theoremstyle{note}
\newtheorem{defin}{Définition}[section]
\newtheorem{exemple}[defin]{Exemple}
\newtheorem{rem}[defin]{Remarque}
\newtheorem{paragrafo_numerato}[defin]{}
\newtheorem{paragrafo_numerato_nome}[defin]{}
\theoremstyle{theorem}
\newtheorem{thm}[defin]{Théorème}
\newtheorem{cor}[defin]{Corollaire}
\newtheorem{lem}[defin]{Lemme}
\newtheorem{prop}[defin]{Proposition}
\newtheorem{conj}[defin]{Conjecture}
\newcommand{\Q}{\mathbb{Q}}
\newcommand{\Z}{\mathbb{Z}}
\newcommand{\R}{\mathbb{R}}
\newcommand{\C}{\mathbb{C}}
\newcommand{\Hdg}{\textrm{Hdg}}
\newcommand{\Sm}{\textrm{Sm}}
\newcommand{\PSh}{\textrm{PSh}}
\newcommand{\cM}{\mathcal M}
\newcommand{\bC}{\mathbb{C}}
\newcommand{\Mot}{{\cM ot}}
\newcommand{\Mat}{{\cM at}}
\newcommand{\GrVect}{\mathrm{GrVect}}
\newcommand{\Fil}{\mathrm{Fil}}
\newcommand{\dR}{\mathrm{dR}}
\newcommand{\eval}{\mathrm{eval}}
\newcommand{\degtr}{\mathrm{degtr}}
\newcommand{\oset}[3][0.2ex]{%
	\mathrel{\mathop{#3}\limits^{
			\vbox to#1{\kern-2\ex@
				\hbox{$\scriptstyle#2$}\vss}}}}
\newenvironment{dedication}
{
   \cleardoublepage
   \thispagestyle{empty}
   \vspace*{\stretch{1}}
   \hfill\begin{minipage}[t]{0.66\textwidth}
   \raggedright
}%
{
   \end{minipage}
   \vspace*{\stretch{3}}
   \clearpage
}
\DeclareMathOperator{\Aut}{Aut}
\DeclareMathOperator{\Isom}{Isom}
\DeclareMathOperator{\Emb}{Emb} 
\DeclareMathOperator{\Nat}{Nat}
\DeclareMathOperator{\id}{\mathsf{id}}
\DeclareMathOperator{\tr}{tr}
\newcommand{\isocan}{\xrightarrow{\hspace{1.85pt}\sim \hspace{1.85pt}}}
\numberwithin{equation}{section}
\author{Giuseppe Ancona}
\address{IRMA\\
Université de Strasbourg
7, rue René Descartes  \\
67000 Strasbourg}
\email{ancona@math.unistra.fr}
\urladdr{http://irma.math.unistra.fr/~ancona/}
\begin{document}

 \title[Mémoire HdR]{Quelques aspects arithmétiques et géométriques des cycles algébriques et des motifs}

\maketitle

\setcounter{tocdepth}{1}

\begin{center}
{Habilitation à diriger les recherches}
 \end{center}

\vspace{1cm}

Soutenue le 17 Novembre 2022 devant le jury composé de :

\vspace{1cm}

Yves André, examinateur

Joseph Ayoub, rapporteur

  Jean-Benoît Bost, examinateur
  
Anna Cadoret, rapporteur

François Charles, rapporteur

  Carlo Gasbarri, examinateur
  
  Rutger Noot, garant

\tableofcontents

\setcounter{section}{-1}
\setcounter{tocdepth}{1}

\newpage
  \null
    \thispagestyle{empty}
    \newpage
\section*{Remerciements} 

On m'avait souvent décrit l'Habilitation comme une formalité par laquelle il fallait passer. Je me rends compte qu'en fait d'une part j'ai pris plaisir à rédiger ce mémoire et d'autre part c'est avec un peu d'émotion que je m'approche à cette soutenance. Je serai entouré par des collègues et des amis qui me sont chers et j'aimerais par ces quelques lignes les remercier.

\

Merci aux trois rapporteurs d'avoir accepté de consacrer leur temps à la lecture de mon mémoire. Joseph Ayoub a été mon encadrant de postdoc et c'est une figure centrale de la théorie des motifs. J'ai bénéficié de nombreuses discussions avec lui et c'est un grand honneur pour moi l'attention qu'il a porté à mes travaux. 

J'ai rencontré Anna Cadoret pendant ma thèse à l'occasion d'un groupe de travail organisé par son ANR. J'ai le souvenir d'une atmosphère détendue et ouverte vers les jeunes, comme il arrive rarement dans notre métier (atmosphère que l'on a essayé de reproduire avec la RéGA, puis le JAVA). J'ai toujours aimé l'originalité des questions qu'elle sait se poser et je me réjouis de ses appréciations à ce mémoire. 

François Charles était 2 ans avant moi à l'ENS, il a représenté pour plusieurs d'entre nous une  référence. J'ai toujours été impressionné par ses fortes intuitions et je le remercie pour les échanges que l'on a eu et  pour son intérêt   pour mes résultats.

\ 

Rutger Noot a été rapporteur de ma thèse et m'a beaucoup encouragé après : je lui en suis reconnaissant. Depuis que je suis à Strasbourg, j'ai pu profiter de sa culture mathématique à plusieurs reprises. C'était pour moi le choix naturel de garant et je le remercie d'avoir accepté. 

Il n'y a probablement aucune section de ce mémoire où le nom d'Yves André n'apparaisse pas. Bien que j'essaie constamment d'élargir mes intérêts et m'ouvrir à de nouveaux domaines, je finis toujours par y découvrir un théorème fondamental qui lui est dû. Sa présence dans mon jury est un honneur pour moi. 

J'ai suivi un cours de Jean-Benoît Bost en M2 : sa clarté et sa vision sont devenues    une référence pour moi. Je le remercie pour son soutien depuis cette époque ; c'est un plaisir pour moi de l'avoir dans mon jury d'Habilitation.  

Les déjeuners à la cantine sont souvent enrichis par les digressions mathématiques (toujours passionnées) de Carlo Gasbarri. Je le remercie pour celles que l'on a eu, celles que l'on aura et  d'avoir accepté de faire partie du jury.

\

Je remercie Emiliano Ambrosi,  Pierre Baumann, Mattia Cavicchi, Frédéric Chapoton, Dragos Fratila,   Florence Lecomte et Rutger Noot pour leurs relectures aux premières versions de ce texte.

\

Certains des résultats présentés ici sont issus de collaborations avec des mathématiciens qui m'ont beaucoup appris. Je les remercie tous ici, à partir de ma première collaboratrice, Annette Huber, avec laquelle nous continuons à beaucoup échanger, jusqu'au plus jeune, Mattia Cavicchi, mathématicien doué à qui je souhaite tout le meilleur pour ses débuts de carrière.

Mes recherches mathématiques n'auraient pas pu avoir lieu sans les échanges, les conseils et les remarques de mes chers amis mathématiciens Olivier Benoist, Yohan Brunebarbe, Javier Fres\'an et Marco Maculan. 

Quand je repense à mon passé mathématique je dois remercier mon directeur de thèse Jörg Wildeshaus, ainsi que Frédéric Déglise qui était chercheur à Paris 13 à l'époque de ma thèse et qui m'a beaucoup appris.

\

Pendant ces années à l'IRMA j'ai eu le plaisir de discuter de mathématiques avec de nombreux collègues, tant dans mon équipe que dans les autres. Je pense en particulier à Emilano Ambrosi, Dragos Fratila, Robert Laterveer, Yohann Le Floch, Adriano Marmora, Pierre Py et Ana Rechtman. 

L'ambiance au laboratoire est bonne, j'ai échangé avec à peu près tout le monde à l'occasion d'enseignements, conseils ou autre et j'ai souvent trouvé de l'empathie et de la bienveillance. Chers collègues qui lisez ces lignes, je vous en remercie ! 

Le travail d'enseignant-chercheur a besoin d'un soutien administratif et technique considérable. Nous sommes gâté de ce point de vue-là à Strasbourg : je remercie en  particulier  Alexandra Carminati, Sandrine Cerdan, Pascale Igot, Delphine Karleskind, Jessica Maurer-Spoerk, Alexis Palaticky et Alain Sartout.

\

Je suis ému par la présence de plusieurs amis et collègues qui n'habitent pas à Strasbourg et qui ont fait de la route pour venir voir ma soutenance : Javier, Mattia, Olivier, Quentin et Yohan, merci ! Je suis également touché par la présence d'amis qui ont décidé de passer une heure de leur vie à me voir délirer devant des tableaux bleus : Joanne, Cédric, François... cela me touche beaucoup.

\

Enfin, merci à tout ce qui est non-mathématique dans ma vie et qui la rend si heureuse. Je pense aux amis du tango de la belle association Hermosa : merci pour la bonne ambiance des milongas. Je pense aux amis du foot du terrain 5 : merci pour les matchs intenses et pour les après-matchs encore plus intenses. Je pense aux jogging avec  Gianluca (accompagnés toujours de questionnements et conseils réciproques). Je pense aux jeunes parents avec lesquels on partage les apéros où l'on ne peut jamais terminer nos phrases   ainsi que les fêtes regrettées le lendemain. Et évidemment je pense à ma famille pétillante. Merci à toutes les générations de la Nonna Michelina qui me chantait l'opéra pendant qu'elle préparait ses pâtes  jusqu'à mes filles qui sautent toujours à mon cou. Merci à Anna qui a toujours envie de   me voir donner un exposé et toujours la joie de le célébrer. Merci à ma mère qui nous a préparé un super pot (vous verrez...), à mon père qui me parlait de sciences en voiture pendant qu'il m'accompagnait au judo, à mes frères et à notre chambre à trois où on pratiquait tout sport possible, à  la famille de Maglie qui nous a toujours accompagné avec amour et à ma belle famille avec qui on partage voyages, bonnes bouffes et discussions sur le monde.

\newpage

\begin{dedication}
A Anna, compagna di viaggio.

A Camilla e Gaia, il nostro viaggio.
\end{dedication}

\newpage
  \null
    \thispagestyle{empty}
    \newpage

\section{Introduction}

 Le protagoniste de ce mémoire est un morphisme d'anneaux gradués\footnote{La graduation dans $\CH(X)$ est induite par la codimension des sous-variétés et la multiplication est appelée produit d'intersection.} appelé application classe de cycle
 \[\cl_X : \CH(X) \longrightarrow \HW(X).\]
 Ici $X$ est une variété projective et lisse définie sur un corps $k$, $\HW$ est une cohomologie de Weil\footnote{Par exemple si $k=\bC$ on pourra choisir la cohomologie singulière ou  pour $k$ de caractéristique $p\geq 0$ et $\ell$ un nombre premier tel que $\ell\neq p$ on pourra choisir la cohomologie $\ell$-adique.} et $ \CH(X)$ est l'anneau de Chow à coefficients rationnels.
 Il faut penser à $\CH(X)$ comme un invariant de $X$  de nature algébrique et à $\HW(X)$ comme un invariant   de nature topologique : l'application $\cl_X$ compare ces différentes natures.
 
 Les questions autour de $\cl_X$ peuvent se diviser grossièrement en trois classes :
 \begin{enumerate}
 \item Décrire l'image de $\cl_X$,
 \item Décrire le noyau de $\cl_X$,
 \item Décrire la structure multiplicative de $\cl_X$.
 \end{enumerate}
 
 Par exemple, des conjectures qui entrent dans la première classe sont Hodge, Tate, la conjecture des périodes de Grothendieck, ou encore les conjectures standard de type Künneth et de type Lefschetz. 
 Dans la deuxième classe on trouve la conjecture de Bloch--Beilinson ou la conjecture de nilpotence.
 Certaines conjectures sont à cheval entre la première et la deuxième classe, par exemple la conjecture de conservativité ou la conjecture sur la dimension finie de Kimura. 
 Rentrent  dans la troisième classe les conjectures standard de type Hodge et de type  \og$\hom=\num$\fg.
 
 \
 
 Pour décomposer l'étude dans ces trois classes on peut d'abord factoriser $\cl_X$ (comme morphisme d'anneaux) et obtenir le diagramme  suivant :
  \[
\xymatrix{
    \CH(X)  \ar@{->>}[d]^{p} \ar[rd]^{\cl_X} &   \\
      \CH(X)/ \ker \cl_X \ar@{^{(}->}[r]^{\hspace{0.5cm}i}  & \HW(X) .
   }
    \]
    L'anneau quotient $ \CH(X)/ \ker \cl_X $ est également noté
    $ \CH(X)/ \hom$ et appelé anneaux des cycles modulo l'équivalence homologique.

Les questions appartenant à la classe (1) ci-dessus reviennent alors à l'étude de l'injection $i$ et celles de la classe (2) à la projection $p$. 
Pour exprimer la  classe (3) on considère l'équivalence numérique\footnote{Cette équivalence rend tous les points de la variété équivalents et les cycles de codimension complémentaire sont mis en dualité par le produit d'intersection.} et on complète le diagramme ainsi :

 \begin{align}\label{diagramme chow}
\xymatrix{
    \CH(X)  \ar@{->>}[d]^{p} \ar[rd]^{\cl_X} &   \\
      \CH(X)/ \hom \ar@{^{(}->}[r]^i \ar@{->>}[d]^{q} & \HW(X). \\
    \CH(X)/ \num & 
  }
    \end{align}
    La compréhension de  l'algèbre $\CH(X)/ \num$ et de la projection $q$ encode une bonne partie des questions de la  classe (3) ci-dessus.

\

Les conjectures principales sur les cycles algébriques s'expriment mieux  si on passe aux motifs. Cela revient, grosso-modo, à considérer le diagramme \eqref{diagramme chow} pour toutes les variétés $X$ à la fois : 
     
\begin{align}\label{diagramme motifs}
\xymatrix{
    \CHM(k)  \ar@{->>}[d]^{\pi} \ar[rd]^{R} &   \\
      \Mot(k)  \ar@{^{(}->}[r]^I \ar@{->>}[d]^{\pi'} & \GrVect. \\
    \NUM(k) & 
  }
    \end{align}
 
Ici $R$ indique la réalisation des motifs de Chow vers les espaces vectoriels gradués : c'est une collection d'applications classe de cycle.
On a de plus des foncteurs pleins $\pi,\pi'$ de projection vers les motifs homologiques et les motifs numériques et un foncteur fidèle $I$ des motifs homologiques  vers les espaces vectoriels gradués, également appelé réalisation.

\ 
    
    \
    
    Le théorie des motifs est utile non seulement à la formulation de questions sur les cycles algébriques mais  aussi à la démonstration de résultats sur ces derniers, voir la Section \ref{pourquoi motifs} pour une discussion de ce point. 
    En guise d'exemple, voici une petite liste de résultats - certains issus de mes travaux - où l'on remarquera que les motifs n'apparaissent pas dans les énoncés et pourtant  sont cruciaux dans leurs preuves.
\begin{thm}\label{thm intro}
 \begin{enumerate}
 \item (Kahn \cite{Kahn}) L'application classe de cycle est injective pour les produits de courbes elliptiques sur un corps fini.
\item (Kimura \cite{Kim}) Soit $S$ un surface complexe projective et lisse dominée par un produit de courbes. Si l'application classe de cycle est surjective alors elle est injective.
\item \cite{Anc21} Soit  $A$ une variété abélienne de dimension quatre, alors le produit d'intersection
\[\CH^2(A)/{\num} \times \CH^2(A)/{\num}  \longrightarrow \Q\]
est de signature $(\rho_2 - \rho_1 +1; \rho_1 - 1)$, où
$\rho_n=\dim_\Q (\CH^n(A)/{\num}).$ 
 \item \cite{AHP}  Soient $S$ un schéma de base régulier et $G$ un $S$-schéma en groupes commutatifs. Alors l'action du morphisme
\[n_G: G \longrightarrow G\]
de  multiplication par $n$
décompose l'espace $\CH(G) $ en une somme finie de sous-espaces propres (de plus les valeurs propres sont des  puissances explicites  de $n$ et la décomposition ne dépend pas de l'entier $n\geq 2$ choisi).
 \end{enumerate}
 \end{thm}
 
 Par sa nature même, la théorie des motifs se mélange aux différentes cohomologies de Weil que l'on peut considérer. Ainsi, en fonction du corps de base $k$, on se retrouve à utiliser  la théorie de Hodge ($k=\bC$), les représentations galoisiennes ($k$ corps de nombres), l'arithmétique des nombres de Weil ($k$ fini) ou encore de la théorie de Hodge $p$-adique (en caractéristique mixte). 
 Cela dégage les aspects arithmétiques et géométriques de la théorie. 
 
 Ces différentes théories cohomologiques ont des analogies, que l'on retrouve dans les motifs par des théorèmes ou des conjectures,  ainsi que des différences, que la théorie des motifs vise à réparer, voir la Section \ref{coho Weil}.
 
 \
 
 On peut distinguer les différents résultats dans le domaine des motifs d'une part par la partie du diagramme \ref{diagramme motifs} que l'on étudie et d'autre part par la nature géométrique ou arithmétique du corps de base $k$  qui est concerné.
 Pour aider la lecture du texte qui suivra, voici une répartition des travaux présentés.
 
 \vspace{0.5cm}                                 
 
 \noindent\begin{tabular}[b]{|c|c|c|c|c|}

       \hline 
     &  Arithmétique   &   Géométrie \\
   \hline
   $\CHM(k)$    & \S\ref{section conserv}   \cite{Ascona} &  \S\ref{AHP} \cite{AEH,AHP}     \\

   \hline
  $ \Mot(k) $  & \S\ref{section andre}   \cite{AF}   &   \S\ref{section lef} \cite{ACLS}  \\
   
   \hline 
  $\NUM(k)$     & \S\ref{positivite}   \cite{Anc21}  &    \\

  \hline
  \end{tabular}

  \subsection*{Organisation du texte}
  Les premières sections du texte fournissent une introduction partielle à la théorie.
 Nous avons déjà mentionné que les motifs sont utiles à l'étude des cycles algébrique et donné le Théorème \ref{thm intro} comme exemple, mais nous n'avons pas dit pourquoi ils sont utiles : c'est le but de la Section \ref{pourquoi motifs}. La Section \ref{coho Weil} présente la théorie de Hodge et ses pendants arithmétiques ; on insiste sur les analogies mais surtout sur les différences. La Section \ref{conjectures standard} donne des conjectures sur les cycles algébriques et la Section \ref{exemple} des exemples de motifs. Ces questions et ces exemples sont repris dans les sections successives où l'on présente différents résultats organisés selon le diagramme ci-dessus. Dans la Section \ref{section conserv} on démontre la conjecture de conservativité pour les motifs provenant de variétés abéliennes définies sur un corps fini. La Section  \S\ref{positivite} concerne la conjecture standard de type Hodge et montre  le Théorème \ref{thm intro}(3). Dans la Section  \S\ref{section andre} on introduit une nouvelle classe de périodes $p$-adiques qui surgit dans l'étude des classes algébriques en caractéristique mixte. Dans la Section \S\ref{AHP} on montre  le Théorème \ref{thm intro}(4) qui nécessite l'utilisation de techniques motiviques modernes. La Section \S\ref{section lef} étudie les classes algébriques  de certaines variétés hyper-kähler  qui admettent une fibration lagrangienne.

  \section{A quoi servent les motifs ?}\label{pourquoi motifs}
Dans un article du même titre \cite{DelMot}, Deligne expliquait  que les motifs n'ont qu'une utilité essentiellement philosophique  permettant de transférer des idées d'une cohomologie à l'autre, grosso-modo en appliquant le diagramme \ref{diagramme motifs} à différentes cohomologies. 
Aujourd'hui  on comprend que les motifs sont aussi un vrai outil technique, comme l'avait envisagé Grothendieck. 
Deligne même revoit sa position dans son Bourbaki sur les multizétas et explique que les travaux de Brown sont \og  un des cas où la philosophie des motifs est non seulement
un guide précieux, mais permet des démonstrations\fg{} \cite{DelBrown}.

 J'aimerais expliquer ici l'utilité des motifs notamment dans la théorie des cycles algébriques  :  une liste d'énoncés où leur utilisation est cruciale a déjà été donnée dans le Théorème \ref{thm intro}.
 Plusieurs avertissements sont tout de même nécessaires. Premièrement, je ne prétends pas que cet outil soit l'unique possible, beaucoup de résultats intéressants sur les anneaux de Chow ont été obtenus par d'autres méthodes.
 Deuxièmement, à l'heure actuelle les applications les plus impressionnantes des motifs apparaissent plutôt dans la théorie des périodes \cite{Brown,AyoubKZ} - on peut espérer que les applications majeures de la théorie aux anneaux de Chow sont encore à venir.
 
 \
 
  Dans sa construction de la théorie des motifs purs (i.e. pour les variétés propres et lisses) Grothendieck avait imaginé un pont entre les cycles algébriques et, par exemple, la cohomologie   $\ell$-adique. 
  Son idée était que la compréhension des premiers aurait impliqué ainsi des résultats sur la deuxième, par exemple les conjectures standard auraient impliqué les conjectures de Weil.  
  En un sens cela semblait la direction raisonnable : les cycles étaient là depuis plus longtemps (on pourrait dire depuis le théorème de Bézout) et leur définition pouvait les faire paraître comme plus accessibles.
  
  On comprend aujourd'hui qu'ils sont plus mystérieux que ce que l'on aurait pu imaginer. Ceci est devenu flagrant probablement avec le théorème de Mumford \cite{Mumzero} qui montre que  le groupe des $0$-cycles ne peut pas, en général, être paramétré par une variété de type fini. En revanche beaucoup de progrès   ont été faits sur la cohomologie.
  Ce pont maintient donc toute son utilité mais il faut plutôt le parcourir dans l'autre direction : on essaiera d'exploiter des informations cohomologiques et de les transposer sur les cycles via les motifs.

 La question naturelle qui se pose est alors pourquoi l'application classe de cycle ne serait pas elle-même suffisante pour un tel pont entre anneaux de Chow et cohomologie ?
 Une réponse courte est que la cohomologie a des propriétés agréables (Künneth, Poincaré,$\ldots$) que les anneaux de Chow n'ont pas. Les motifs sont une façon de réorganiser les applications classe de cycle de sorte à ce que l'on ait encore ces propriétés agréables.
 
 \subsection{Motifs purs}\label{motifs purs} Pour expliquer l'idée derrière la construction, prenons la situation suivante (inspirée par le formalisme tannakien). Soit  $\phi:H \rightarrow G$ un  morphisme de groupes et étudions le foncteur induit
 \begin{align}\label{analogia1}  f=\phi^* :  \Rep_F(G) \longrightarrow \Rep_F(H)\end{align}
 sur les représentations $F$-linéaires pour $F$ un corps fixé.
 Supposons avoir à notre disposition pour cette étude uniquement la collection d'applications 
 \begin{align}\label{analogia2}  c_V : V^G   \longrightarrow V^H \subset f(V) \end{align}
 pour chaque $V \in  \Rep(G)$.
 La donnée de cette collection est   certainement moins agréable que la donnée de $f$.
 On peut tout de même retrouver $f$ en remarquant que son action sur les morphismes est donnée par
  \begin{align}\label{invequi} c_{W\otimes V^{\vee}} : \Hom_G(V,W)=  (W\otimes V^{\vee})^G   \longrightarrow   (W\otimes V^{\vee})^H  =  \Hom_H(V,W) . \end{align}
  
  Cette idée de passer de l'invariant à l'équivariant est l'étape essentielle dans la construction de Grothendieck des motifs. 
Dans ce cas pour chaque variété $X$, $V$ sera son motif $\mathfrak{h}(X)$ (objet abstrait de la catégorie en construction), $V^G$ sera l'anneau de Chow $\CH(X)$,   $f(V)$ sera la  cohomologie singulière (ou $\ell$-adique,$\ldots$) $\HW(X)$, $V^H$ seront les classes de Hodge $\Hdg(X)$ (ou les classes Galois invariantes, $\ldots$) et $c_V $ sera l'application classe de cycle $\cl_X $. En résumant :
\begin{align}\label{Eq:realization functors}
\begin{array}{rcl}
         V &\rightsquigarrow&\mathfrak{h}(X), \\
         V^G& \rightsquigarrow & \CH(X), \\
         f(V) &\rightsquigarrow & \HW(X), \\
         V^H & \rightsquigarrow & \Hdg(X) ,\\
         c_V & \rightsquigarrow &\cl_X.
	\end{array}
\end{align}
Pour que la construction dans \eqref{invequi} soit applicable il faut donner un sens au dual d'un motif et au produit tensoriel de deux motifs. 
C'est ici qu'il est nécessaire de considérer des variétés propres et lisses.
En effet la formule de Künneth et la dualité de Poincaré suggèrent $\mathfrak{h}(X)\otimes \mathfrak{h}(Y) = \mathfrak{h}(X \times Y)$ et $\mathfrak{h}(X)^{\vee}=\mathfrak{h}(X)$ (à un twist de Tate près). L'espace $\Hom(\mathfrak{h}(X) , \mathfrak{h}(Y) )$ sera alors contrôlé\footnote{Pour les variétés générales qui ne sont pas projectives et lisses l'argument ci-dessus suggère qu'il n'est pas raisonnable d'espérer un lien entre  $\Hom(\mathfrak{h}(X) , \mathfrak{h}(Y) )$  et  des anneaux de Chow. Effectivement dans les motifs de Voevodsky ces $\Hom$ n'ont pas d'interprétation en terme d'invariants classiques, voir aussi la Remarque \ref{garufa}(4).} par $\CH(X \times Y)$. 

Une fois que la construction de cette catégorie est faite on pourra imaginer - et essayer de démontrer - des analogies entre $\HW(X)$ et $\mathfrak{h}(X)$ qui ne seraient pas raisonnables avec $\CH(X)$ pour ensuite déduire des informations sur les anneaux de Chow en passant aux $\Hom$ dans la catégorie.  Par exemple on pourra imaginer que $\HW(X)$ et $\mathfrak{h}(X)$ ont \og la même dimension \fg, ce qui ne peut pas avoir lieu avec $\CH(X)$ - voir l'analogie \eqref{analogia1}. Cette idée est à la base de la notion de dimension dans les motifs due à Kimura et O'Sullivan qui est l'ingrédient essentiel dans la preuve  du  Théorème \ref{thm intro}(1) et joue également un rôle dans les parties (2) et (4) du même théorème.


\subsection{Motifs mixtes}\label{motifs mixtes} La théorie a beaucoup évoluée depuis ses fondations.
Comme la théorie de Hodge, qui  a évolué d'abord avec les structures de Hodge mixtes associées à des  variétés qui ne sont pas forcément projectives ou lisses, pour arriver jusqu'aux modules de Hodge mixtes qui visent à étudier des familles de variétés sur des bases générales, également la théorie des motifs a eu une accélération significative avec la catégorie triangulée des motifs mixtes de Voevodsky jusqu'aux motifs relatifs sur une base générale \cite{Ayoub_these_1,CD}.
Ces   catégories   sont liées par le formalisme des six foncteurs, tout comme    les modules de Hodge mixtes. 
De plus, certains $\Hom$ dans ces catégories permettent de retrouver les anneaux de Chow.
On dispose également de foncteurs de réalisation, par exemple vers les faisceaux constructibles, qui permettent  de retrouver les applications classes de cycles.

Tout comme dans le cas   des variétés projectives et lisses expliqué plus haut, ces  catégories ont l'avantage d'avoir des analogies avec les catégories cohomologiques.
Par exemple on dispose d'une filtration de poids sur les motifs tout comme en théorie de Hodge mixte \cite{Bon}. (Une telle structure  n'a pas de bons analogues dans les anneaux de Chow : par exemple un ouvert d'un espace affine a un anneau de Chow trivial alors que la filtration de poids en cohomologie peut être   non triviale.) Un autre exemple est le résultat suivant d'Ayoub \cite[Proposition 3.24]{Ayoubet} : une application $f$ entre motifs au-dessus d'une base $S$ est un isomorphisme si et seulement si la restriction de $f$ en tout point de $S$ l'est. Cet énoncé est bien entendu inspiré de son pendant pour les faisceaux constructibles ou étales.

Ces nouvelles catégories présentent un deuxième avantage :
on dispose maintenant de beaucoup plus de flexibilité, analogue à celle permise par les modules de Hodge mixtes.
Par exemple, si on veut  étudier l'anneau de Chow d'une variété projective et lisse $X$, on pourrait avoir envie de   stratifier $X$ et d'étudier chaque strate, ou de   fibrer $X$ au-dessus d'une base et d'étudier comment les fibres varient. 
Ce genre de construction mène très souvent à des variétés qui ne sont pas lisses et pour lesquelles les anneaux de Chow et leur fonctorialité ne sont pas définis : ces catégories de motifs permettent, entre autre, de contourner ce problème.

Ces techniques ont permis la construction de certains cycles \og concrets \fg{}, par exemple certains cycles prédits par la conjecture de Hodge joint au théorème de décomposition, qui étaient inaccessible par des méthodes directes. 
Notamment cela a été appliqué  aux variétés de Shimura \cite{WildShi}, aux fibrés en quadriques \cite{CDN} et aux variétés hyper-kähler qui admettent une fibration lagrangienne \cite{ACLS}.

\

\subsection{Complexes motiviques}\label{complexes motiviques} Une troisième raison pour laquelle les motifs sont utiles à l'étude des cycles algébriques vient de leur définition moderne (depuis Voevodsky).
Dans la théorie de Grothendieck les motifs sont des symboles  formels et leur lien avec les cycles algébriques a lieu par construction.
Dans les catégories modernes les motifs sont des complexes de faisceaux et il est possible d'en construire un certain nombre explicitement. 
Leur lien avec les cycles algébriques est loin d'être une tautologie et c'est en fait un des résultats plus profond de la théorie.
On peut alors espérer que certaines questions   délicates sur les cycles algébriques puissent devenir concrètes dans leur pendant faisceautique.
C'est ce qui se passe notamment dans la construction de la réalisation de Betti \cite{Ayoubbetti} ou dans la preuve du   Théorème \ref{thm intro}(4), voir Section \ref{AHP}.

  \section{Cohomologies de Weil}\label{coho Weil}
  
  Dans cette section on rappelle la définition classique de structure de Hodge (Définition \ref{def HS}) ainsi qu'une formulation équivalente qui se prête  à mieux décrire les  propriétés de positivité (Définition \ref{def HSR}). Le but principal de la section est de rappeler ces propriétés de positivité ainsi que des propriétés d'autodualité des structures de Hodge puis de  montrer que leurs analogues en cohomologie $\ell$-adique sont faux en général (Remarque \ref{rem non autodual} et Exemple \ref{non autodual}). La définition de polarisation est cruciale, on essaie de la justifier dans la Remarque \ref{rem esempio pol}.
  
  D'autres différences entre la théorie de Hodge et ses analogues arithmétiques sont éparpillées un peu partout dans le texte, voir par exemple la Conjecture \ref{conj nonso} ou la Remarque   \ref{rem faltings}.
  \begin{defin}\label{def HS} (Structure de Hodge, définition classique.)
Une structure de Hodge pure de poids $n\in\Z$ est la donnée  d'un $\Q$-espace vectoriel de dimension finie $V$ muni d'une décomposition 
   \begin{align}\label{decomposition HodgeC} V\otimes_{\Q} \C=\bigoplus_{\substack{p+q=n \\ p, q \in \Z  }} V^{p,q}  \end{align}
  vérifiant $V^{p,q}= \overline{V^{q,p}}$.
     \end{defin}
  \hspace{1cm}
    
        Si on considère les espaces $ V^{\{p,q\}} = V\otimes_{\Q} \R\cap (V^{p,q} + V^{q,p})$
        on trouve la définition équivalente suivante.
        
\begin{defin}\label{def HSR} (Structure de Hodge, définition équivalente.)
Une structure de Hodge pure de poids $n\in\Z$ est  un triplet formé  d'un $\Q$-espace vectoriel de dimension finie $V$, d'une décomposition 
  \begin{align}\label{decomposition Hodge} V\otimes_{\Q} \R=\bigoplus_{\substack{ p\leq q \\ p+q=n \\ p, q \in \Z  }} V^{\{p,q\}}\end{align}
  en sous-espace réels  et d'une structure complexe sur   les espaces  $V^{\{p,q\}}$  pour $p \neq q$.
  
  Les paires $(p,q)$ qui apparaissent dans la décomposition sont appelées les types de $V$.
    \end{defin}
    \begin{rem}\label{lien classique} (Lien entre les définitions.)
    Les deux présentations ci-dessus sont bien équivalentes. On remarque que, pour $p < q$,   l' espace $V^{\{p,q\}}\otimes \C$ est muni de deux structures complexes, par conséquent on a une décomposition 
    \[V^{\{p,q\}}\otimes \C = V^{p,q} \oplus V^{q,p},\] où $V^{p,q}$ est l'espace  où les structures coïncident  et $V^{q,p}$ est l'espace où elles sont conjuguées\footnote{On pourrait inverser le rôle  de ces deux espaces, dans ce cas il faudrait changer les signes dans la Définition \ref{def pol imp} pour avoir les mêmes conventions de signe classiques.}.
    
     La Définition \ref{def HSR}  est plus pratique pour exprimer des propriétés de positivité\footnote{En revanche la structure tensorielle s'exprime mieux avec la Définition \ref{def HS}, par la règle
    \[(V\otimes W)^{p,q}=\bigoplus_{\substack{a+c=p \\ b+d=q  }}V^{a,b}\otimes  W^{c,d}. \]
    Dans le cas réel on remarque que $V^{\{a,b\}}\otimes  W^{\{c,d\}}$ a deux structures complexes quand $a<b$ et $c<d$. Ceci induit une décomposition en deux espaces
    comme précédemment, celui où les structures coïncident contribue à $(V\otimes W)^{a+c,b+d}$ et l'autre à $(V\otimes W)^{m,M}$ où $m$ et $M$ sont le minimum et le maximum de la paire $\{  a+d, b+c\}$.}, voir par exemple Définition \ref{def pol pair}. 
    \end{rem}
    \begin{exemple}\label{exemple HS}
    \begin{enumerate}
       \item (Cohomologie singulière.) Si $X$ est une variété projective et lisse sur les nombres complexes sa cohomologie singulière est munie d'une structure de Hodge fonctorielle en $X$, plus précisément $H^n(X)$ est pure de poids $n$. 
    Tous les théorèmes que l'on peut imaginer (Poincaré, Künneth, Lefschetz,$\ldots$) respectent cette structure supplémentaire.
    
    \item\label{exemple HS A} (Variétés abéliennes.) Dans le cas particulier d'une variété abélienne complexe $A$ dont la variété analytique sous-jacente est $\C^g/\Lambda$ on a $H_1(A)\otimes \R = \Lambda \otimes \R = \C^g$, ce qui fait apparaître explicitement la structure complexe dans la Définition \ref{def HSR} de l'espace $H_1(A)\otimes \R $.
    
      \item\label{model tate}  (Twist de Tate.) En poids  $2n$ il existe une seule structure de Hodge de dimension $1$ à isomorphisme non unique près. On fixe une telle structure de    Hodge et on la note $\Q(-n)$. Pour une structure de Hodge $V$ on note $V(-n)=V\otimes \Q(-n)$ et on l'appelle twist de Tate; on choisit les  $\Q(-n)$ de sorte à avoir l'identification $\Q(a)\otimes \Q(b)=\Q(a+b).$
      
      Traditionnellement on choisit $\Q(n)=(2i\pi)^n \Q \subset \C$, puisque c'est la structure de Hodge qui apparaît naturellement dans la cohomologie singulière de degré maximale. Ce choix particulier ne joue pas de rôle dans les constructions qui suivront, sauf pour la notion de polarisation, voir Remarque \ref{rem HR}\eqref{choix Deligne}.
    \end{enumerate}
\end{exemple}
\begin{rem} (Autodualité.)
Une structure de Hodge n'est pas, en général, autoduale dans le sens où $V$ et $V^{\vee}$  ne sont pas en général isomorphes, même à un twist près (le seul twist pour lequel cette autodualité est raisonnable étant le poids).

Par exemple prenons une structure de Hodge de poids $0$ et  supposons que la décomposition \eqref{decomposition Hodge} ait deux facteurs non nuls
$V\otimes_{\Q} \R= V^{\{0,0\}} \oplus V^{\{-1,+1\}}.$
Supposons de plus que $V^{\{0,0\}} $ soit compatible avec la structure rationnelle et $ V^{\{-1,+1\}}$ ne le soit pas, c'est-à-dire que l'on ait l'égalité 
$\dim_\Q (V^{\{0,0\}}\cap V)=\dim_\R V^{\{0,0\}}$ mais  $\dim_\Q (V^{\{-1,+1\}}\cap V)<\dim_\R V^{\{-1,+1\}}$.

Si $V$ était autodual on aurait en particulier une forme bilinéaire sur $V$   pour laquelle  $V^{\{0,0\}}$ et  $ V^{\{-1,+1\}}$ seraient l'orthogonal l'un de l'autre. Or comme une telle forme bilinéaire serait définie sur $\Q$, l'espace $V^{\{-1,+1\}}$ serait forcé à être également compatible avec la structure rationnelle.
    \end{rem}
  

Il se trouve que toutes les structures de Hodge provenant de la géométrie algébrique sont autoduales. En fait même plus, elles sont munies d'une forme bilinéaire \og aussi définie   que possible \fg, c'est l'objet de la définition de polarisation, rappelée ci-dessous. 
  \begin{defin}\label{def pol pair} (Polarisation - poids pair.)
  Une polarisation sur une structure de Hodge $V$ pure de poids $2n$ est une forme bilinéaire symétrique  $b$
  sur le $\Q$-espace vectoriel $V$ telle que :
  \begin{enumerate}
  \item  Les espaces $V^{\{p,q\}}$ sont orthogonaux deux à deux par rapport à $b$,
  \item L'adjointe par rapport à $b$ de la structure complexe sur $V^{\{p,q\}}$ est sa conjuguée,
  \item  La restriction de $b$ aux facteurs  $V^{\{n-a,n+a\}}$ est définie $(-1)^a$-positive\footnote{C'est-à-dire définie positive si $a$ est pair et définie négative si $a$ est impair.}.
  \end{enumerate}
  \end{defin}
  \begin{rem}\label{rem HR}
  \begin{enumerate}

  \item\label{choix Deligne}
     Les deux premières propriétés de la définition sont équivalentes à l'existence d'un  morphisme de structures de Hodge
  \[b:V\otimes V \longrightarrow \Q(-2n).\]
    
  Inversement, étant donné  un  tel $b$ 
il est nécessaire de  choisir une identification $\Q(-2n)\otimes_\Q \R\cong \R$ pour pouvoir exprimer la dernière propriété (de positivité).
 L'Exemple \ref{exemple HS}\eqref{model tate} fixe un tel choix.
  \item
  Dans le cas $V=H^{2n}(X)$ on déduit  qu'une polarisation est définie positive sur les classes algébriques, car elle l'est sur  $V^{\{n,n\}}$.
C'est   utile de connaître les signes sur les autres facteurs même pour la compréhension des classes algébriques, voir l'Exemple \ref{K3 fermat} et la Section \ref{positivite}.
  
 \item Une polarisation est automatiquement non dégénérée donc les structures de Hodge qui la possèdent sont autoduales. D'autre part il y a des structures de Hodge autoduales, même simples, qui n'admettent pas de polarisation, voir l'Exemple \ref{es dragos}. Autrement dit la polarisabilité est une notion plus forte que l'autodualité.

\end{enumerate}
    \end{rem}
    \begin{exemple}(Autodualité vs polarisation.)\label{es dragos}
    Construisons une structure de Hodge simple et autoduale qui n'admet pas de polarisation. Soit $A$ une variété abélienne complexe de dimension $4$ et très générale. Alors la partie primitive $V=\HW^{4,\prim}(A)$ est  une structure de Hodge simple,  de type $(0,4),(1,3)$ et $(2,2)$ et polarisable (par le Corollaire \ref{gianluca}).  Définissons une nouvelle structure de Hodge $W$  où le $\Q$-espace vectoriel est le même que $V$, ainsi que la décomposition, mais on inverse le rôle de la partie $(0,4)$ avec la partie $(1,3)$, c'est-à-dire
    \[W^{\{0,4\}}  =  V^{\{1,3\}},  \hspace{1cm}   W^{\{1,3\}}  =  V^{\{0,4\}}   \hspace{1cm} \textrm{et}   \hspace{1cm}  W^{\{2,2\}}  =  V^{\{2,2\}}. \]
    On prétend que $W$ satisfait aux propriétés requises. 
     Premièrement remarquons qu'une polarisation sur $V$ induit un accouplement $b$ sur $W$ qui rend $W$ autoduale. D'autre part ce $b$ ne peut pas être une polarisation car les signes de la Définition \ref{def pol pair} ne sont pas respectés. Il reste à montrer que $W$ n'admet pas de polarisation et pour cela il suffit de voir que $b$ est l'unique accouplement sur $W$ à scalaire près et donc  que $\End_{\HS}(W)=\Q \cdot \Id.$ Or, par construction, on a  $\End_{\HS}(W)=  \End_{\HS}(V)$. Pour comprendre les endomorphismes de $V$, on prend le point de vue tannakien. Le groupe tannakien associé à $\HW^1(A) $ est  $\GSp_8$ et la représentation sous-jacente à  $\HW^{4,\prim}(A) \subset \HW^1(A)^{\otimes 4} $ est géométriquement irréductible, donc $\End_{\HS}(V)=\Q \cdot \Id.$

        \end{exemple}
    Nous rappelons ci-dessous la définition de polarisation dans le cas de poids impair, elle est un peu moins agréable.
Même si on s'intéresse seulement aux classes algébriques les groupes de cohomologie de degré impair peuvent être utiles (par exemple la cohomologie d'une variété abélienne $A$ est contrôlée par son $H^1$) mais il est souvent suffisant de se rappeler uniquement que la définition de polarisation est stable par produit tensoriel (et donc une polarisation sur $H^1(A)$ en induira une sur $H^{n}(A)$ via $H^n(A) = \Lambda^n H^1(A)$).   
    \begin{defin}\label{def pol imp} (Polarisation - poids impair.)
  Une polarisation sur une structure de Hodge $V$ pure de poids $2n+1$ est une forme bilinéaire alternée  $b$
  sur le $\Q$-espace vectoriel $V$ telle que :
  \begin{enumerate}
  \item  Les espaces $V^{\{p,q\}}$ soient orthogonaux entre eux par rapport à $b$,
  \item L'adjointe par rapport à $b$ de la structure complexe sur $V^{\{p,q\}}$ est sa conjuguée,
  \item  La forme bilinéaire symétrique $b(\cdot , i\cdot)$   est définie $(-1)^a$-positive  sur les facteurs $V^{\{n-a,n+1+a  \}}$.
\end{enumerate}
 \end{defin}
     \begin{rem}
  \begin{enumerate}
  \item
La Remarque \ref{rem HR} s'applique également dans ce cas.
\item
On pourrait vouloir travailler avec $b(i\cdot , \cdot)$ à la place de $b(\cdot , i\cdot)$, dans ce cas les signes s'inverserait.
On trouve plus agréable  d'avoir de la positivité sur la \og partie centrale \fg{} $V^{\{n,n+1  \}}$, c'est analogue à ce qui se passe dans la Définition \ref{def pol pair} avec la partie centrale $V^{\{n,n  \}}$.
\end{enumerate}
    \end{rem}
  \begin{exemple}\label{esempio pol} (Polarisations issues de la géométrie.)
 Soit $X$ une variété algébrique complexe de dimension $d_X$. D'une part la dualité de Poincaré fournit une identification $H^n(X)=H^{2d_X-n}(X)^\vee(-d_X)$. D'autre part, le théorème de Lefschetz difficile donne un isomorphisme $H^n(X)\cong H^{2d_X-n}(X) (d_X-n),$ au moyen  du choix d'une section hyperplane $L$. En combinant les deux on obtient une forme bilinéaire non-dégénérée
   \[b_L : H^{n}(X)\otimes H^{n}(X) \longrightarrow \Q(-2n).\]
   Celle-ci est bien un morphisme de structure de Hodge, mais elle n'a pas les signes demandés par une polarisation. Pour les obtenir, il faut modifier les signes de certains facteurs de la décomposition de Lefschetz. 
   
   Par exemple écrivons la décomposition de Lefschetz en degré six
    \[H^6=H^{6,\prim}\oplus H^{4,\prim}(-1)\oplus  H^{2,\prim}(-2)\oplus  H^{0,\prim}(-3), \] elle est orthogonale par rapport à l'accouplement $b_L$ ci-dessus. La décomposition de Hodge de chacune de ces quatre structures de Hodge est encore orthogonale ; la signature de $b_L$ est la suivante    (où les cases vides sont pour les sous-espaces qui sont toujours réduits à zéro). 
\vspace{0.5cm}                                 
 
  \hspace{2cm}\begin{tabular}[b]{|c|c|c|c|c|}

       \hline 
     signes $b_L$  &  $H^{6,\prim} $ &  $H^{4,\prim}(-1) $ &   $H^{2,\prim}(-2)$ & $H^{0,\prim} (-3)$\\
   \hline
   $(3,3)$  &-  & +   &  - & +  \\

   \hline
  $ (2,4) $ &+ & -  &   +   &  \\
   
   \hline 
  $(1,5)$  &-   &  + &   &   \\
   \hline
  $ (0,6) $ &+ &   &      &   \\
  \hline
  \end{tabular}

  Pour obtenir une polarisation il faudra donc changer le signe sur les facteurs $H^{6,\prim}$ et  $H^{2,\prim}(-2).$
   \end{exemple}

    \begin{rem}\label{rem esempio pol}(Pourquoi la définition de polarisation ?)
   Considérons le changement de signe expliqué dans l'exemple ci-dessus.
     Il serait tentant, à première vue,   de changer encore de signe, cette fois-ci par rapport à la décomposition en $V^{\{p,q\}}$, de sorte à avoir une forme quadratique définie positive dans  la Définition \ref{def pol pair}. 
     Le problème est que cette deuxième décomposition n'est pas en général définie sur $\Q$ et donc ce changement de signes donnerait une forme bilinéaire qui n'est pas définie sur $\Q$. 
     
     De manière générale, pour les structures de Hodge provenant  de la géométrie, on ne peut pas espérer avoir une forme quadratique qui soit à la fois définie sur $\Q$, compatible avec la décomposition de Hodge et définie positive, voir l'Exemple \ref{exemple non positif}.
     Il faut alors imaginer la   Définition \ref{def pol pair} comme la meilleure approximation d'un produit scalaire qui puisse exister pour les structures de Hodge issues des variétés algébriques. D'ailleurs la proposition ci-dessous montre que la polarisation a toutes les conséquences que l'on aimerait déduire d'un produit scalaire. 
     \end{rem}
     
     \begin{prop}\label{HS semisimple}
Soient $V$ une structure de Hodge, $b$ une polarisation sur $V$ et $W\subset V$ une sous-structure de Hodge. Alors la restriction de $b$ à $W$   est une polarisation.
De plus l'orthogonal $W^\perp \subset V$ de $W$ par rapport à $b$ est une sous-structure de Hodge et on a l'égalité de structures de Hodge $V=W \oplus W^\perp$.
\end{prop}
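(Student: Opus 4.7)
Le plan est de v\'erifier les trois conditions de la D\'efinition \ref{def pol pair} (ou \ref{def pol imp}) pour $b|_W$, puis d'en d\'eduire la non-d\'eg\'en\'erescence et d'en extraire la d\'ecomposition orthogonale. Comme $W$ est une sous-structure de Hodge de $V$, sa forme r\'eelle se d\'ecompose en
\[W \otimes_{\Q} \R = \bigoplus_{\{p,q\}} W^{\{p,q\}}, \qquad W^{\{p,q\}} := (W \otimes_{\Q} \R) \cap V^{\{p,q\}},\]
et sur chaque facteur avec $p \neq q$ la structure complexe est la restriction de celle sur $V^{\{p,q\}}$. L'orthogonalit\'e des types (condition (1)) et la condition d'adjointe-conjugu\'ee (condition (2)) sont alors imm\'ediatement h\'erit\'ees de $V$. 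Pour (3), la restriction d'une forme quadratique $(-1)^a$-d\'efinie \`a un sous-espace reste $(-1)^a$-d\'efinie ; le m\^eme argument s'applique \`a la forme sym\'etrique $b(\cdot,i\cdot)$ dans le cas de poids impair. Ainsi $b|_W$ est bien une polarisation.

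La d\'efinitude des restrictions de $b$ \`a chaque $W^{\{n-a, n \pm a\}}$ entra\^ine la non-d\'eg\'en\'erescence de $b|_W$. Par cons\'equent, tout $w \in W \cap W^\perp$ v\'erifie $b(w,w')=0$ pour tout $w' \in W$, ce qui force $w=0$, d'o\`u $W \cap W^\perp = 0$.

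Il reste \`a voir que $W^\perp$ est une sous-structure de Hodge puis \`a en d\'eduire la d\'ecomposition. L'application $b : V \otimes V \to \Q(-2n)$ est un morphisme de structures de Hodge (Remarque \ref{rem HR}), donc plus g\'en\'eralement l'orthogonal d'une sous-structure de Hodge est encore une sous-structure de Hodge. Explicitement, l'orthogonalit\'e des $V^{\{p,q\}}$ donne
\[W^\perp \otimes_\Q \R = \bigoplus_{\{p,q\}} \bigl(W^{\{p,q\}}\bigr)^{\perp_{V^{\{p,q\}}}},\]
et chaque facteur est stable par la structure complexe de $V^{\{p,q\}}$ gr\^ace \`a la condition (2). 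En combinant avec l'identit\'e de dimensions $\dim_\Q W + \dim_\Q W^\perp = \dim_\Q V$ issue de la non-d\'eg\'en\'erescence de $b$ sur $V$, on obtient $V = W \oplus W^\perp$ comme structures de Hodge.

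La seule subtilit\'e consiste \`a ne pas confondre rationalit\'e et structure r\'eelle : la d\'ecomposition en types n'est d\'efinie que sur $\R$, tandis que $b$ et $W$ sont d\'efinis sur $\Q$ ; c'est la $\Q$-lin\'earit\'e du morphisme $b$ qui garantit que $W^\perp$ est automatiquement d\'efini sur $\Q$, et la condition (2) qui garantit la stabilit\'e par les structures complexes. Une fois ces points observ\'es, la preuve est essentiellement formelle.
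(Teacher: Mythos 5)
Your proof is correct and complete. The paper actually states Proposition~\ref{HS semisimple} without proof (it is a classical fact about polarized Hodge structures), so there is no argument in the text to compare against; your verification that conditions (1)--(3) of D\'efinitions~\ref{def pol pair} and~\ref{def pol imp} pass to $W^{\{p,q\}} = (W\otimes_\Q\R)\cap V^{\{p,q\}}$, followed by non-degeneracy on $W$ and the observation that $W^\perp$ is a sub-Hodge structure because $b$ is a morphism of Hodge structures, is exactly the standard argument. One small imprecision: in the sentence beginning ``La d\'efinitude des restrictions de $b$\ldots'' you should say that in odd weight it is $b(\cdot,i\cdot)$, not $b$ itself, which is definite on each piece (since $b$ is alternating there); the conclusion of non-degeneracy of $b|_W$ still follows because each $W^{\{p,q\}}$ is stable under $i$.
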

\begin{cor}\label{gianluca}
Soit $X$ une variété complexe projective et lisse. Alors tout sous-structure de Hodge de la cohomologie singulière de $X$ est polarisable.
\end{cor}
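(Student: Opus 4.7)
The plan is to reduce the corollary to Proposition \ref{HS semisimple} by exhibiting, for each integer $n$, an explicit polarization on the pure Hodge structure $\HW^n(X)$. Since $\HW^*(X) = \bigoplus_n \HW^n(X)$ is a direct sum of pure Hodge structures of distinct weights, any sub-Hodge structure $W \subset \HW^*(X)$ splits as the direct sum of its weight components $W \cap \HW^n(X)$, each of which is a pure sub-Hodge structure of $\HW^n(X)$. So it is enough to polarize each $\HW^n(X)$ and then invoke Proposition \ref{HS semisimple} to restrict to $W \cap \HW^n(X)$.

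To construct the polarization on $\HW^n(X)$, I would follow exactly the recipe of Example \ref{esempio pol}. Fix a hyperplane section with class $L$ and form the bilinear pairing
\[ b_L : \HW^n(X) \otimes \HW^n(X) \longrightarrow \Q(-n) \]
obtained by composing the hard Lefschetz isomorphism $L^{d_X - n} : \HW^n(X) \xrightarrow{\sim} \HW^{2 d_X - n}(X)(d_X - n)$ with Poincar� duality. This $b_L$ is a morphism of Hodge structures and is non-degenerate; moreover the Lefschetz decomposition
\[ \HW^n(X) = \bigoplus_{r \geq 0} L^r \, \HW^{n - 2r, \prim}(X) \]
is $b_L$-orthogonal, and on each primitive piece the Hodge decomposition is likewise orthogonal.

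The key step is to adjust signs on each primitive piece. The Hodge--Riemann bilinear relations prescribe the signs of $b_L$ (respectively of $b_L(\cdot, i \cdot)$ in the odd-weight case) on every $(p,q)$-component of every primitive summand; the table in Example \ref{esempio pol} is the case $n = 6$. Multiplying $b_L$ by an explicit sign $\varepsilon_r \in \{\pm 1\}$ on each $L^r \, \HW^{n-2r,\prim}(X)$ (the sign forced by Hodge--Riemann to make the central $(p,q) = ((n-2r)/2 + r, (n-2r)/2 + r)$ piece, or the appropriate central piece in odd weight, positive) produces a new bilinear form $b$. Since the $\varepsilon_r$ rescale rational direct summands by $\pm 1$, the form $b$ remains rational, remains a morphism of Hodge structures, and now its restriction to each $V^{\{p,q\}}$ has the sign required by Definition \ref{def pol pair} (for $n$ even) or Definition \ref{def pol imp} (for $n$ odd). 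Hence $b$ is a polarization on $\HW^n(X)$, and the corollary follows.

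The only real obstacle is the appeal to the Hodge--Riemann bilinear relations that fix the signs of $b_L$ on the primitive components; this is a classical input that the author has already illustrated explicitly in Example \ref{esempio pol}, and the general pattern of signs simply extends the table given there.
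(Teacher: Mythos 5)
Your proof is correct and is exactly the argument the paper intends: the corollary is placed immediately after Proposition \ref{HS semisimple} and Example \ref{esempio pol}, and the implied proof is to polarize each $\HW^n(X)$ by sign-adjusting the Lefschetz pairing on the primitive pieces (as in Example \ref{esempio pol}, which is the classical Hodge--Riemann construction), then restrict via Proposition \ref{HS semisimple}. Your preliminary reduction to a single weight $n$ is a harmless extra remark (it is automatic since morphisms of Hodge structures preserve weight), and the rest matches the paper's route.
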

     
      \begin{exemple}\label{exemple non positif} (Polarisations vs produits scalaires.)
      Construisons une structure de Hodge $V$ de poids zéro et d'origine géométrique telle que  $\Hom_{\HS}(V\otimes V, \Q(0))$ ne contient pas une forme quadratique définie positive.

Soit $E$ une courbe elliptique non $CM$ et considérons  $H^1(E)$. 
On a  une décomposition de structures de Hodge $H^1(E)\otimes H^1(E)^\vee =  \Q(0) \oplus V$, où $V$ a dimension $3$ et types $(0,0)$ et $(-1,1)$.

On prétend que $\Hom_{\HS}(V,V^\vee)= \Hom_{\HS}(V\otimes V, \Q(0))$ est un $\Q$-espace vectoriel de dimension $1$.  
Pour le montrer on prend le point de vue tannakien. On peut voir que le groupe tannakien associé   à $H^1(E)$ est $\GL_2$ et que $H^1(E)$ est la représentation standard. 
Le groupe $\GL_2$ agit sur $V$ et on a l'identification  $\Hom_{\HS}(V,V^\vee) = \Hom_{\GL_2}(V,V^\vee)$.
D'autre part, la théorie classique des représentations nous dit que $V$ est géométriquement irréductible et donc que $\Hom_{\GL_2}(V,V^\vee)$ est un $\Q$-espace vectoriel de dimension (au plus) $1$. 

D'autre part  l'espace $\Hom_{\HS}(V\otimes V, \Q(0))$ contient une polarisation, par le Corollaire \ref{gianluca}, et donc cet espace est formé uniquement de multiples d'une polarisation. En particulier $\Hom_{\HS}(V\otimes V, \Q(0))$ ne peut pas contenir une forme quadratique définie positive.

On peut aussi en déduire que $H^2(E\times E)$ n'admet pas une forme quadratique définie positive qui respecte la structure de Hodge car $V(-1)$ en est un facteur direct.
\end{exemple}

Le reste de la section insiste sur les différences entre théorie de Hodge et cohomologie $\ell$-adique, au regard notamment de la notion de polarisation et des propriétés d'autodualité.

    \begin{rem}\label{rem non autodual} (Polarisations en cohomologie $\ell$-adique ?)
La proposition ci-dessus dit en particulier que les structures de Hodge  issues de la géométrie algébrique forment une catégorie semi-simple et que chaque objet est autodual à un twist près. 
C'est une grande différence avec la cohomologie $\ell$-adique : la semi-simplicité est seulement conjecturale et l'autodualité est fausse en général, voir l'Exemple \ref{non autodual}.

La notion même de polarisation n'a pas d'analogue : on ne peut même pas formuler des propriétés de positivité analogues à celles de la Définition \ref{def pol pair} pour la simple raison que la notion de positif n'a pas de sens dans   $\Q_\ell$.

On peut construire des accouplements sur la cohomologie $\ell$-adique de la même façon qu'en théorie de Hodge, comme dans l'Exemple \ref{esempio pol}. On ne connait pas de formulation, même conjecturale, qui décrirait cette $\Q_\ell$-forme quadratique.  Par ailleurs les invariants d'une telle forme quadratique, comme son symbole  de Hilbert, ne contrôlent pas ceux des sous-formes quadratiques (contrairement à ce qui se passe avec la signature). Ceci suggère que  même si on trouvait une propriété analogue à la Définition \ref{def pol pair}  pour les groupes de cohomologies $H^n_\ell(X)$ elle pourrait ne pas être     valable pour les facteurs directs de $H^n_\ell(X)$.
\end{rem}

     \begin{exemple}\label{non autodual} (Non autodualité en cohomologie $\ell$-adique.)
Soient $k$ un corps  de type fini et $E$ une courbe elliptique définie sur   $k$ telle que $\End (E)\otimes \Q$ soit un corps quadratique imaginaire. Prenons un nombre premier $\ell$ différent de la caractéristique de $k$ et  tel que $\End (E)\otimes \Q_\ell =\Q_\ell  \oplus \Q_\ell.$ Alors l'action de $\End (E)\otimes \Q_\ell $ donne une décomposition de représentations galoisiennes
$H^1_\ell(E) =  V \oplus W.$
Le cup-produit induit une autodualité sur le  $H^1_\ell(E)$, elle réalise $W$ comme dual de $V$    par la positivité de Rosati.

D'autre part on prétend  que $V$ et $W$ ne sont pas isomorphes comme représentation de Galois, ce qui  impliquera en particulier que $V$ n'est pas autodual.
En effet s'ils étaient isomorphes on aurait $\End_{\Gal}H^1_\ell(E) = M_{2 \times 2}(\Q_\ell)$, or on a $\End_{\Gal}H^1_\ell(E)\cong\End(E)\otimes \Q_\ell$ comme prédit  par la conjecture de Tate, montrée dans ce context par Tate, Faltings et Zarhin.
\end{exemple}
\begin{rem}
L'exemple ci-dessus dépend du nombre premier $\ell$ choisi. On s'attend à ce que l'on ne puisse pas trouver une représentation d'origine géométrique,  \og indépendante de $\ell$ \fg{} et non autoduale. Cette idée est rendue précise par les motifs, voir
la Conjecture  \ref{conj autodual} et la remarque qui la suit.
\end{rem}

  \section{Conjectures standard et moins standard}\label{conjectures standard}
  Soient $k$ un corps de base et $\HW^*$ une cohomologie de Weil. Considérons le diagramme \eqref{diagramme motifs} qui représente des catégories de motifs : 
   \[
\xymatrix{
    \CHM(k)  \ar@{->>}[d]^{\pi} \ar[rd]^{R} &   \\
      \Mot(k)  \ar@{^{(}->}[r]^I \ar@{->>}[d]^{\pi'} & \GrVect. \\
    \NUM(k) & 
  }
    \]
    on les considère toujours à coefficients rationnels. Pour chaque variété projective et lisse $X$, de dimension $d_X$, on considère son motif $\mathfrak{h}(X)$, on utilisera ce même symbole dans les différentes catégories, on précisera la catégorie concernée si cela est important.
  
  Le but de cette section est de donner une collection de conjectures  qui permettent d'avoir une intuition sur les motifs. Ces  conjectures sont essentiellement classiques, hormis la Conjecture \ref{conj autodual} d'autodualité et la Conjecture \ref{Conj pos} de positivité qui sont nouvelles à notre connaissance. 
  
  La section est organisée en trois sous-sections. La première présente les conjectures principales de la théorie  (qui essentiellement entrainent les autres conjectures). La deuxième sous-section présente les conjectures dite standard de Grothendieck. La dernière porte sur la notion de dimension finie de Kimura.

  Avec les techniques actuelles ces conjectures sont hors de portée en toute généralité, mais il est possible en démontrer des cas particuliers, comme on le verra dans les sections successives.
  
  \subsection*{Conjectures principales}
  \begin{conj}\label{conj CK}(Chow--Künneth.) Il existe une décomposition  (non unique)
  \[\mathfrak{h}(X)=\bigoplus_{n=0}^{2d_X} \mathfrak{h}^n(X) \]
dans la catégorie $\CHM(k)$ telle que $R(\mathfrak{h}^n(X)) =\HW^n(X)$.
\end{conj}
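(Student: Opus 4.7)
Le plan serait de construire, pour chaque $n \in \{0,\ldots,2d_X\}$, une correspondance $\pi_n \in \CH^{d_X}(X \times X)$ dont la classe de cohomologie est le projecteur de K\"unneth sur $\HW^n(X)$, et telle que la famille $\{\pi_n\}$ soit form\'ee d'idempotents deux \`a deux orthogonaux dans l'anneau des correspondances modulo l'\'equivalence rationnelle. La d\'ecomposition cherch\'ee serait alors donn\'ee par $\mathfrak{h}^n(X) = (X,\pi_n,0)$, et la condition $R(\mathfrak{h}^n(X))=\HW^n(X)$ r\'esulte automatiquement du choix cohomologique des $\pi_n$.

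Je proc\'ederais en deux \'etapes. D'abord, \'etablir l'existence des projecteurs de K\"unneth modulo l'\'equivalence homologique, c'est-\`a-dire la conjecture standard de K\"unneth : on veut relever la projection sur $\HW^n(X)$ par une classe dans $\CH^{d_X}(X\times X)/\hom$. Ceci est connu pour plusieurs classes de vari\'et\'es : les courbes (par les projecteurs explicites $\pi_0 = \{\pt\}\times X$, $\pi_2=X\times\{\pt\}$ et $\pi_1=\Delta_X-\pi_0-\pi_2$), les surfaces par Murre, les vari\'et\'es ab\'eliennes par d\'ecomposition spectrale sous l'action de $n_A$ (dans l'esprit du Th\'eor\`eme \ref{thm intro}(4)) et toute vari\'et\'e projective et lisse sur un corps fini par Katz--Messing, gr\^ace \`a la puret\'e des valeurs propres de Frobenius. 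Ensuite, il faut relever ces projecteurs en idempotents dans $\CH^{d_X}(X\times X)$. L'outil standard est un lemme d'idempotents : si $e$ est un cycle dont la classe modulo $\hom$ est idempotente et si le noyau du foncteur de projection $\pi : \CHM(k) \to \Mot(k)$ est un id\'eal nilpotent sur l'anneau $\End(\mathfrak{h}(X))$, alors une modification polynomiale explicite de $e$ est un v\'eritable idempotent ; on it\`ere ensuite pour orthogonaliser la famille obtenue.

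L'obstacle principal refl\`ete la structure du diagramme \eqref{diagramme motifs} : d'une part l'algebricit\'e des projecteurs de K\"unneth (question de la classe (1) dans l'introduction), d'autre part le contr\^ole du noyau de $\pi$ (question de la classe (2)), accessible via la conjecture de nilpotence ou, plus fortement, la dimension finie au sens de Kimura--O'Sullivan pour $\mathfrak{h}(X)$. M\^eme lorsque les projecteurs homologiques sont connus -- par exemple sur $\bF_q$ par Katz--Messing -- le rel\`evement en \'equivalence rationnelle reste une question ouverte en g\'en\'eral. En revanche, lorsque $\mathfrak{h}(X)$ est de dimension finie (par exemple pour $X$ domin\'ee par un produit de courbes, cf.\ Th\'eor\`eme \ref{thm intro}(2)), les deux difficult\'es peuvent \^etre abord\'ees simultan\'ement, et la Conjecture \ref{conj CK} devient accessible.
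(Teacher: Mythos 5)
Cette Conjecture \ref{conj CK} est pr\'esent\'ee dans le m\'emoire comme une conjecture ouverte, sans d\'emonstration : il n'y a donc pas de preuve de l'auteur \`a laquelle comparer la v\^otre. Ce que vous proposez n'est pas une preuve mais une esquisse de strat\'egie, et cette esquisse est correcte et conforme \`a ce que le m\'emoire dit ailleurs de la question : d'abord l'\'etape homologique (conjecture standard de type K\"unneth, Conjecture \ref{kunneth}), puis le rel\`evement \`a l'\'equivalence rationnelle via la nilpotence du noyau ou la dimension finie de Kimura--O'Sullivan (Th\'eor\`eme \ref{thm kimura}), cf.\ les Remarques \ref{rem kunn abel} et \ref{rem kimura}. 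Deux pr\'ecisions. D'une part, pour les vari\'et\'es ab\'eliennes, le m\'emoire attribue la d\'ecomposition de Chow--K\"unneth \`a Deninger--Murre via la transform\'ee de Fourier (Remarque \ref{rem kunn abel}\eqref{kunn abel}), c'est-\`a-dire un rel\`evement \emph{explicite} plut\^ot que le rel\`evement abstrait par nilpotence que vous d\'ecrivez -- m\^eme si la dimension finie du motif $\mathfrak{h}(A)$ rend aussi l'approche abstraite viable aujourd'hui. D'autre part, vous soulignez \`a juste titre que, sur un corps fini, m\^eme apr\`es Katz--Messing, le rel\`evement des projecteurs de K\"unneth \`a l'\'equivalence rationnelle reste une question ouverte en g\'en\'eral, ce que le m\'emoire confirme.
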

\begin{rem}\label{rem kunneth} 
\begin{enumerate} 
\item  (Non unicité.) Le facteur $\mathfrak{h}^0(X)$ existe toujours. Pour le construire il suffit de considérer une application constante de $X$ vers $X$ : elle sera bien   l'identité sur le $\HW^0(X)$ et nulle sur les autres groupes. Le facteur ainsi défini dépend de l'image de cette application constante, ou plus précisément de sa classe modulo équivalence rationnelle. En particulier une telle décomposition ne peut pas être unique en général.
\item  (Autodualité.) On peut conjecturer l'existence d'une   décomposition de Chow--Künneth qui ait la propriété supplémentaire d'être  autoduale, c'est-à-dire que si on considère la dualité de Poincaré $\mathfrak{h}(X)^\vee= \mathfrak{h}(X)(d_X)$ alors le facteur $\mathfrak{h}^n(X)^\vee$ correspond à $\mathfrak{h}^{2d_X-n}(X)(d_X)$.
D'un point de vue de cycles algébriques cela veut dire que les projecteurs $p_n$ sont donnés par une collection de cycles dans $X\times X$ telle que $\sigma^*p_n=p_{2d-n}$, où $\sigma : X\times X  \rightarrow X\times X$ est l'inversion des deux facteurs.
Ce n'est pas automatique de construire une décomposition autoduale à partir d'une décomposition de Chow--Künneth : même si l'on pose   $p_{2d-n} = \sigma^*p_n$ on pourrait avoir que les projecteurs  $p_n$ et  $  \sigma^*p_n$ ne sont pas orthogonaux.
\end{enumerate}
\end{rem}
\begin{conj}\label{conj conserv} (Conservativité.) Tous les foncteurs du  diagramme \eqref{diagramme motifs} sont conservatifs, c'est--à-dire un morphisme entre motifs est en fait un isomorphisme si son image via  un de ces foncteurs du diagramme l'est.
 \end{conj}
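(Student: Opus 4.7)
Le diagramme comporte quatre foncteurs, mais comme $R = I\circ\pi$, il suffit de traiter $I$, $\pi'$ et $\pi$ s\'epar\'ement.

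Pour le foncteur $I : \Mot(k) \to \GrVect$, on observe qu'il est fid\`ele par construction (on a quotient\'e $\CHM(k)$ par le noyau de la r\'ealisation). Comme un foncteur additif, exact et fid\`ele entre cat\'egories ab\'eliennes est toujours conservatif, il suffit d'\'etablir que $\Mot(k)$ est ab\'elienne. On sait d\'ej\`a, par Jannsen, que $\NUM(k)$ l'est et m\^eme semi-simple ; le passage de $\NUM(k)$ \`a $\Mot(k)$ serait assur\'e par la conjecture standard $\hom = \num$, qui rendrait $\pi'$ simultan\'ement une \'equivalence (donc trivialement conservative).

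Pour le foncteur $\pi : \CHM(k) \to \Mot(k)$, la voie naturelle passe par la nilpotence : si $f : M \to N$ devient un isomorphisme modulo $\hom$ avec inverse $g$, alors $1-gf$ et $1-fg$ sont homologiquement triviaux ; il suffirait qu'ils soient nilpotents pour conclure que $gf$ et $fg$ sont unipotents, donc inversibles, et partant que $f$ est inversible dans $\CHM(k)$. Cette nilpotence est pr\'ecis\'ement ce que garantit la dimension finie au sens de Kimura--O'Sullivan, et un argument parall\`ele (conjecture de nilpotence \`a la Voevodsky pour l'\'equivalence num\'erique) traite $\pi'$ de la m\^eme fa\c{c}on. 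Puisque la dimension finie se transmet par sommes, produits tensoriels et facteurs directs, il suffirait de l'\'etablir sur une famille g\'en\'eratrice ; elle est connue pour les courbes et les vari\'et\'es ab\'eliennes, par Kimura et Shermenev.

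L'obstacle principal est ainsi double : on ne dispose en toute g\'en\'eralit\'e ni de la dimension finie de Kimura, ni de la conjecture standard $\hom = \num$. La strat\'egie raisonnable consisterait alors \`a se restreindre \`a une classe de motifs o\`u ces deux ingr\'edients sont accessibles, typiquement la sous-cat\'egorie tensorielle engendr\'ee par les vari\'et\'es ab\'eliennes et les courbes ; et plus sp\'ecifiquement, dans le cadre de la section \ref{section conserv}, les motifs de vari\'et\'es ab\'eliennes sur un corps fini, o\`u l'on peut esp\'erer une preuve compl\`ete de la conservativit\'e par cette approche.
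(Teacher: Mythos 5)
This statement is a \emph{conjecture} in the paper, so there is no proof to compare against; the paper only proves special cases of it in Section~\ref{section conserv}. Your outline correctly identifies the standard reduction $R=I\circ\pi$ and the two missing ingredients: $\hom=\num$ (to make $\Mot(k)$ semi-simple, hence $I$ and $\pi'$ conservative, via Jannsen) and Kimura finite dimensionality (to make the kernel of $\pi$ nilpotent). One small simplification: Kimura nilpotence applies to the kernel of $\pi'\circ\pi$, and since $\pi$ is full this already gives conservativity of $\pi'$ --- you don't need to invoke a separate Voevodsky-type nilpotence conjecture for numerical equivalence. As stated, your strategy is exactly the route the paper follows for $\CHM(\C)^{\ab}$ in the Proposition right after Theorem~\ref{thm kimura}: Kimura for $\pi$, the Lefschetz standard conjecture plus characteristic-zero positivity for $\hom=\num$, then Jannsen semi-simplicity for $I$.

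Where there is a genuine gap is in your last sentence, suggesting the same approach yields conservativity for abelian motives over a finite field. Over $\mathbb{F}_q$ the identity $\hom=\num$ is \emph{not} known for all $\ell$: Clozel's theorem (Theorem~\ref{thm clozel}) only gives it for infinitely many $\ell$, and the case of a given $\ell$ is open. One therefore cannot identify $\Mot(\mathbb{F}_q)^{\ab}$ with $\NUM(\mathbb{F}_q)^{\ab}$ and appeal to Jannsen. The paper's Theorem~\ref{thm conservatif} must take a different route: after applying Kimura to reduce to homological motives, it establishes autoduality for rank-one summands of even weight by means of the CM decomposition and a well-chosen $\lambda$-adic realization (Proposition~\ref{prop auto}), and then bootstraps to arbitrary rank via the $\Lambda^d$ trick and Proposition~\ref{prop conserv}. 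This self-duality argument is precisely what replaces the $\hom=\num$ step that your outline relies on, and it is the actual content of Section~\ref{section conserv}.
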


\begin{defin}\label{def poids motif}
On dit qu'un motif est de poids $n$ s'il est facteur direct d'un motif de la forme  $\mathfrak{h}^n(X)$.
\end{defin}
  
\begin{conj}\label{conj autodual} (Autodualité.)
Si $M$ est un motif de poids $n$ (toujours à coefficients rationnels) alors il existe un isomorphisme (non unique) 
\[M^\vee \cong M(n).\]
\end{conj}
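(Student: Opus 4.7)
Le plan serait de déduire cette conjecture des trois conjectures standard classiques de Grothendieck, en suivant l'analogie avec la théorie de Hodge développée dans la Section \ref{coho Weil} : pour une structure de Hodge polarisable $V$ de poids $n$, la polarisation fournit précisément un isomorphisme $V^\vee \cong V(n)$ qui, par la Proposition \ref{HS semisimple}, se restreint en un isomorphisme analogue sur toute sous-structure. Il s'agit donc de transposer ce mécanisme au cadre motivique.

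Soit $X$ projective lisse de dimension $d_X$ telle que $M$ soit facteur direct de $\mathfrak{h}^n(X)$. La première étape consisterait à établir l'autodualité pour $\mathfrak{h}^n(X)$ lui-même. En admettant la Conjecture \ref{conj CK} dans sa variante autoduale (Remarque \ref{rem kunneth}(2)), la dualité de Poincaré motivique $\mathfrak{h}(X)^\vee \cong \mathfrak{h}(X)(d_X)$ se scinderait en $\mathfrak{h}^n(X)^\vee \cong \mathfrak{h}^{2d_X-n}(X)(d_X)$. La conjecture standard de type Lefschetz, qui postule l'algébricité de l'inverse de l'opérateur de Lefschetz difficile, fournirait parallèlement un isomorphisme motivique $\mathfrak{h}^n(X) \cong \mathfrak{h}^{2d_X-n}(X)(d_X-n)$. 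En combinant les deux, on obtient $\mathfrak{h}^n(X)^\vee \cong \mathfrak{h}^n(X)(n)$, incarné par un analogue motivique $b_L : \mathfrak{h}^n(X) \otimes \mathfrak{h}^n(X) \to \one(-n)$ de la forme bilinéaire de l'Exemple \ref{esempio pol}, symétrique ou alternée selon la parité de $n$.

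La deuxième étape consisterait à descendre cet isomorphisme au facteur direct $M$. Pour cela, je modifierais $b_L$ par les changements de signe sur les composantes de Lefschetz détaillés dans l'Exemple \ref{esempio pol} afin d'obtenir une forme $b$ jouant motiviquement le rôle d'une polarisation. C'est ici qu'interviendrait la conjecture standard de type Hodge, qui garantit la positivité de $b$ sur les classes algébriques de chaque sous-facteur. Cette positivité fournirait l'analogue motivique de la Proposition \ref{HS semisimple} : l'orthogonal de $M$ par rapport à $b$ est un supplémentaire motivique, et la restriction $b|_M$ est donc non dégénérée, ce qui donne l'isomorphisme cherché $M^\vee \cong M(n)$.

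L'obstacle principal est clairement la conjecture standard de type Hodge, ouverte en toute généralité. Une stratégie alternative consisterait à travailler d'abord dans $\NUM(k)$, catégorie abélienne semi-simple par le théorème de Jannsen : la forme $b_L$ combinée au lemme de Krull--Schmidt y donnerait aisément une version numérique de l'énoncé, et la difficulté se transporterait alors sur la conjecture \og $\hom=\num$ \fg{} (ou, plus faiblement, sur la conservativité du foncteur $\Mot(k) \to \NUM(k)$) pour relever l'isomorphisme à l'équivalence homologique. Dans tous les cas, la Conjecture \ref{conj autodual} apparaît comme une conséquence naturelle, mais non triviale, des trois conjectures standard (Künneth, Lefschetz, Hodge) présentées dans cette section.
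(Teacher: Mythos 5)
Ce que vous proposez n'est pas une d\'emonstration mais une d\'erivation conditionnelle, ce qui est la seule chose raisonnable : l'\'enonc\'e en question est une \emph{conjecture}, et le texte n'en donne pas de preuve. Il se contente de remarquer, juste apr\`es l'\'enonc\'e, qu'elle \og{}suit par ailleurs de conjectures classiques de positivit\'e (notamment la Conjecture~\ref{CSTH}), de la m\^eme mani\`ere que l'autodualit\'e pour les structures de Hodge suit des propri\'et\'es de positivit\'e des polarisations\fg{}. Votre premi\`ere strat\'egie --- construire un accouplement motivique de type polarisation sur $\mathfrak{h}^n(X)$ via K\"unneth et Lefschetz (cf.\ Proposition~\ref{pol mot}), puis invoquer la Conjecture~\ref{CSTH} pour imiter la Proposition~\ref{HS semisimple} et descendre au facteur direct $M$ --- est exactement le m\'ecanisme que le texte a en t\^ete, et vous le d\'eveloppez correctement.

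En revanche, votre \og{}strat\'egie alternative\fg{} comporte une lacune r\'eelle. Dans $\NUM(k)$, la semi-simplicit\'e de Jannsen, la non-d\'eg\'en\'erescence de $b_L$ sur $\mathfrak{h}^n(X)$ et le lemme de Krull--Schmidt ne suffisent \emph{pas} \`a conclure que chaque facteur direct $M$ est autodual. La non-d\'eg\'en\'erescence de $b_L$ implique seulement que la liste des facteurs simples de $\mathfrak{h}^n(X)$, avec multiplicit\'es, est stable sous $S \mapsto S^\vee(-n)$ ; rien n'emp\^eche a priori qu'un facteur simple $S$ soit \'echang\'e avec un $S^\vee(-n)$ non isomorphe \`a lui, auquel cas $b_L$ restreint \`a la composante isotypique de $S$ est identiquement nul. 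C'est pr\'ecis\'ement le ph\'enom\`ene illustr\'e par l'Exemple~\ref{non autodual} au niveau des r\'ealisations $\ell$-adiques. Pour exclure ce cas il faut bien la positivit\'e de la Conjecture~\ref{CSTH} (appliqu\'ee \`a toutes les puissances de $X$, de fa\c{c}on \`a donner une \emph{polarisation} au sens tannakien sur la cat\'egorie $\NUM(k)$) ; on ne peut pas transf\'erer toute la difficult\'e sur $\hom=\num$ comme vous le sugg\'erez. La conjecture de type Hodge est donc indispensable dans les deux variantes de votre argument, pas seulement dans la premi\`ere.
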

\begin{rem} \begin{enumerate}
\item (Autodualité des motifs vs autodualité en cohomologie.) Les groupes de cohomologies $\HW^n(X)$ jouissent de ce type d'autodualité via la dualité de Poincaré et l'isomorphisme de Lefschetz difficile (qui dépend du choix d'une section hyperplane), mais en général les facteurs directs de $\HW^n(X)$ n'ont pas cette propriété d'autodualité, voir l'Exemple \ref{non autodual}.
Ceci n'est pas en contradiction avec la conjecture ci-dessus : les exemples construits ne sont pas la réalisation d'un motif à coefficients rationnels.

\item (Lien avec les conjectures classiques.) Cette conjecture est nouvelle à notre connaissance. Nous trouvons sa formulation naturelle et elle nous a guidé dans l'étude de la conjecture de conservativité, voir la Section \ref{section conserv}.  
Elle suit par ailleurs de conjectures classiques  de positivité   (notamment la Conjecture \ref{CSTH} que l'on verra plus loin), de la même manière que l'autodualité pour les structures de Hodge suit des propriétés de positivité des polarisations, voir la Proposition \ref{HS semisimple} et la remarque qui la suit.
\end{enumerate}
\end{rem}

 \begin{conj}\label{Conj pos} (Positivité.) Supposons que le corps de base $k$ soit de caractéristique $p$. Soient $M$ un motif homologique sur $k$ et 
 \[q: \Sym^2 M \rightarrow \mathbbm{1}\]
 un morphisme dans $\Mot(k)$. 
 Supposons que $q$   soit la réduction modulo $p$ d'une application
 $\tilde{q}: \Sym^2 \tilde{M} \rightarrow \mathbbm{1}$
 définie en caractéristique zéro.
 Définissons  $q_Z$ comme la restriction de $q$ à toutes les classes algébriques $Z(M)=\Hom(\mathbbm{1}, M)$ de $M$ et  $q_B$ comme  la réalisation singulière de $\tilde{q}$.
 
Supposons  que $q_B$  soit une polarisation. Alors $q_Z$ est définie positive.
 \end{conj}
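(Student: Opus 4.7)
\medskip
\noindent\textbf{Plan de preuve.} L'id�e est de remonter la question en caract�ristique z�ro via les comparaisons de la th�orie de Hodge $p$-adique, o� la positivit� de la polarisation $\tilde{q}_B$ fournit les in�galit�s de type Hodge--Riemann n�cessaires. Concr�tement, on choisit la lift $\tilde{M}$ au-dessus d'un anneau de valuation discr�te $R$ de caract�ristique mixte, de corps r�siduel contenant $k$ et de corps des fractions $K$ de caract�ristique z�ro, tel que $q$ soit la r�duction modulo $p$ de $\tilde{q}$. L'isomorphisme de Berthelot--Ogus fournit une identification canonique entre la r�alisation cristalline de $M$ (�tendue � $K$) et la r�alisation de de Rham de $\tilde{M}_K$, compatible avec les produits tensoriels. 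Une classe alg�brique $z\in Z(M)$ produit ainsi, via sa classe cristalline, une classe de de Rham $z_{\dR}\in R_{\dR}(\tilde{M}_K)$ ; apr�s le choix d'un plongement $K\hookrightarrow\bC$, la comparaison classique Betti--de Rham donne une image $z_{\bC}\in R_B(\tilde{M})\otimes_{\bQ}\bC$. Par compatibilit� des foncteurs de r�alisation avec les accouplements, la valeur $q_Z(z,z)\in\bQ$ co�ncide alors avec $\tilde{q}_B^{\bC}(z_{\bC},z_{\bC})$, o� $\tilde{q}_B^{\bC}$ d�signe l'extension $\bC$-bilin�aire de $\tilde{q}_B$.

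Puisque le motif $M$ est n�cessairement de poids $0$ (le but $\mathbbm{1}$ imposant cela) et que $\tilde{q}_B$ est une polarisation sur la structure de Hodge de poids $0$ sous-jacente � $R_B(\tilde{M})$, la D�finition \ref{def pol pair} fournit une d�composition $R_B(\tilde{M})\otimes\bR = \bigoplus_{a\geq 0} V^{\{-a,a\}}$ orthogonale pour $\tilde{q}_B$ et avec restriction d�finie $(-1)^a$-positive sur chaque facteur. La positivit� voulue $q_Z(z,z)>0$ d�coulerait donc imm�diatement de l'assertion que $z_{\bC}$ appartient au facteur central $V^{\{0,0\}}$.

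L'obstacle principal r�side pr�cis�ment dans ce contr�le hodgien de $z_{\bC}$. Par compatibilit� de l'application classe de cycle avec la filtration de Hodge, la classe $z_{\dR}$ appartient automatiquement � l'�tage $F^0$ de la filtration de Hodge de $R_{\dR}(\tilde{M}_K)$. Il reste � montrer qu'elle appartient �galement � $\bar{F}^0$ (ce qui la placerait dans la composante de type $(0,0)$ de la d�composition de Hodge complexe) et qu'elle provient de la partie r�elle via la comparaison Betti--de Rham, ce qui la placerait bien dans $V^{\{0,0\}}$. Il s'agit d'une forme affaiblie mais toujours profonde de la conjecture de Hodge variationnelle en caract�ristique $p$ : on demande que les classes alg�briques en fibre sp�ciale correspondent, via les p�riodes $p$-adiques, � de vraies classes hodgiennes (au moins r�elles) en fibre g�n�rique. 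Un tel contr�le semble hors de port�e en g�n�ral, mais peut �tre obtenu pour des motifs poss�dant suffisamment de structure suppl�mentaire (petit groupe de Mumford--Tate, dimension finie au sens de Kimura, multiplication complexe\ldots), ce qui sera exploit� dans la preuve du Th�or�me \ref{thm intro}(3) pour les vari�t�s ab�liennes de dimension quatre.
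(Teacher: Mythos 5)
Le premier point à noter est que l'énoncé en question est une \emph{conjecture} (Conjecture~\ref{Conj pos}) : le mémoire n'en propose pas de démonstration et écrit explicitement que l'on ne sait pas si elle découle des conjectures classiques. Le mémoire ne démontre qu'un cas particulier, le Théorème~\ref{thm positif} (rang $2$ et réduction supersingulière), et par une méthode radicalement différente de la vôtre : une réduction place par place à l'aide de la formule du produit des symboles de Hilbert (Proposition~\ref{prop hilb}), qui ramène la positivité à une unique relation au symbole en $p$, puis le calcul explicite de la matrice des périodes $p$-adiques de l'isomorphisme de Faltings pour vérifier cette relation.

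Votre stratégie comporte deux lacunes sérieuses. D'abord, l'affirmation que $z_{\dR}$ appartient automatiquement à $F^0$ est fausse pour une classe algébrique $z$ non relevable : d'après la conjecture variationnelle de Fontaine--Messing ($p$HC, rappelée dans la Remarque~\ref{CPGw}), et même son sens facile, une classe cristalline se relève \emph{si et seulement si} elle est dans le bon cran de la filtration de de Rham, et ce sont justement les classes non relevables qui rendent la conjecture intéressante. Ensuite, le point de chute de votre argument — montrer que $z_{\bC}$ appartient au facteur central $V^{\{0,0\}}$ — est structurellement impossible dans le cas même que traite le mémoire : pour $M=\mathfrak{h}^{2,\mathrm{tr}}(S)(1)$ d'une K3 à réduction supersingulière (Exemple~\ref{K3 fermat}), on a $R_B(\tilde{M})^{\{0,0\}}=0$ alors que $Z(M_p)$ engendre tout $R_{\cris}(M_p)$. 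Le mémoire démontre du reste que $q_B\otimes\Q_p\not\cong q_Z\otimes\Q_p$ lorsque $n$ est impair, ce qui interdit toute identification envoyant les classes algébriques mod~$p$ dans la $\Q$-structure de Betti, et a fortiori dans sa partie $(0,0)$ ; la positivité de $q_Z$ est donc un phénomène global sur $\Q$ (discriminant et symboles de Hilbert) et non la conséquence d'un contrôle hodgien ponctuel. Enfin, votre annonce que le Théorème~\ref{thm intro}(3) s'obtiendra par ce contrôle ne correspond pas à la preuve du mémoire, qui suit la route arithmétique décrite ci-dessus.
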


 \begin{rem}\begin{enumerate}
   \item (Sur la relevabilité.) L'hypothèse de relevabilité  à la caractéristique zéro est vérifiée dans des cas intéressants, par exemple les variétés abéliennes. Dans ce cas, les classes algébriques qui se relèvent à la caractéristique zéro vérifient automatiquement la conjecture, voir la Remarque   \ref{rem HR}.
Soulignons tout de même qu'il y a en général des classes algébriques qui ne sont pas relevables, même dans le cas des variétés abéliennes.
 \item (Lien avec les conjecture classique.)
 Cette conjecture est nouvelle et nous ne savons pas si elle peut se déduire de conjectures classiques. Nous trouvons sa formulation naturelle et elle nous a guidé dans l'étude de la conjecture standard de type Hodge, voir la Section \ref{positivite}.  
 \end{enumerate}
 \end{rem}
 
 \subsection*{Conjectures standard}
  \begin{conj}\label{kunneth}(Künneth.) Dans  la catégorie $\Mot(k)$ des motifs homologiques, il existe une décomposition  
  \[\mathfrak{h}(X)=\bigoplus_{n=0}^{2d_X} \mathfrak{h}^n(X) \]
 telle que la réalisation de $\mathfrak{h}^n(X)$ soit $\HW^n(X)$.
\end{conj}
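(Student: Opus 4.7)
The plan is to reduce the conjecture to the algebraicity of canonical cohomology classes. The K\"unneth formula and Poincar\'e duality provide a canonical decomposition $[\Delta_X]=\sum_{n=0}^{2d_X}\pi_n$ of the diagonal class in $\HW^{2d_X}(X\times X)$, where $\pi_n$ lies in the $(n,2d_X-n)$-K\"unneth component and corresponds, as a cohomological correspondence, to the projection $\HW^*(X)\twoheadrightarrow \HW^n(X)\hookrightarrow \HW^*(X)$. Since the endomorphism ring of $\mathfrak{h}(X)$ in $\Mot(k)$ is $\CH^{d_X}(X\times X)/\hom$, we are reduced to producing algebraic cycle classes $p_n$ whose realizations are the $\pi_n$.

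The point is that once algebraic lifts $p_n$ exist, all the relations needed in $\Mot(k)$ are automatic. The identities $p_i\circ p_j=\delta_{ij}p_i$ and $\sum_n p_n=[\Delta_X]$ live in $\CH^{d_X}(X\times X)/\hom$, and they hold after applying the realization $R$ since the $\pi_n$ are orthogonal idempotents in cohomology summing to $[\Delta_X]$; by the very definition of homological equivalence they then hold in $\CH^{d_X}(X\times X)/\hom$ itself. Setting $\mathfrak{h}^n(X):=(X,p_n)$ yields the required decomposition of $\mathfrak{h}(X)$ with the prescribed realization.

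The hard part is the algebraicity of each $\pi_n$, which is exactly Grothendieck's standard conjecture $C(X)$ of K\"unneth type. I would proceed case by case, exploiting whatever additional structure is available: for curves it is immediate; for surfaces one reads off $\pi_0,\pi_4$ and $\pi_1,\pi_3$ from a closed point and the Albanese variety, and obtains $\pi_2$ as the complement; for abelian varieties the projectors are due to Lieberman and Kleiman, built from the Poincar\'e line bundle together with the multiplication-by-$n$ endomorphisms, which separate the weights; for smooth projective varieties over a finite field $\bF_q$ it is the theorem of Katz--Messing, obtained by applying to the graph of Frobenius an interpolating polynomial whose roots separate the Frobenius eigenvalues on the different $\HW^n(X)$ (disjointness being Deligne's theorem on the Weil conjectures). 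The main obstacle is precisely the absence of any uniform construction valid for an arbitrary smooth projective $X$; producing one would require a genuinely new source of algebraic cycles and would bypass the usual approach of first constructing algebraic classes and then checking that their realizations are the $\pi_n$.
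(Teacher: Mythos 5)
This is not a theorem the paper proves — it is one of the standard conjectures of Grothendieck, stated here precisely because it is open in general, and the paper offers no proof. Your write-up correctly recognizes this: you reduce the statement to the algebraicity of the cohomological Künneth components $\pi_n$ of the diagonal, you observe (correctly) that once algebraic lifts $p_n\in\CH^{d_X}(X\times X)/\hom$ exist, the idempotence, orthogonality, and the relation $\sum_n p_n=[\Delta_X]$ are automatic because the realization functor on $\Mot(k)$ is faithful by construction, and you honestly conclude that no uniform construction of the $p_n$ is known. So what you have written is an accurate account of what the conjecture says and of the state of the art, not a proof; as such it cannot be wrong, but it should not be advertised as a proof either.

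One small point of comparison with the paper. Immediately after this conjecture the text gives a proposition that unifies the special cases you list: if $\mathfrak{h}(X)$ carries an endomorphism $f$ whose characteristic polynomials $p_n(f)$ on the various $H^n(X)$ are pairwise coprime, then the Künneth decomposition exists, with projectors lying in $\Q[f]$ — the proof is a Bezout identity, i.e.\ an elementary interpolation argument. Your "Katz--Messing" case for $X/\bF_q$ (take $f=$ Frobenius, coprimality by the Weil conjectures) and your abelian-variety case (take $f=$ multiplication by an integer) are both instances of that single proposition. This is a slightly more economical way to organize the known cases than treating them ad hoc, and it is the route the paper favors. None of this changes the verdict that the general case remains a conjecture.
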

 \begin{rem}\label{poincare motif} (Lien avec Chow--Künneth.)
   Cette conjecture est bien sûr une conséquence de la Conjecture \ref{conj CK}. 
   
 Remarquons qu'une  décomposition de Künneth est  automatiquement unique et autoduale par définition d'équivalence homologique : il s'agit de la graduation de $\GrVect$ et de  la dualité de Poincaré en cohomologie. C'est une différence avec la décomposition de Chow--Künneth, voir la Remarque \ref{rem kunneth}.
 \end{rem}

\begin{conj}\label{CSTL}(Lefschetz.) Soient $d_X$ la dimension de $X$ et $L$ une section hyperplane de $X$. Alors  pour tout $n\leq d_X$ il existe une correspondance $\gamma_n$ dans $X\times X$ dont la réalisation en degré $d_X + n$ induit un isomorphisme
\[R(\gamma_n) :  \HW^{d_X + n}(X)   \isocan \HW^{d_X - n}(X) (-n)\]
qui est l'inverse du cup produit  par $L^n$.
\end{conj}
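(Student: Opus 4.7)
L'approche naturelle est de reformuler la conjecture comme une question d'algébricité d'une certaine classe cohomologique sur $X \times X$, puis de tenter de se ramener soit à la conjecture de Hodge (en caractéristique zéro), soit à des situations où l'on peut construire les cycles explicitement.

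D'abord je réduirais la question par dualité de Poincaré et par la formule de Künneth : se donner une correspondance $\gamma_n$ sur $X \times X$ induisant une application $\HW^{d_X+n}(X) \rightarrow \HW^{d_X-n}(X)(-n)$ équivaut à se donner une classe de cohomologie dans $\HW^{2d_X}(X \times X)(d_X)$, c'est-à-dire une classe potentielle de codimension $d_X$. La condition d'être l'inverse du cup produit par $L^n$ détermine cette classe de manière unique : on peut l'écrire composante par composante dans la décomposition primitive de Lefschetz au moyen de la formule d'inversion du cup produit par $L^n$. Le problème se réduit donc à montrer que cette classe cohomologique explicite est algébrique.

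En caractéristique zéro, cette classe est automatiquement de type $(d_X, d_X)$, donc une classe de Hodge sur $X \times X$, et la Conjecture \ref{CSTL} pour $X$ devient une conséquence de la conjecture de Hodge pour $X \times X$ en degré médian. Pour $n = 1$ on se ramène alors au théorème de Lefschetz $(1,1)$ appliqué à $X \times X$, ce qui traite en particulier le cas des surfaces. Pour les variétés abéliennes, l'approche de Lieberman construit la correspondance à partir de la transformation de Fourier--Mukai associée au fibré de Poincaré. Pour les variétés dont le motif se décompose en morceaux connus (produits de courbes, certaines variétés de Shimura, intersections complètes), on procéderait morceau par morceau à l'aide des cas déjà établis.

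L'obstacle principal sera le cas général : en l'absence de la conjecture de Hodge, on ne dispose d'aucune méthode générale pour produire des cycles algébriques à partir de classes de cohomologie, et en caractéristique positive même cette voie n'est pas disponible. Comme le rappelle la Section \ref{pourquoi motifs}, les cycles sont plus mystérieux que la cohomologie, et le passage de l'une aux autres requiert soit des constructions géométriques explicites, soit un théorème de décomposition motivique ramenant à des cas connus, soit les conjectures de Hodge ou de Tate. C'est précisément cette difficulté qui laisse la conjecture ouverte en général et motive l'étude, dans la suite du mémoire, de classes spécifiques de variétés pour lesquelles des techniques dédiées s'appliquent.
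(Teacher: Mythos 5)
Ce n'est pas un théorème mais une conjecture, et le mémoire ne la démontre pas : il la présente comme l'une des conjectures standard de Grothendieck (Section~\ref{conjectures standard}) et étudie des cas particuliers dans les sections ultérieures, notamment pour les variétés abéliennes (Proposition~\ref{mot var abel}) et pour les variétés hyper-k\"ahler à fibration lagrangienne (Section~\ref{section lef}). Votre proposition reconna\^it correctement ce statut : vous r\'eduisez l'\'enonc\'e \`a l'alg\'ebricit\'e d'une classe de Hodge explicite sur $X\times X$ et concluez --- \`a juste titre --- que la conjecture reste ouverte en g\'en\'eral.

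Quelques remarques sur le contenu de votre discussion, qui est globalement correct et compatible avec le m\'emoire. Votre observation selon laquelle, en caract\'eristique z\'ero, la classe cherch\'ee est automatiquement de type $(d_X,d_X)$ et que la conjecture est donc une instance de la conjecture de Hodge est juste ; elle figure implicitement dans la Remarque~\ref{rem HR2} et explicitement dans la Section~\ref{section lef}, qui rappelle en outre le r\'esultat d'Andr\'e que cette conjecture est en fait le pilier principal de la conjecture de Hodge via le transport parall\`ele de classes alg\'ebriques. Votre liste de cas connus (surfaces via Lefschetz $(1,1)$, vari\'et\'es ab\'eliennes via Fourier, produits de courbes) est aussi en accord avec le texte. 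En revanche, l'approche du m\'emoire pour attaquer de nouveaux cas diff\`ere sensiblement de la d\'ecomposition primitive absolue que vous invoquez : la Section~\ref{section lef} travaille avec une fibration $f:X\to B$, le th\'eor\`eme de d\'ecomposition (versions relatives \eqref{BBDeta} et \eqref{BBDbeta}), le th\'eor\`eme du support de Ng\^o et un argument de Voisin \'echangeant les r\^oles de $\eta$ et $\beta$. Ce passage du relatif \`a l'absolu, et les subtilit\'es qu'il soul\`eve (non-bigradation des op\'erateurs en g\'en\'eral, Remarques~\ref{rem solide} et~\ref{rem final}), constituent pr\'ecis\'ement le contenu nouveau du travail en cours que le m\'emoire pr\'esente.
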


\begin{rem}  (Lien avec l'autodualité.) La Conjecture standard de type Leschetz \ref{CSTL} implique la Conjecture standard de type Künneth \ref{kunneth}, c'est un argument classique de Kleiman \cite{GKL}.

Inversement, si $X$ vérifie Künneth alors la conjecture de type Lefschetz est équivalente à l'autodualité $\mathfrak{h}^n(X)^{\vee}\cong \mathfrak{h}^n(X)(n)$ du motif homologique $\mathfrak{h}^n(X)$. Cette équivalence se déduit de la  Proposition \ref{prop conserv}. Elle montre en particulier que la conjecture de type Lefschetz ne dépend pas de la section hyperplane $L$ choisie et elle suit de la Conjecture d'autodualité \ref{conj autodual}.
 \end{rem}
 \begin{conj}\label{conj hom=num} ($\hom = \num$.)  
 Le foncteur $\pi '$ est  une équivalence.
\end{conj}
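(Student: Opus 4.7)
Le plan est de se ramener à la conjecture standard de type Hodge (Conjecture \ref{CSTH}, annoncée plus loin). D'abord on observe que le foncteur $\pi'$ est par construction plein et essentiellement surjectif : la catégorie $\NUM(k)$ a les mêmes objets que $\Mot(k)$, et ses $\Hom$ sont obtenus en quotientant ceux de $\Mot(k)$ par l'équivalence numérique. Il suffit donc de montrer que $\pi'$ est fidèle, ce qui équivaut à l'égalité classique $\hom = \num$ : pour toutes variétés projectives et lisses $X, Y$ sur $k$, un cycle dans $X \times Y$ numériquement trivial est déjà homologiquement trivial.

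La stratégie, due à Grothendieck, consiste à exploiter les polarisations. Par le théorème de Lefschetz difficile et la décomposition de Lefschetz, tout motif homologique $M$ est facteur direct d'un motif $\mathfrak{h}(X)$ muni, après les changements de signes habituels (voir l'Exemple \ref{esempio pol}), d'un accouplement $q : \Sym^2 M \to \mathbbm{1}(-n)$. Supposons que $q$ satisfasse la conjecture standard de type Hodge, c'est-à-dire induise une forme quadratique définie positive sur les classes algébriques $Z(M) = \Hom_{\Mot(k)}(\mathbbm{1}, M(n))$. Soit alors $f \in \End_{\Mot(k)}(M)$ un endomorphisme numériquement trivial. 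Via $q$, l'endomorphisme $f$ correspond à une classe algébrique dans $M \otimes M^\vee$, et la condition $\pi'(f)=0$ entraîne $q(f,f)=0$. La positivité définie force $f = 0$, ce qui établit la fidélité de $\pi'$ sur $M$.

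Le point difficile est donc clairement la conjecture standard de type Hodge, qui reste largement ouverte. En caractéristique nulle elle est connue pour la cohomologie de Betti grâce aux inégalités de Hodge--Riemann (Corollaire \ref{gianluca}), ce qui entraîne $\hom = \num$ sur $\bC$ et, par un argument de spécialisation, sur tout corps de caractéristique nulle. En caractéristique positive, elle a été établie dans quelques cas (notamment pour les variétés abéliennes de dimension au plus quatre sur un corps fini \cite{Anc21}, voir la Section \ref{positivite}), ce qui livre les cas correspondants de la présente conjecture ; mais le cas général reste hors de portée des techniques actuelles.
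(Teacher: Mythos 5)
The statement you were asked to prove is stated in the paper as a \emph{conjecture} (Conjecture~\ref{conj hom=num}), not a theorem, and the paper does not prove it: it is a major open problem. You have recognized this correctly, and what you have written is not a proof but a reduction to other open conjectures together with a survey of known cases. That is the right thing to do here, and it is consistent with the paper's own discussion around Proposition~\ref{pol mot}, Conjecture~\ref{CSTH} and Remarque~\ref{rem HR2}.

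A few remarks on the reduction you sketch. The observation that $\pi'$ is full and essentially surjective by construction, so that the conjecture amounts to faithfulness, i.e.\ to the classical statement $\hom=\num$, is correct. The classical route (Kleiman) is indeed via the Lefschetz and Hodge standard conjectures, and the paper sets this up in Proposition~\ref{pol mot}. However, your sketch of the implication has two loose points. First, your pairing $q$ already presupposes the Lefschetz standard conjecture for $X$ (this is the hypothesis of Proposition~\ref{pol mot}); you should make this explicit, since without it $q$ is not a morphism of motives. Second, and more substantively: the paper's Conjecture~\ref{CSTH} asserts positivity of the form $q_Z$ on $\mathcal{Z}^n(X)_{/\num}$, whereas your argument needs positivity on $\Hom_{\Mot(k)}(\mathbbm{1},M(n))=\mathcal{Z}^n(X)_{/\hom}$ in order to force the radical to vanish. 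Under Lefschetz, Proposition~\ref{pol mot} only tells you that the radical of $q_{Z,\hom}$ consists of the numerically trivial classes; positivity of the induced form on the quotient modulo~$\num$ does not, by itself, make the radical zero. One needs the a priori stronger formulation modulo~$\hom$ (or the full trace-form argument on endomorphism algebras à la Kleiman) to close the loop. The two formulations of course coincide once $\hom=\num$ is known, but for the implication you want, you should state the version that is actually used. Finally, the passage from $q(f,f)=0$ to $f=0$ should be set up as positivity of the trace form $\mathrm{tr}(f\circ f^{\dagger})$ on $\End_{\Mot(k)}(M)$, where $\dagger$ is the Rosati-type adjoint induced by $q$; this requires the Lefschetz and Hodge standard conjectures for $X\times X$, not just for $X$.

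In short: no genuine error, and the high-level plan matches the paper's intent, but the reduction as written conflates the $\hom$- and $\num$-formulations of the Hodge standard conjecture and should be made precise along the lines above. The honest conclusion, as you say, is that the conjecture remains open in general.
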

La quatrième et dernière des conjectures standard est celle de type Hodge. Pour la formuler il est nécessaire d'introduire la proposition suivante.
\begin{prop}\label{pol mot} (cf. \cite[\S 3]{Anc21}) Supposons que $X$ vérifie la conjecture standard de type Lefschetz (Conjecture \ref{CSTL}). Choisissons une section hyperplane $L$. 

Alors le motif homologique $\mathfrak{h}^n(X)$ admet une décomposition en parties primitives et il est possible de construire un accouplement
\[q_{X,n,L}: \mathfrak{h}^n(X) \otimes \mathfrak{h}^n(X) \rightarrow  \Q(-n)\]
de façon analogue à la construction d'une polarisation sur la cohomologie singulière d'une variété algébrique complexe\footnote{En particulier, si $k=\C$ et $\HW^*$ est la cohomologie singulière, $R(q_{X,n,L})$ est la polarisation classique induite par $L$.}, voir l'Exemple  \ref{esempio pol}.

De plus, si on restreint l'accouplement 
\[q_{X,2n,L}(2n): \mathfrak{h}^{2n}(X)(n) \otimes \mathfrak{h}^{2n}(X)(n) \rightarrow  \Q\]
aux classes algébriques $\mathcal{Z}^n(X)_{/ \hom}= \Hom_{\Mot(k)}(\mathbbm{1}, \mathfrak{h}^{2n}(X)(n))$ on obtient une $\Q$-forme quadratique
\[q_{Z, \hom}: \mathcal{Z}^n(X)_{/ \hom} \otimes \mathcal{Z}^n(X)_{/ \hom} \rightarrow \Q \]
dont le noyau est formé exactement par les cycles numériquement triviaux\footnote{Autrement dit, la conjecture $\hom = \num$ pour $X$ est équivalente à dire que $q_{Z, \hom}$ est non dégénérée.}. En particulier $q_{Z, \hom}$ induit une  forme quadratique   sur les cycles modulo équivalence numérique
\[q_Z : \mathcal{Z}^n(X)_{/ \num} \otimes \mathcal{Z}^n(X)_{/ \num} \rightarrow \Q \]
qui est non dégénérée.
\end{prop}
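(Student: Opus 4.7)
Le plan est de transposer intégralement la construction de l'Exemple \ref{esempio pol} au niveau motivique, ce qui est précisément permis par la Conjecture \ref{CSTL} : elle garantit que l'inverse cohomologique du cup-produit par $L^{\bullet}$ est réalisé par une correspondance algébrique, et donc que le formalisme de la décomposition primitive, de l'isomorphisme de Lefschetz difficile et du changement de signes se relève dans $\Mot(k)$. Concrètement, j'utiliserais d'abord les correspondances $\gamma_n$ fournies par la Conjecture \ref{CSTL} pour construire, via l'argument classique de Kleiman, des projecteurs motiviques orthogonaux produisant la décomposition de Lefschetz
\[\mathfrak{h}^n(X) = \bigoplus_{i \geq 0} \mathfrak{h}^{n-2i,\prim}(X)(-i)\]
dans $\Mot(k)$.

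Je composerais ensuite la dualité de Poincaré motivique $\mathfrak{h}^n(X) \otimes \mathfrak{h}^{2d_X - n}(X) \to \Q(-d_X)$ avec l'isomorphisme motivique de Lefschetz difficile $\mathfrak{h}^n(X) \xrightarrow{\sim} \mathfrak{h}^{2d_X - n}(X)(d_X - n)$ pour obtenir un accouplement $b_L : \mathfrak{h}^n(X) \otimes \mathfrak{h}^n(X) \to \Q(-n)$. Je modifierais alors les signes de la restriction de $b_L$ à chaque composante primitive, selon la règle énoncée dans l'Exemple \ref{esempio pol} ; ceci définit $q_{X,n,L}$. Comme toutes ces opérations ont lieu dans $\Mot(k)$, par fonctorialité de la réalisation, lorsque $k = \C$ et $\HW^*$ est la cohomologie singulière, $R(q_{X,n,L})$ coïncide avec la polarisation classique associée à $L$.

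Pour le second énoncé, la décomposition de Lefschetz étant orthogonale aussi bien pour $b_L$ que pour $q_{X,2n,L}$, changer les signes composante par composante ne modifie pas le noyau de l'accouplement ; il suffit donc d'étudier celui de $b_L$ restreint à $\mathcal{Z}^n(X)_{/\hom}$. Or l'isomorphisme motivique de Lefschetz induit une bijection $\varphi : \mathcal{Z}^n(X)_{/\hom} \xrightarrow{\sim} \mathcal{Z}^{d_X - n}(X)_{/\hom}$ (son inverse étant algébrique par la Conjecture \ref{CSTL}), et modulo $\varphi$ l'accouplement $b_L$ s'identifie à l'accouplement d'intersection de Poincaré, dont le noyau est exactement, par définition de l'équivalence numérique, l'ensemble des cycles numériquement triviaux. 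Le point le plus délicat de cette démarche est la vérification que la décomposition primitive motivique de la première étape est effectivement orthogonale pour $b_L$ au niveau motivique et pas seulement cohomologique : ceci se ramène à des identités entre correspondances algébriques qui miment les relations bilinéaires de Hodge--Riemann, et dont la validité repose de nouveau sur la Conjecture \ref{CSTL}.
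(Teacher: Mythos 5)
Votre démonstration suit essentiellement la même démarche que celle implicite dans l'énoncé (transposition motivique de l'Exemple \ref{esempio pol}, décomposition primitive via Kleiman, combinaison Poincaré/Lefschetz pour $b_L$, changement de signes, puis identification du noyau à l'aide de la bijectivité de l'opérateur de Lefschetz sur les classes algébriques) : c'est correct et c'est l'approche attendue, conforme à \cite[\S 3]{Anc21}.

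Une seule réserve, sur votre dernier paragraphe : l'orthogonalité de la décomposition primitive pour $b_L$ « au niveau motivique » n'est pas un point délicat requérant des identités supplémentaires entre correspondances. Dans $\Mot(k)$, deux morphismes coïncident par définition dès que leurs réalisations cohomologiques coïncident ; l'orthogonalité cohomologique, qui est une identité formelle (issue de $L^{r+1}\HW^{n-r,\prim}=0$, rien à voir avec Hodge--Riemann, qui concerne les signes et non l'orthogonalité), implique donc automatiquement l'orthogonalité motivique. Le seul endroit où la Conjecture \ref{CSTL} intervient de façon non formelle est l'existence même des projecteurs motiviques et de l'inverse algébrique de $L^{d_X-2n}$ (ce dernier servant à conclure, comme vous le faites, que $L^{d_X-2n}\alpha$ numériquement trivial équivaut à $\alpha$ numériquement trivial). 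Ce point délicat que vous signalez deviendrait réel si l'on voulait la décomposition dans $\CHM(k)$, mais la proposition ne porte que sur $\Mot(k)$.
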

\begin{conj}\label{CSTH} (Conjecture standard de type Hodge.) La forme quadratique 
\[q_Z : \mathcal{Z}^n(X)_{/ \num} \otimes \mathcal{Z}^n(X)_{/ \num} \rightarrow \Q \]
introduite ci-dessus est définie positive.
 \end{conj}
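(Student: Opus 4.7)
L'id�e est de se ramener aux composantes primitives via la d�composition de Lefschetz, de traiter le cas complexe par les relations bilin�aires de Hodge--Riemann, et de tenter un transfert en caract�ristique positive par sp�cialisation. En premier lieu, le motif $\mathfrak{h}^{2n}(X)$ se d�compose en pi�ces primitives $\mathfrak{h}^{2n-2k,\prim}(X)(-k)$ et l'accouplement $q_{X,2n,L}$ est orthogonal par rapport � cette d�composition. Cela induit une d�composition orthogonale correspondante sur les classes alg�briques et ram�ne donc la positivit� de $q_Z$ � celle de sa restriction � chaque morceau primitif.

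Dans le cas o� $k=\C$ et o� $\HW$ est la cohomologie singuli�re, la r�alisation $R(q_{X,2n,L})$ co�ncide par construction avec la polarisation classique d�crite dans l'Exemple \ref{esempio pol}. Par la D�finition \ref{def pol pair}, cette polarisation est d�finie positive sur le sous-espace $V^{\{n,n\}}$ de chaque composante primitive ; puisque les classes alg�briques sont de type $(n,n)$ et vivent pr�cis�ment dans $V^{\{n,n\}}$, on en d�duit la conjecture sur $\C$. Pour un corps de base $k$ g�n�ral, la strat�gie naturelle est d'�taler $X$ et $L$ en une famille lisse au-dessus d'une base de type fini sur $\Spec \Z$ : si une classe alg�brique $\alpha$ sur une fibre se rel�ve en caract�ristique nulle, la positivit� de $q_Z(\alpha,\alpha)$ se d�duit du cas complexe par invariance de $q_Z$ le long du morphisme de sp�cialisation.

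L'obstacle principal, et la partie la plus profonde, est la pr�sence en caract�ristique positive de classes alg�briques non-relevables : le rang du groupe de N�ron--Severi (et de ses analogues en codimension sup�rieure) peut strictement augmenter par r�duction modulo $p$, cr�ant ainsi des \og cycles suppl�mentaires \fg{} sur lesquels la th�orie de Hodge complexe ne fournit aucune information directe concernant le signe de $q_Z$. Pour ces classes, la seule piste semble �tre celle esquiss�e par la Conjecture \ref{Conj pos} : d�duire la positivit� de $q_Z$ de la polarisabilit� en caract�ristique nulle en exploitant la rigidit� de la signature d'une forme quadratique $\Q$-lin�aire dans une famille. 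Un tel contr�le n�cessite une compr�hension motivique fine de la comparaison entre classes alg�briques sur les fibres g�n�rique et sp�ciale, et ne semble accessible que dans des cas o� l'on dispose de contraintes tannakiennes suppl�mentaires, typiquement pour les vari�t�s ab�liennes, comme illustr� par le Th�or�me \ref{thm intro}(3) en dimension quatre.
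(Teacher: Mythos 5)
The statement you were asked to prove is a conjecture, not a theorem: the paper does not prove it, and neither does anyone else. Your text is thus (correctly) a strategy sketch plus a survey of what is known, and much of it matches what the paper says: the characteristic-zero case follows from the Hodge--Riemann relations (Remarque \ref{rem HR2}), classes that lift to characteristic zero are handled by the same argument, and the genuine difficulty is the non-liftable classes, where the only real progress is through the mechanism of Conjecture \ref{Conj pos} and the abelian-fourfold case of the Théorème \ref{thm intro}(3).

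There is, however, one concrete error in the mechanism you suggest. You propose to control the non-liftable classes by \og exploitant la rigidit\'e de la signature d'une forme quadratique $\Q$-lin\'eaire dans une famille\fg. The signature is precisely \emph{not} rigid in this situation: the paper's Exemple \ref{K3 fermat} exhibits a supersingular K3 where $q_B$ (the Betti form in characteristic zero) is positive definite while $q_Z$ (the form on algebraic classes modulo $p$) is negative definite, so the two $\Q$-forms are genuinely different. The actual lever in the paper's Section \ref{positivite} is weaker but works: the two forms agree after $\otimes\,\Q_\ell$ for every $\ell\neq p$ (Artin comparison plus smooth proper base change), so by the product formula for Hilbert symbols the behaviour at $\infty$ is pinned down by the behaviour at $p$ (Proposition \ref{prop hilb}), which is then computed via $p$-adic Hodge theory (Faltings comparison, explicit period matrices in $B_{\cris}$). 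The reason the rank-two hypothesis of Théorème \ref{thm positif} is essential is exactly because in rank two the Hilbert symbol at $\infty$ determines the signature; for higher rank it does not, and the paper explains that this is the open obstruction (the map $H^1(\Q,O)\to\bigoplus_{\nu\neq\infty}H^1(\Q_\nu,O)$ is not injective). So the family-rigidity heuristic should be replaced by the local-global argument: nothing is rigid pointwise, but the deviations are constrained arithmetically.
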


 \begin{rem}  (Lien avec la conjecture de positivité.)
 La conjecture standard de type Hodge (Conjecture \ref{CSTH}) ne demande pas la relevabilité de $X$ à la caractéristique zéro et en ce sens elle est plus générale que la Conjecture  de positivité \ref{Conj pos}.
 D'autre part pour les variétés relevables  la Conjecture \ref{Conj pos} est plus générale que la conjecture standard de type Hodge, car elle s'applique à des polarisations abstraites qui ne seraient pas forcément celles provenant d'une section hyperplane.
   \end{rem}
   
    \begin{rem}\label{rem HR2} (Positivité en caractéristique zéro.)
En caractéristique zéro, la forme quadratique introduite ci-dessus $q_Z : \mathcal{Z}^n(X)_{/ \hom} \otimes \mathcal{Z}^n(X)_{/ \hom} \rightarrow \Q$ est définie positive : c'est une conséquence des propriétés de positivité d'une polarisation (dites relations de Hodge--Riemann), voir Remarque \ref{rem HR}. En particulier, en caractéristique zéro, la conjecture standard de type Lefschetz implique les autres conjectures standard.
\end{rem}

Une autre différence entre la caractéristique zéro et la caractéristique positive se trouve dans la conjecture suivante.
\begin{conj}\label{conj nonso}
  Considérons l'application classe de cycle $\ell$-adique $\cl_X : \CH(X) \rightarrow \HW_\ell(X).$
  Alors le $\Q$-espace vectoriel $\Im \cl_X $ est de dimension finie. Plus précisément l'application canonique $\Im \cl_X \otimes_\Q \Q_\ell \rightarrow  \HW_\ell(X)$ est injective.
\end{conj}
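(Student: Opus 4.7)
Remarquons d'abord que la deuxi\`eme assertion entra\^\i ne la premi\`ere : si l'application $\Im \cl_X \otimes_\Q \Q_\ell \to \HW_\ell(X)$ est injective, alors $\dim_\Q \Im \cl_X \leq \dim_{\Q_\ell} \HW_\ell(X) < \infty$. La substance de la conjecture est donc l'injectivit\'e, \`a savoir que des classes $\Q$-lin\'eairement ind\'ependantes de $\Im \cl_X$ restent $\Q_\ell$-lin\'eairement ind\'ependantes dans $\HW_\ell(X)$. Soulignons que la conjecture $\hom = \num$ seule n'y suffit pas : l'injectivit\'e d'une application d'un $\Q$-espace vectoriel dans un $\Q_\ell$-espace vectoriel n'entra\^\i ne pas l'\'egalit\'e du $\Q_\ell$-rang de l'image avec le $\Q$-rang de la source.

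En caract\'eristique nulle, la conjecture est imm\'ediate par le th\'eor\`eme de comparaison $\HW_\ell(X) = \HW_B(X,\Q) \otimes_\Q \Q_\ell$ : $\Im \cl_X$ est contenu dans le $\Q$-sous-espace des classes de Hodge, et l'extension fid\`element plate de $\Q$ \`a $\Q_\ell$ pr\'eserve les injections. Toute la substance r\'eside donc en caract\'eristique $p > 0$, o\`u aucune $\Q$-structure canonique sur $\HW_\ell(X)$ n'est a priori disponible.

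Le plan que je proposerais en caract\'eristique positive comporte deux volets. Pour les vari\'et\'es qui se rel\`event \`a la caract\'eristique nulle (par exemple les vari\'et\'es ab\'eliennes via la th\'eorie de Serre--Tate), le th\'eor\`eme de changement de base lisse fournit un isomorphisme $\HW_\ell(X) \cong \HW_\ell(\cX_\eta)$ sur un mod\`ele projectif et lisse $\cX$ sur $W(k)$, et tout cycle relevable de la fibre sp\'eciale h\'erite de la $\Q$-structure de Betti via sa relev\'ee. Pour les vari\'et\'es sur un corps fini $\F_q$, l'id\'ee serait d'exploiter que $\Im \cl_X \subset \HW_\ell^{2n}(X)(n)^{\Frob_q = 1}$ : l'ind\'ependance de $\ell$ du polyn\^ome caract\'eristique de Frobenius (Weil--Deligne) dote ce sous-espace d'une $\overline{\Q}$-structure canonique, dans laquelle on esp\`ererait \og enfermer \fg{} $\Im \cl_X$ comme $\Q$-structure ind\'ependante de $\ell$.

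L'obstacle principal est double. D'une part, tous les cycles en caract\'eristique positive ne se rel\`event pas (aspect g\'eom\'etrique), ce qui bloque le premier volet hors du cas favorable. D'autre part, m\^eme sur un corps fini, isoler dans le sous-espace Frobenius-invariant une $\Q$-structure canonique contenant $\Im \cl_X$ et ind\'ependante de $\ell$ est \'etroitement li\'e \`a la conjecture de Tate (aspect arithm\'etique). En d\'efinitive, la Conjecture \ref{conj nonso} appara\^\i trait comme une cons\'equence plausible de la conjecture standard $\hom = \num$ (Conjecture \ref{conj hom=num}), combin\'ee \`a la finitude classique de $\CH(X)/\num$ et aux conjectures d'ind\'ependance de $\ell$, sans qu'aucune m\'ethode actuelle ne permette de la prouver inconditionnellement.
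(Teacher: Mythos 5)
Your first two observations agree with the paper's remark: the second assertion does imply the first, and the characteristic-zero case is handled by the comparison isomorphism with singular cohomology. The gap is in your claim that $\hom = \num$ alone does not suffice. The remark immediately following the conjecture in the paper asserts precisely the opposite: the conjecture \emph{is} a consequence of $\hom = \num$, because numerical equivalence commutes with extension of scalars.

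Your objection --- that an injection of a $\Q$-vector space into a $\Q_\ell$-vector space need not preserve rank after applying $\otimes_\Q \Q_\ell$ --- is correct for an abstract linear map, but it ignores the extra structure present here: the numerical intersection pairing is $\Q$-valued. Under $\hom = \num$, if $\cl(z_1), \ldots, \cl(z_k) \in \HW_\ell(X)$ are $\Q$-linearly independent, then $z_1, \ldots, z_k$ are $\Q$-linearly independent modulo numerical equivalence, so one can find cycles $w_1, \ldots, w_k$ with $\det\bigl(\deg(z_i \cdot w_j)\bigr)$ a nonzero rational number, hence nonzero in $\Q_\ell$. Any relation $\sum_i a_i \cl(z_i) = 0$ with $a_i \in \Q_\ell$, paired against each $\cl(w_j)$ via cup-product (which is compatible with intersection numbers), yields $\sum_i a_i \deg(z_i \cdot w_j) = 0$ for every $j$, forcing all $a_i = 0$. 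In other words, $(\CH(X)/\num) \otimes_\Q \Q_\ell$ injects into $\CH(X)_{\Q_\ell}/\num$ because the pairing that cuts out the radical is defined over $\Q$, and then $\hom = \num$ identifies $\Im \cl_X$ with $\CH(X)/\num$. So no lifting to characteristic zero, no independence of $\ell$, and no Tate conjecture are needed; the two-pronged plan you sketch for characteristic $p$ replaces a short linear-algebra argument with a much harder and ultimately unnecessary route.
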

\begin{rem}

Cette conjecture est une conséquence de la conjecture $\hom=\num$ car l'équivalence numérique commute à l'extension des scalaires. En caractéristique zéro elle est connue : on utilise les théorèmes de comparaison pour se reporter à la cohomologie singulière.
 \end{rem}

\subsection*{Motifs de dimension finie}
\begin{conj}\label{conj Kimura} (Dimension finie.) Tout motif de Chow $M$  admet une décomposition (non unique)
\[M= M_+ \oplus M_-\]
vérifiant $\Lambda^N M_+=0$ et $\Sym^N M_-=0$ pour un naturel $N$ assez grand.
 \end{conj}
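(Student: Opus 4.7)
Plan de preuve. Je proc�derais en trois temps. Tout d'abord, j'�tablirais que la classe $\mathcal{K}$ des motifs de Chow de dimension finie au sens de Kimura est stable par les op�rations fondamentales : sommes directes finies, facteurs directs, twists de Tate et produits tensoriels. Les trois premi�res stabilit�s sont formelles ; la stabilit� par produit tensoriel, qui est le point-cl�, repose sur les identit�s combinatoires exprimant $\Sym^n(M\oplus N)$ et $\Lambda^n(M\oplus N)$ en termes de puissances sym�triques et ext�rieures des facteurs, combin�es � la r�gle de commutativit� de Koszul dans la cat�gorie tensorielle sym�trique $\CHM(k)$. Ce calcul permet en particulier de montrer que si $M=M_+\oplus M_-$ et $N=N_+\oplus N_-$ sont dans $\mathcal{K}$, alors $M\otimes N$ admet une d�composition analogue avec bornes explicites en fonction de celles de $M$ et $N$.

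Ensuite, je traiterais les cas g�om�triques de base. Pour une courbe projective lisse $C$ de genre $g$, la d�composition de Chow-K�nneth $\mathfrak{h}(C) = \mathbbm{1}\oplus \mathfrak{h}^1(C)\oplus \mathbbm{1}(-1)$ sugg�re de poser $M_+=\mathbbm{1}\oplus\mathbbm{1}(-1)$ et $M_-=\mathfrak{h}^1(C)$ ; la nullit� $\Sym^{2g+1}\mathfrak{h}^1(C)=0$ s'obtient en identifiant $\Sym^n \mathfrak{h}^1(C)$ au facteur correspondant du motif d'une puissance sym�trique de la jacobienne $\Jac C$ et en exploitant que cette derni�re est une vari�t� ab�lienne de dimension $g$. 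Pour une vari�t� ab�lienne $A$, la d�composition de Deninger-Murre pr�sente $\mathfrak{h}^n(A)$ comme facteur direct d'une puissance sym�trique ou ext�rieure de $\mathfrak{h}^1(A)$, ce qui donne la dimension finie par stabilit� tensorielle. Plus g�n�ralement, tout motif facteur direct du motif d'un produit de courbes appartient � $\mathcal{K}$.

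La difficult� principale est l'�tape finale : montrer que tout motif de Chow appartient � $\mathcal{K}$. Seuls les motifs domin�s par des produits de courbes sont actuellement connus �tre dans $\mathcal{K}$, et une vari�t� g�n�rique (par exemple une surface K3 g�n�rale ou une vari�t� hyper-k�hler g�n�rale) ne se plonge pas comme facteur direct dans un tel produit. Une strat�gie possible serait d'exploiter la filtration conjecturale de Bloch-Beilinson pour r�duire l'�tude � des motifs \og de type ab�lien \fg{}, puis d'utiliser les techniques motiviques modernes (Section \ref{motifs mixtes}) permettant de construire des cycles concrets via stratification et th�or�mes de d�composition, comme cela a �t� men� pour certaines vari�t�s hyper-k�hler. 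Cette derni�re �tape me semble constituer l'obstacle essentiel qui maintient la conjecture ouverte en toute g�n�ralit�.
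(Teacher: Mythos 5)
Ce que vous proposez n'est pas une preuve, et ce n'est pas un d�faut de votre part : l'�nonc� en question est pr�cis�ment une \emph{conjecture} (Conjecture~\ref{conj Kimura}), due � Kimura et O'Sullivan, et le m�moire ne la d�montre pas. Vous l'avez d'ailleurs vous-m�me identifi� dans votre troisi�me paragraphe. Votre r�sum� des r�sultats partiels connus est exact : la classe $\mathcal{K}$ des motifs de dimension finie est stable par sommes directes, facteurs directs, twists et produits tensoriels (Kimura, O'Sullivan) ; les courbes y appartiennent via $\Sym^{2g+1}\mathfrak{h}^1(C)=0$ ; les vari�t�s ab�liennes et, plus g�n�ralement, tout motif facteur direct d'un motif de produit de courbes s'en d�duisent ; et au-del�, la conjecture est ouverte.

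L'heuristique que retient le m�moire (Remarque~\ref{rem kimura}) est l�g�rement diff�rente de la v�tre et compl�mentaire : partant d'une d�composition de Chow--K�nneth conjecturale $\mathfrak{h}(X)=\bigoplus \mathfrak{h}^n(X)$, on pose $M_+=\bigoplus\mathfrak{h}^{2n}(X)$ et $M_-=\bigoplus\mathfrak{h}^{2n+1}(X)$, et la nullit� des puissances $\Lambda^N M_+$ et $\Sym^N M_-$ serait alors une cons�quence de la conjecture de conservativit� (Conjecture~\ref{conj conserv}), puisque ces nullit�s sont imm�diates en cohomologie par les r�gles de signes de Koszul. Votre approche part plut�t de la stabilit� tensorielle et du cas des courbes pour remonter vers les motifs de type ab�lien. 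Les deux points de vue identifient le m�me obstacle : pour une vari�t� g�n�rale qui n'est pas domin�e par des produits de courbes, ni la d�composition de Chow--K�nneth, ni la conservativit�, ni l'appartenance � $\mathcal{K}$ ne sont connues, et aucune des strat�gies actuelles ne permet de franchir ce pas.

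Un point mineur de pr�cision dans votre deuxi�me paragraphe : l'identification $\Sym^n\mathfrak{h}^1(C)$ avec un facteur du motif d'une puissance sym�trique de $C$ (ou de sa jacobienne) est correcte dans l'esprit, mais l'argument original de Kimura pour $\Sym^{2g+1}\mathfrak{h}^1(C)=0$ passe par la relation $\mathfrak{h}^1(C)=\mathfrak{h}^1(\Jac C)$ et la Proposition~\ref{mot var abel} (isomorphisme d'alg�bres de Hopf), qui donne $\Sym^n\mathfrak{h}^1(\Jac C)=\mathfrak{h}^n(\Jac C)$ nul d�s que $n>2g$. Ce n'est pas essentiellement diff�rent de ce que vous sugg�rez, mais il vaut mieux le formuler via la jacobienne elle-m�me que via une puissance sym�trique de la courbe.
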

\begin{rem}\label{rem kimura}\begin{enumerate}
\item\label{koszul} (Lien avec les autres conjectures.)
Supposons que $M=\mathfrak{h}(X)$ admet une décomposition de Chow--Künneth (Conjecture \ref{conj CK}). Alors 
\[M_+=\bigoplus\mathfrak{h}^{2n}(X) \hspace{0.5cm} \textrm{et}   \hspace{0.5cm}   M_-=\bigoplus\mathfrak{h}^{2n+1}(X) \]
devraient vérifier la conjecture ci-dessus. 
En effet la conservativité (Conjecture \ref{conj conserv}) prédit qu'il suffit de vérifier ces relations en cohomologie, or la réalisation de $M_+$ est un espace vectoriel gradué de dimension finie concentré en degrés pairs, par conséquence toute puissance $N$-ème extérieure l'annule dès que $N$ est plus grand que la dimension totale.

Le raisonnement est analogue pour $M_-$. On remarque que la réalisation est concentrée en degré impair et que, par la règle des signes de Koszul, une puissance symétrique sur un motif ou une variété devient une puissance extérieure sur les groupes de cohomologie.
\item (Applications.) Au delà d'être une conjecture naturelle, la notion de dimension finie s'est avéré être  utile : elle est stable par plusieurs opérations, dont le produit tensoriel et le passage à un facteur direct, elle est vérifiée au moins par les courbes, et elle permet de déduire la conservativité pour tous les foncteurs  tensoriels quotient (notamment $\pi$ et $\pi'$ de \eqref{diagramme motifs}). Cela a permis à Kimura de déduire la conjecture de Bloch pour les surfaces dominées par un produit de courbes \cite{Kim}, voir le Théorème \ref{thm intro}(1). En appliquant ces propriétés de conservativité au Frobenius, Kahn a déduit que l'application classe de cycle est injective pour les produits de courbes elliptiques sur un corps fini \cite{Kahn},  voir le Théorème \ref{thm intro}(2). 
\end{enumerate}

\end{rem}
\section{Exemples}\label{exemple}
Dans cette section on discute des exemples de motifs. Ces exemples sont organisés dans trois sous-sections. Dans la première on présente des cas classiques où   les conjectures de la section précédente sont vérifiées. La deuxième sous-section  montre des subtilités (assez amusantes !) entre les différentes réalisations. La dernière partie étudie les motifs de variétés abéliennes CM. 
Tous les exemples de cette section sont repris à plusieurs endroits dans le texte.

\

On continue à travailler avec le diagramme \eqref{diagramme motifs} :
\[
\xymatrix{
    \CHM(k)  \ar@{->>}[d]^{\pi} \ar[rd]^{R} &   \\
      \Mot(k)  \ar@{^{(}->}[r]^I \ar@{->>}[d]^{\pi'} & \GrVect. \\
    \NUM(k) & 
  }
    \]
    et  $\mathfrak{h}(X)$ indiquera le motif d'une variété $X$ dans les différentes catégories, on précisera la catégorie concernée si cela est important.
    
\subsection*{Quelques évidences des conjectures de la Section \ref{conjectures standard}}

  \begin{prop}
  Soient $\mathfrak{h}(X)\in \Mot(k)$ un motif homologique, $f$ un endomorphisme de $\mathfrak{h}(X)$ et $p_n(f)$ le polynôme caractéristique de $f$ agissant sur $H^n(X)$. Supposons que $p_n(f)$ et $p_m(f)$ soient premiers entre eux pour tous les $n \neq m$. Alors $\mathfrak{h}(X)$ admet la décomposition de Künneth 
  \[\mathfrak{h}(X)=\bigoplus_{n=0}  \mathfrak{h}^n(X), \]
voir la Conjecture  \ref{kunneth}.
  De plus les projecteurs de cette décomposition appartiennent à l'algèbre $\Q[f]$.
  \end{prop}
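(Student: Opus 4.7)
Le plan est d'exploiter la coprimalit� des $p_n$ par une application directe de l'identit� de Bezout, puis de construire les projecteurs dans $\Q[f]$ et  d'utiliser la fid�lit� du foncteur de r�alisation $I : \Mot(k) \rightarrow \GrVect$ pour v�rifier les relations voulues.

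D'abord, je poserais $P_n(T) = \prod_{m\neq n} p_m(T) \in \Q[T]$. L'hypoth�se entra�ne que $p_n$ est premier avec chaque facteur de $P_n$, donc premier avec $P_n$ lui-m�me. Par Bezout, il existe $u_n, v_n \in \Q[T]$ tels que
\[ u_n(T)\, p_n(T) + v_n(T)\, P_n(T) = 1. \]
Je d�finirais alors
\[ \pi_n := v_n(f)\, P_n(f) \in \Q[f] \subset \End_{\Mot(k)}(\mathfrak{h}(X)), \]
ce qui garantit d�j� la derni�re assertion (appartenance � $\Q[f]$).

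Ensuite, pour v�rifier que les $\pi_n$ forment une famille de projecteurs orthogonaux dont la somme est l'identit�, j'utiliserais la fid�lit� de $I$ (qui est la d�finition m�me de l'�quivalence homologique) : il suffit de tester ces relations apr�s r�alisation. Par le th�or�me de Cayley--Hamilton appliqu� � l'action de $f$ sur $\HW^n(X)$, on a $p_n(f)|_{\HW^n(X)}=0$ ; en appliquant $u_n(T) p_n(T) + v_n(T) P_n(T) = 1$ en $f$ sur $\HW^n(X)$, on obtient  $\pi_n|_{\HW^n(X)} = \id$. Inversement, si $m \neq n$, alors $p_m$ divise $P_n$, donc $P_n(f)$, et a fortiori $\pi_n$, s'annule sur $\HW^m(X)$. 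Il en r�sulte imm�diatement que, sur $\HW^*(X)$,
\[ \sum_n \pi_n = \id, \quad \pi_n^2 = \pi_n, \quad \pi_n \pi_m = 0 \text{ pour } n\neq m, \]
et ces identit�s remontent � $\End_{\Mot(k)}(\mathfrak{h}(X))$ par fid�lit� de $I$.

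Enfin, comme la cat�gorie $\Mot(k)$ est pseudo-ab�lienne par construction (enveloppe karoubienne de la cat�gorie des correspondances modulo �quivalence homologique), les projecteurs $\pi_n$ admettent des images $\mathfrak{h}^n(X)$ qui donnent une d�composition $\mathfrak{h}(X) = \bigoplus_n \mathfrak{h}^n(X)$, avec $R(\mathfrak{h}^n(X)) = \HW^n(X)$ par construction. L'argument ne pr�sente pas d'obstacle s�rieux ; le seul point � surveiller est la v�rification que l'alg�bre $\Q[f]$ est bien contenue dans $\End_{\Mot(k)}(\mathfrak{h}(X))$ (ce qui est imm�diat puisque cette alg�bre d'endomorphismes est une $\Q$-alg�bre contenant $f$) et que la faisabilit� des calculs sur la cohomologie suffit, ce qui d�coule de la d�finition de $\Mot(k)$.
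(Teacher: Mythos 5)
Your proof is correct and follows essentially the same route as the paper, which only sketches the argument in a remark (``on applique l'identit\'e de Bezout entre $p_n(f)$ et $\prod_{m\neq n} p_m(f)$, voir \cite{KM}''). You have simply spelled out the standard details: constructing the idempotents $\pi_n = v_n(f)P_n(f)\in\Q[f]$, verifying the projector relations on cohomology via Cayley--Hamilton, lifting them to $\Mot(k)$ by faithfulness of $I$, and splitting the idempotents in the pseudo-abelian category.
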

   
  \begin{rem}\label{rem kunn abel}\begin{enumerate}
  \item La preuve est élémentaire : on applique l'identité de Bezout entre $p_n(f)$ et $\prod_{n\neq m} p_m(f),$ voir \cite{KM}.
  \item On peut appliquer cette proposition à toute variété projective et lisse définie sur un corps fini et à $f$ leur Frobenius : les polynômes   $p_n(f)$  vont bien entre premiers être eux par les conjectures de Weil.
  
 La proposition s'applique aussi aux variétés abéliennes sur un corps quelconque. Dans ce cas $f$ est la multiplication par un entier $N$.
 \item\label{kunn abel} Ces résultats ne s'étendent pas automatiquement à une décomposition de Chow--Künneth (Conjecture \ref{conj CK}), notamment pour les variétés sur un corps fini c'est une question ouverte. On dispose d'une décomposition de Chow--Kunneth pour les variétés abéliennes \cite{DeMu}, mais pour cela il faut utiliser la transformée de Fourier, voir Section \ref{AHP}. 
  \end{enumerate}
  \end{rem}
   \begin{prop}\label{cayley}
  Soient $M$ un motif homologique et $f$ un endomorphisme de $M$. Alors $f$ est inversible si et seulement si son action en cohomologie l'est.
  \end{prop}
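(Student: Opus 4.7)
The easy direction (``only if'') is tautological: $I$ is a functor, so if $f$ has a two-sided inverse $g$ in $\End_{\Mot(k)}(M)$, then $I(g)$ is a two-sided inverse of $I(f)$. So the plan is to prove the converse: if $I(f)$ is invertible, then so is $f$.

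My plan is to reduce the problem to a statement about a single finite-dimensional $\Q$-algebra, namely $\Q[f] \subset \End_{\Mot(k)}(M)$. The first key observation is that $\End_{\Mot(k)}(M)$ is a finite-dimensional $\Q$-algebra. Indeed, by the very definition of homological equivalence, the realization functor $I$ is faithful, so $I$ induces an injection
\[ I : \End_{\Mot(k)}(M) \hookrightarrow \End_{\GrVect}(I(M)) , \]
and the right-hand side is finite-dimensional since $I(M)$ is a finite-dimensional graded $\Q$-vector space.

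Once finite-dimensionality is established, I would invoke the following elementary fact about a finite-dimensional $\Q$-algebra $A$: an element $a \in A$ is invertible in $A$ if and only if its minimal polynomial $p_a(X) \in \Q[X]$ has nonzero constant term; moreover when this holds, $a^{-1} \in \Q[a]$. The proof is the standard one: write $p_a(X) = X \cdot q(X) + c$ with $c=p_a(0)$. If $c \neq 0$ then $a^{-1} = -c^{-1} q(a)$, while if $c=0$ one contradicts minimality of $p_a$ using that $a$ is a non-zero-divisor on the right.

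The second key observation is that, since $I$ is an injective ring homomorphism from $\End_{\Mot(k)}(M)$ to $\End_{\GrVect}(I(M))$, a polynomial $q \in \Q[X]$ annihilates $f$ if and only if it annihilates $I(f)$. Consequently $f$ and $I(f)$ have the \emph{same} minimal polynomial $p \in \Q[X]$. Combining this with the fact above applied to both algebras, we get
\[ f \text{ invertible in } \End_{\Mot(k)}(M) \iff p(0)\neq 0 \iff I(f) \text{ invertible in } \End_{\GrVect}(I(M)), \]
which is the desired conclusion. The ``main obstacle'' here is really just the finite-dimensionality of $\End_{\Mot(k)}(M)$, but this is immediate from the faithfulness of $I$, so the argument is in fact quite short, and the final formula $f^{-1} \in \Q[f]$ matches the conclusion of the previous proposition.
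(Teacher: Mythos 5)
Your proof is correct and is essentially the paper's argument: the paper's one-line indication is ``on applique Cayley--Hamilton,'' i.e.\ take the characteristic polynomial $p$ of $I(f)$ on the finite-dimensional graded space $I(M)$, note $p(0)\neq 0$ since $I(f)$ is invertible, transfer the relation $p(f)=0$ through the faithful functor $I$, and solve for $f^{-1}\in\Q[f]$. You phrase it via minimal polynomials and first establish finite-dimensionality of $\End_{\Mot(k)}(M)$, which is a harmless but unnecessary detour --- Cayley--Hamilton hands you the needed polynomial relation directly from the finite-dimensionality of $I(M)$ --- but the mechanism (faithfulness of $I$ plus a polynomial relation forcing $f^{-1}\in\Q[f]$) is identical.
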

    \begin{rem}\begin{enumerate}
  \item La preuve est élémentaire : on applique Cayley--Hamilton.
  \item Cette question donne une réponse affirmative à la Conjecture \ref{conj conserv} de conservativité pour le foncteur
  \[I : \Mot(k) \longrightarrow \GrVect\]
pour les endomorphismes.  On peut partiellement étendre le résultat aux morphismes quelconques de la façon suivante.
  \end{enumerate}
  \end{rem}
    \begin{prop}\label{prop conserv}
  Soient $M$ et $N$ deux motifs homologiques et considérons deux applications $f : M \rightarrow N$ et $g: N \rightarrow M$.
 Supposons que $I(f)$ et $I(g)$ soient des isomorphismes en cohomologie, alors $f$ et $g$ sont des isomorphismes.
    \end{prop}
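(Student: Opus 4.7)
Le plan est d'appliquer la Proposition \ref{cayley} aux deux endomorphismes $g\circ f \in \End(M)$ et $f\circ g \in \End(N)$, puis d'en déduire l'inversibilité de $f$ et de $g$ séparément par un argument catégorique formel.

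Première étape : je vérifierais que $g\circ f$ et $f\circ g$ sont des isomorphismes dans $\Mot(k)$. Comme $I$ est un foncteur, $I(g\circ f) = I(g)\circ I(f)$ et $I(f\circ g)=I(f)\circ I(g)$ sont des compositions d'isomorphismes par hypothèse, donc des isomorphismes dans $\GrVect$. La Proposition \ref{cayley}, appliquée successivement à ces deux endomorphismes, donne alors que $g\circ f$ et $f\circ g$ sont eux-mêmes des isomorphismes dans $\Mot(k)$.

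Deuxième étape : je déduirais l'inversibilité de $f$ et $g$ par un argument formel de mono/épi scindé. Les morphismes $\alpha = (g\circ f)^{-1}\circ g$ et $\beta = g\circ (f\circ g)^{-1}$, tous deux dans $\Hom(N,M)$, vérifient $\alpha\circ f = \id_M$ et $f\circ\beta = \id_N$, d'où
\[\alpha = \alpha\circ f\circ\beta = \beta.\]
Ainsi $f$ est un isomorphisme d'inverse $\alpha=\beta$. Une fois cette inversibilité acquise, celle de $g$ s'en déduit immédiatement en écrivant $g = (g\circ f)\circ f^{-1}$, composition de deux isomorphismes.

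L'argument ne présente pas de réel obstacle : tout le contenu non trivial réside dans la Proposition \ref{cayley}, qui repose sur Cayley--Hamilton dans l'algèbre $\End_{\Mot(k)}(M)$. La présente proposition consiste simplement à convertir la conservativité pour les endomorphismes en conservativité pour des morphismes entre motifs potentiellement distincts, par le passage aux compositions croisées $g\circ f$ et $f\circ g$.
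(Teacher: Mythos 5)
Votre démonstration suit exactement la même approche que celle du texte, qui indique simplement que le résultat se déduit de la Proposition \ref{cayley} appliquée aux endomorphismes $g\circ f$ et $f\circ g$. Votre rédaction explicite proprement l'argument catégorique formel qui conclut ; elle est correcte.
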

      \begin{rem}\begin{enumerate}
  \item C'est une conséquence de la Proposition \ref{cayley} appliquée aux endomorphismes $fg$ et $gf$.
  \item On remarquera que ce n'est pas une solution complète de la conservativité pour $I$ : étant donné un morphisme $f$ il faut être capable d'en construire un dans l'autre sens. Cette proposition est tout de même utile   comme on peut le voir dans la proposition ci-dessous, ainsi que dans la Section \ref{section conserv}.
  \end{enumerate}
    \end{rem}

  \begin{prop}\label{mot var abel} (Kleiman \cite{GKL})
Soit $A$ une variété abélienne de dimension $g$. Alors son motif homologique admet une décomposition de Künneth
\[\mathfrak{h}(A)=\bigoplus_{n=0}^{2g} \mathfrak{h}^n(A)\]
et un isomorphisme de motifs en algèbres de Hopf graduées
\[\bigoplus_{n=0}^{2g} \mathfrak{h}^n(A) = \bigoplus_{n=0}^{2g} \Sym^n\mathfrak{h}^1(A)\]
qui donne en cohomologie l'isomorphisme classique\footnote{Le changement de puissance symétrique en alternée est le même que celui dans la Remarque \ref{rem kimura}\eqref{koszul}.} $H^*(A)=\Lambda^*H^1(A)$.
De plus $A$ vérifie la conjecture standard de type Lefschetz (Conjecture \ref{conj autodual} et Conjecture \ref{CSTL})
\[    \mathfrak{h}^{2g-n}(A)(g)  = \mathfrak{h}^n(A)^\vee \cong  \mathfrak{h}^n(A)(n).\]
    \end{prop}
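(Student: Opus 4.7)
Le plan suit naturellement les trois assertions : K\"unneth, d\'ecomposition en puissances sym\'etriques, autodualit\'e.

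D'abord, pour la d\'ecomposition de K\"unneth, je consid\'ererais l'endomorphisme $[N]^*$ de $\mathfrak{h}(A)$ induit par la multiplication par $N\geq 2$ sur $A$. Classiquement $[N]^*$ agit par $N \cdot \Id$ sur $H^1(A)$, donc par $N^n \cdot \Id$ sur $H^n(A) = \Lambda^n H^1(A)$. Les valeurs propres $N^0, N^1, \ldots, N^{2g}$ \'etant deux \`a deux distinctes, les polyn\^omes caract\'eristiques de $[N]^*$ restreint \`a $H^n(A)$ et \`a $H^m(A)$ sont premiers entre eux pour $n\neq m$. La proposition pr\'ec\'edente (K\"unneth via s\'eparation des valeurs propres) fournit alors les projecteurs $p_n \in \Q[[N]^*]$ et la d\'ecomposition $\mathfrak{h}(A) = \bigoplus \mathfrak{h}^n(A)$ cherch\'ee.

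Ensuite, pour l'isomorphisme $\mathfrak{h}^n(A) \cong \Sym^n \mathfrak{h}^1(A)$, je remarquerais que la loi de groupe $m: A \times A \to A$ et la section nulle $e: \Spec k \to A$ font de $\mathfrak{h}(A)$ une alg\`ebre de Hopf (super-)commutative gradu\'ee dans $\Mot(k)$, de produit $m^*$. La multiplication it\'er\'ee d\'efinit un morphisme $\mathfrak{h}^1(A)^{\otimes n} \to \mathfrak{h}^n(A)$ qui, par la r\`egle de signes de Koszul sur le motif $\mathfrak{h}^1(A)$ (de poids impair), se factorise \`a travers $\Sym^n \mathfrak{h}^1(A)$. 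Sa r\'ealisation cohomologique est l'isomorphisme classique $\Lambda^n H^1(A) \isocan H^n(A)$. Pour en d\'eduire que c'est d\'ej\`a un isomorphisme motivique, je construirais un inverse cohomologique (par exemple via le coproduit dual provenant du morphisme diagonal) et j'appliquerais la Proposition \ref{prop conserv}.

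Enfin, pour la conjecture standard de type Lefschetz, l'\'egalit\'e $\mathfrak{h}^{2g-n}(A)(g) = \mathfrak{h}^n(A)^\vee$ d\'ecoule directement de la d\'ecomposition de K\"unneth et de la dualit\'e de Poincar\'e $\mathfrak{h}(A)^\vee = \mathfrak{h}(A)(g)$. L'autodualit\'e $\mathfrak{h}^n(A)^\vee \cong \mathfrak{h}^n(A)(n)$, qui est l'\'enonc\'e non trivial, se ram\`ene au cas $n=1$ par puissances sym\'etriques. Je choisirais alors une polarisation de $A$, autrement dit un diviseur ample $\theta \in \CH^1(A)$ ; par le pull-back par le morphisme d'addition $A\times A \to A$ (ou de mani\`ere \'equivalente \`a partir du fibr\'e de Poincar\'e), on obtient un morphisme alg\'ebrique
\[q : \mathfrak{h}^1(A) \otimes \mathfrak{h}^1(A) \longrightarrow \Q(-1).\]
Cette forme, qui n'est autre que la forme de Riemann, est non d\'eg\'en\'er\'ee en r\'ealisation ; la Proposition \ref{prop conserv} donne alors l'isomorphisme motivique $\mathfrak{h}^1(A)^\vee \isocan \mathfrak{h}^1(A)(1)$, qui se propage en $\mathfrak{h}^n(A)^\vee \cong \mathfrak{h}^n(A)(n)$ par application de $\Sym^n$.

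Le c\oe ur technique de la preuve n'est pas dans une id\'ee isol\'ee --- chacune des \'etapes est classique --- mais dans la coh\'erence fonctorielle et la manipulation rigoureuse des signes de Koszul : il faut s'assurer que chaque morphisme motivique construit par fonctorialit\'e r\'ealise bien, apr\`es passage \`a la cohomologie, la fl\`eche classique attendue. La construction explicite du morphisme $\Sym^n \mathfrak{h}^1(A) \to \mathfrak{h}^n(A)$ et la v\'erification de son inversibilit\'e motivique (plut\^ot que seulement cohomologique) me semblent constituer l'obstacle principal.
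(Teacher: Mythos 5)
Votre plan est structurellement proche de celui du texte : K\"unneth par s\'eparation des valeurs propres de $[N]^*$, isomorphisme $\mathfrak{h}^n(A)\cong\Sym^n\mathfrak{h}^1(A)$ via une paire de morphismes g\'eom\'etriques et la Proposition~\ref{prop conserv}, r\'eduction de l'autodualit\'e au cas $n=1$. Deux points m\'eritent toutefois d'\^etre signal\'es.

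D'abord, une confusion (purement terminologique mais r\'ev\'elatrice) sur la structure de Hopf : vous pr\'esentez $m^*$ comme le \emph{produit} de $\mathfrak{h}(A)$, or $m^*: \mathfrak{h}(A)\to\mathfrak{h}(A)\otimes\mathfrak{h}(A)$ est le \emph{coproduit} (produit de Pontryagin dual). Le produit, i.e.\ le cup-produit, provient de la diagonale $\Delta^*:\mathfrak{h}(A)\otimes\mathfrak{h}(A)\to\mathfrak{h}(A)$. C'est donc la diagonale (it\'er\'ee $A\hookrightarrow A^n$) qui fournit $\Sym^n\mathfrak{h}^1(A)\to\mathfrak{h}^n(A)$, et le morphisme de somme $s:A^n\to A$ qui fournit la fl\`eche en sens inverse. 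Vous avez invers\'e les r\^oles, mais le sch\'ema d'argument (une fl\`eche dans chaque sens, puis Proposition~\ref{prop conserv}) est le bon.

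Le point v\'eritablement lacunaire est l'\'etape Lefschetz pour $n=1$. Vous construisez une seule application $q:\mathfrak{h}^1(A)\otimes\mathfrak{h}^1(A)\to\Q(-1)$, c'est-\`a-dire une seule fl\`eche $\mathfrak{h}^1(A)\to\mathfrak{h}^1(A)^\vee(-1)$, et vous invoquez la Proposition~\ref{prop conserv} pour conclure. Mais cette proposition exige \emph{deux} morphismes, un dans chaque sens, dont les r\'ealisations sont des isomorphismes : la non-d\'eg\'en\'erescence de $q$ en cohomologie ne suffit pas \`a inverser $q$ dans $\Mot(k)$, sans quoi la conjecture standard de type Lefschetz serait une tautologie. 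Tout le contenu non trivial de l'\'enonc\'e r\'eside pr\'ecis\'ement dans la construction alg\'ebrique de cet inverse. Le texte y parvient en fixant une isog\'enie $A\sim A^\vee$ (d'o\`u $\mathfrak{h}^1(A)\cong\mathfrak{h}^1(A^\vee)$) puis en utilisant le diviseur de Poincar\'e sur $A\times A^\vee$ pour produire une correspondance $\mathfrak{h}^1(A)^\vee(-1)\to\mathfrak{h}^1(A^\vee)$. Ce deuxi\`eme morphisme manque compl\`etement dans votre r\'edaction.
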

    \begin{proof}
    La décomposition de Künneth a déjà été discutée dans la Remarque \ref{rem kunn abel}\eqref{kunn abel}.

    On considère l'inclusion diagonale $\Delta : A \hookrightarrow A^n$. Elle induit une application
    $\mathfrak{h}(\Delta) : \mathfrak{h}(A)^{\otimes n} \rightarrow \mathfrak{h}(A)$ qui donne   une application $ \Sym^n\mathfrak{h}^1(A)  \rightarrow \mathfrak{h}^n(A).$
    De façon analogue on  construit une application dans l'autre sens en partant du morphisme de somme $s : A^n \rightarrow A$. 
    
    On peut maintenant appliquer la Proposition \ref{prop conserv} et déduire $ \bigoplus_{n=0}^{2g} \mathfrak{h}^n(A) \cong \bigoplus_{n=0}^{2g} \Sym^n\mathfrak{h}^1(A)$. Les applications construites ci-dessus respectent les structures d'algèbre de Hopf par définition de ces dernières, ce sont d'ailleurs les structures d'algèbre de Hopf qui forcent ces applications à être des isomorphismes en cohomologie.
    
    L'égalité  $\mathfrak{h}^{2g-n}(A)(g)  = \mathfrak{h}^n(A)^\vee$  est donnée par la dualité de Poincaré, voir la Remarque \ref{poincare motif}. Pour l'autodualité $\mathfrak{h}^n(A)^\vee \cong  \mathfrak{h}^n(A)(n)$ on se réduit au cas $n=1$, grâce à l'égalité  $ \Sym^n\mathfrak{h}^1(A)   = \mathfrak{h}^n(A)$.
    
    Pour $\mathfrak{h}^1(A)^\vee \cong  \mathfrak{h}^1(A)(1)$ on applique encore la Proposition \ref{prop conserv}. Une application est donnée par l'opérateur de Lefschetz $L^{g-1}: \mathfrak{h}^1(A) \rightarrow  \mathfrak{h}^{2g-1}(A)(g-1)=\mathfrak{h}^1(A)^\vee(-1)$. Pour  l'autre sens, fixons une isogénie entre $A$ et sa duale $A^\vee$, ce qui donne un isomorphisme $\mathfrak{h}^1(A)\cong \mathfrak{h}^1(A^\vee)$. Il s'agit alors de construire une application de $\mathfrak{h}^1(A)^\vee(-1)$ vers $\mathfrak{h}^1(A^\vee)$. C'est la donné d'un diviseur sur $A\times A^\vee$, le diviseur de Poincaré convient.    
    \end{proof} 
    \begin{rem}
    Les résultats de la proposition ci-dessus sont valables même dans $\CHM(k)$ mais leur preuve est plus délicate et repose sur la transformée de Fourier pour les anneaux de Chow de variétés abéliennes, voir aussi la Section \ref{AHP}.
    \end{rem}

         \subsection*{Motifs et théorèmes de comparaison}

\begin{exemple}\label{K3 fermat} (Réduction supersingulière et $\Q$-structures.)
Soit $S$ une surface K3 complexe de rang de Picard maximal et choisissons un premier $p$ de bonne réduction tel que la réduction $S_p$ soit supersingulière.
La surface quartique de Fermat avec $p\equiv -1 \,\,(4)$ est un tel exemple \cite{ShioF}.
Dans ce cas le motif homologique vérifie
\[ \mathfrak{h}(S) =  \mathbbm{1} \oplus \mathfrak{h}^2(S_0)  \oplus \mathbbm{1}(-2) \hspace{0.5cm} \textrm{et} \hspace{0.5cm} \mathfrak{h}^2(S_0) = \mathbbm{1}(-1)^{\oplus 20} \oplus  \mathfrak{h}^{2,\tr}(S).
\]
Le motif $\mathfrak{h}^{2,\tr}(S)$ est appelé motif transcendant. Sa réalisation singulière est une structure de Hodge de dimension $2$ et de type $(0,2)$. Sa réduction   modulo $p$ est isomorphe à $\mathbbm{1}(-1)^{\oplus 2}$.

Notons $M$ le motif $\mathfrak{h}^{2,\tr}(S)(1)$, $V_B$ sa réalisation singulière, $M_p$ sa réduction modulo $p$, et $Z_p$ les classes algébriques en caractéristique $p$, c'est-à-dire $Z_p=\Hom(\mathbbm{1}, M_p)$. 
Remarquons que $V_B$ et $Z_p$ sont des $\Q$-espaces vectoriels de dimension $2$.

Considérons les identifications
\begin{align}\label{identificazione fq}V_B \otimes_\Q \Q_\ell = R_\ell(M_0) = R_\ell(M_p) = Z_p\otimes_\Q \Q_\ell,\end{align}
où la première est donnée par le théorème de comparaison d'Artin entre cohomologie singulière et $\ell$-adique, la deuxième suit du théorème de changement de base propre et lisse et la dernière vient de la propriété de supersingularité.
On peut imaginer l'identification $V_B \otimes_\Q \Q_\ell =Z_p\otimes_\Q \Q_\ell$ comme un isomorphisme de périodes. C'est ce point de vue qui a inspiré le travail présenté dans la Section \ref{section andre}.

Il est naturel de se demander si les deux $\Q$-structures $V_B$ et $Z_p$ sont respectées sous cette identification. Expliquons pourquoi la réponse est non. Si on considère le cup-produit sur $V_B$ et $Z_p$ on déduit deux  $\Q$-formes quadratiques $q_B$ et $q_Z$. Si les espaces étaient égaux on aurait, en particulier, que ces deux formes quadratiques seraient isomorphes. Or $q_B$ est définie positive par les relations de Hodge--Riemann et $q_Z$ est définie négative par le théorème de l'indice de Hodge\footnote{Voir respectivement la Définition \ref{def pol pair} et la Conjecture \ref{CSTH}, qui est connue pour les diviseurs. Les formes quadratiques qu'y apparaissent sont obtenues à partir du cup-produit par un changement de signe.}.
\end{exemple}
\begin{rem} Gardons les notations de l'exemple ci-dessus.
C'est intéressant de regarder les $\Q$-formes quadratiques $q_B$ et $q_Z$ non seulement à la place à l'infini mais aussi aux autres completions de $\Q$. L'identification \eqref{identificazione fq} montre l'égalité $q_B\otimes \Q_\ell=q_Z\otimes \Q_\ell$ pour tout $\ell \neq p$. La seule place qui reste à déterminer est en $p$. Or elle est déterminée par les autres places\footnote{Cela suit de  la formule du produit sur les symboles de Hilbert, l'argument sera détaillé dans la Section \ref{positivite}.} et en particulier $q_B\otimes \Q_p \neq q_Z\otimes \Q_p$ puisque $q_B\otimes \R \neq q_Z\otimes \R$.

Une remarque amusante : observons que la forme quadratique $q_Z$ reconnait le nombre premier $p$, c'est en effet le seul nombre premier pour lequel $q_B\otimes \Q_p \neq q_Z\otimes \Q_p$. En particulier si on fait varier $p$ parmi tous les premiers à réduction supersingulière on trouvera des $\Q$-formes quadratiques toujours différentes.

Plus sérieusement,  remarquons que les $\Q_p$-espaces $V_B \otimes_\Q \Q_p$ et  $ Z_p\otimes_\Q \Q_p$ ont une interprétation cohomologique : $V_B \otimes_\Q \Q_p$ est isomorphe à la réalisation $p$-adique de $M$, encore par le théorème de comparaison   d'Artin, et $ Z_p\otimes_\Q \Q_p$ est la partie Frobenius invariante de la réalisation crystalline de $M_p$. Il est naturel alors de se demander si la relation  $q_B\otimes \Q_p \neq q_Z\otimes \Q_p$ peut s'obtenir par des méthodes purement $p$-adiques, par exemple via la théorie de Hodge $p$-adique. 
La réponse  est oui, elle sera esquissé dans la Section \ref{positivite}. Ces techniques permettront par ailleurs de montrer des cas de la Conjecture de positivité \ref{Conj pos}, voir encore la Section \ref{positivite}.
\end{rem}

\begin{exemple}\label{exemple periode} (Périodes.)
Considérons la catégorie  $\CHM(\Q)$ des motifs définis sur le corps $\Q$. Elle est munie de deux foncteurs vers les $\Q$-espaces vectoriels gradués
\[ R_{B} , R_{\dR}  :  
 \CHM(\Q) 
  \longrightarrow 
   \GrVect_{\Q}
    \]
    la réalisation singulière, ou de Betti, et la réalisation de de Rham algébrique.
    Les théorèmes de comparaison fournissent une identification
    \[R_{B} \otimes_\Q \C = R_{\dR}\otimes_\Q \C \]
     essentiellement induite par l'integration des formes différentielles algébriques sur des simplexes topologiques. Les coefficients complexes qui apparaissent dans cette identification sont appelés périodes et sont attendus être aussi transcendants que possible.
     
     Le formalisme tannakien montre l'existence d'isomorphismes  \[R_{B} \otimes_\Q \overline{\Q} \cong R_{\dR}\otimes_\Q \overline{\Q} \] de foncteurs monoïdaux
     (mais on ne sait pas en construire un explicitement). Montrons qu'en revanche il ne peut pas y avoir d'isomorphisme monoïdal entre $R_{B}  $ et   $R_{\dR}    $ et même que l'on a 
 \[R_{B} \otimes_\Q \R \not\cong R_{\dR}  \otimes_\Q \R.  \] 
          
     Pour le montrer il suffit de trouver un motif   $M$ muni d'une application
     \[q:\Sym^2 M  \longrightarrow \mathbbm{1}\]
    telle que  les réalisations $R_{B}(q)$ et  $R_{\dR}(q)$ sont deux  formes quadratiques avec signature différente.
    
    Par exemple on peut prendre $M$ tel que  $R_{B}(M)$ soit une structure de Hodge de poids $0$ et types $(-1,1)$ et $(0,0)$ et $q$ tel que $R_{B}(q)$ soit une polarisation de $R_{B}(M)$, voir  Définition \ref{def pol pair}.
    Alors la signature de la forme quadratique $R_{B}(q)$ est $(\dim R_{B}(M)^{0,0} , 2 \cdot \dim R_{B}(M)^{-1,1})$.
    
 D'autre part la réalisation $R_{\dR}(q)$ respecte la filtration de de Rham et donc l'espace $\Fil^1R_{\dR}(q)$ est isotrope ce qui force la signature $(s_+,s_-)$ de $R_{\dR}(q)$ à vérifier $s_+\geq \dim\Fil^1R_{\dR}(q)$. Or  $ \dim\Fil^1R_{\dR}(q)=\dim R_{B}(M)^{-1,1}$, il suffit donc de trouver un exemple où  l'on a    $\dim R_{B}(M)^{-1,1} > \dim R_{B}(M)^{0,0}$   pour conclure.
    
    De tels exemples se trouvent   dans la catégorie engendrée par les courbes d'équation $C_n :  y^2=x^n-1$, voir   \cite{Sch}. Plus précisément $M$ sera\footnote{Remarquons que le premier candidat que l'on pourrait imaginer, à savoir $M=\mathfrak{h}^2(S)(1)$ avec $S$ une surface, ne peut pas fonctionner. En effet pour les surface on a l'inégalité opposée : $h^{1,1}>h^{0,2}$, voir \cite[Proposition 22]{Sch}.} la partie $G$-invariante du motif  $\mathfrak{h}^2(C^N_n)(1)$ pour certains entiers $n,N$  et  pour un groupe fini $G$  convenable agissant sur $C^N_n$ et défini sur $\Q$.
    
    Notons par ailleurs que, par le théorème de comparaison de de Rham, $R_{B} \otimes_\Q \R$ est isomorphe à  la cohomologie de de Rham classique du lieu complexe sous-jacent, calculée à l'aide des formes différentielles $\mathcal{C}^{\infty}$ à coefficients réels. En particulier on vient de montrer que, pour les variétés algébriques  définies sur $\R$, il ne peut pas y avoir d'isomorphisme naturel entre la cohomologie de de Rham algébrique   de la variété et  la cohomologie de de Rham classique du lieu complexe. 
         \end{exemple}
         
            \subsection*{Motifs de variétés abéliennes CM}
La Proposition \ref{mot var abel}   montre que le motif d'une variété abélienne $A$ est contrôlé par son $\mathfrak{h}^1(A)$. Si $A$ est une variété abélienne CM on peut utiliser l'action CM pour décomposer $\mathfrak{h}^1(A)$ et donc le motif de $A$ tout entier. Cette décomposition est bien utile : Clozel l'avait déjà utilisée pour un résultat sur la conjecture $\hom=\num$ (voir le Théorème  \ref{thm clozel}) et on l'utilisera également pour les Conjectures de conservativité et de positivité, voir les Section \ref{section conserv} et \ref{positivite}.

D'autre part, bien qu'élémentaire, cette décomposition n'est pas digeste à la première lecture, on encourage à y revenir au fur et à mesure de ses applications.

\begin{exemple}\label{decomp CM}
    Soit $A$ une variété abélienne simple de dimension $g$ définie sur un corps fini. Fixons une polarisation et considérons l'involution de Rosati induite sur $\End(A) \otimes \Q.$
    Par Honda--Tate il existe un corps CM de degré $2g$ et une inclusion $F \subset \End(A) \otimes \Q$ tels que l'involution laisse stable $F$ et agit comme la conjugaison complexe sur $F$. 
    
    Considérons l'action de $F$ sur le motif $\mathfrak{h}^1(A)$ dans la catégorie  $\CHM(k)_{\tilde{F}}$ des motifs de Chow à coefficients dans une clôture galoisienne $\tilde{F}$ de $F$. Elle décompose le motif 
    \begin{align}\label{decomp h1}
    \mathfrak{h}^1(A) =L_1 \oplus \ldots \oplus L_{2g}
    \end{align}
    en une somme de $2g$ facteurs  échangés par l'action du groupe de Galois $\Gal(\tilde{F}/\Q)$ et la réalisation de chaque facteur est une droite propre pour l'action de $F$. De plus le choix d'une polarisation induit un morphisme dans $\CHM(k)$
    \[q_1 : \mathfrak{h}^1(A) \otimes \mathfrak{h}^1(A) \longrightarrow \mathbbm{1}(-1).\]
    Par rapport à cet accouplement, une droite propre est orthogonale à toutes les autres hormis sa conjuguée complexe.

    À l'aide de la formule $\mathfrak{h}^n(A)  = \Sym^n\mathfrak{h}^1(A)$ on déduit une décomposition de $\mathfrak{h}^n(A)$ dans $\CHM(k)_{\tilde{F}}$ en somme de facteurs dont la réalisation a dimension un : chaque facteur correspond au produit tensoriel de $n$ différents $L_i$. De plus l'accouplement $q_n=\Sym^n q_1$ rend la réalisation d'une telle droite orthogonale à toutes les autres hormis sa conjuguée complexe.
    \end{exemple}
    \begin{rem}\label{rem decomp} Gardons les notations de l'exemple ci-dessus.
    \begin{enumerate}
    \item\label{decomp C} Soient $\alpha_1,\ldots, \alpha_{2g}$ les valeurs propres de l'action du Frobenius sur $\mathfrak{h}^1(A)$ comptées avec multiplicité. Quitte à les renuméroter on a $\alpha_i\cdot \alpha_{2g-i}=q$, où $q$ est le cardinal du corps de base. Cette symétrie des valeurs propres provient de l'accouplement parfait $q_1$.
    
    Les valeurs propres de l'action du Frobenius sur $\mathfrak{h}^n(A)$ sont données par tous les  produits possibles de $n$ distincts $\alpha_i$. La dimension de l'espace des classes Galois-invariantes dans $H^{2n}(A)(n)$ est alors donnée par le nombre de collections  $\{\alpha_{i_1},\ldots, \alpha_{i_{2n}} \}$ vérifiant
    \begin{align}\label{disarli}\alpha_{i_1}\cdot\ldots\cdot \alpha_{i_{2n}}=q^n.\end{align}
    La conjecture de Tate prédit que chaque droite propre de $H^{2n}(A)(n)$ correspondant à une telle collection contient une classe algébrique. 
    
    Cette conjecture est connue pour les diviseurs. Une droite propre contient donc une intersection de diviseurs si et seulement si la collection $\alpha_{i_1},\ldots, \alpha_{i_{2n}} $ vérifie
     \begin{align}\label{darienzo}\alpha_{i_j} \cdot \alpha_{i_{2n-j}}=q, \hspace{0.5cm}\forall j,
     \end{align}
     quitte à  renuméroter. 
     \item\label{decomp R}
Soit $F_0$ le plus grand sous-corps totalement réel de $\tilde{F}$. Le groupe de Galois $\Gal(\tilde{F}/F_0)$ est d'ordre deux, engendré par la conjugaison complexe. Son action   recolle la décomposition de l'exemple ci-dessus en une décomposition de  $\mathfrak{h}^n(A)$ dans la catégorie $\CHM(k)_{F_0}$   des motifs de Chow à coefficients dans $F_0$. Cette décomposition est orthogonale par rapport à  l'accouplement $q_n$. Les   facteurs obtenus sont de rang un ou deux.
    \item\label{decomp Q} L'action du groupe de Galois $\Gal(\tilde{F}/\Q)$ recolle la décomposition de l'exemple ci-dessus en une décomposition de  $\mathfrak{h}^n(A)$ dans la catégorie $\CHM(k)$   des motifs de Chow à coefficients rationnels. Cette décomposition est orthogonale par rapport à  l'accouplement $q_n$. Le rang des   facteurs obtenus varie et vaut au plus $2g$.
    
    Par la description du point \eqref{decomp C} on remarque que chaque facteur de $\mathfrak{h}^{2n}(A)(n)$ rentre dans une des trois catégories suivantes : 
          \begin{itemize}
          \item[(a)] La réalisation du facteur est engendrée par des classes qui sont toutes intersections de diviseurs,
          \item[(b)] La réalisation du facteur est Frobenius invariante mais ne contient aucune  intersection de diviseurs,
          \item[(c)] Le  Frobenius agissant sur la réalisation du facteur n'a aucun vecteur fixe.
          \end{itemize}
          Pour les questions de cycles algébriques c'est surtout la   classe (b) qui est intéressante. Si $A$ est de dimension quatre les facteurs de ce type ont toujours rang deux. Pour le montrer il s'agit d'étudier les quadruplets vérifiant \eqref{disarli} mais qui ne vérifient pas \eqref{darienzo}.  C'est une étude élémentaire mais dont la combinatoire est délicate, voir \cite[\S 7]{Anc21} pour les détails.
         \end{enumerate}
         \end{rem}

    \section{Autodualité  et conservativité}\label{section conserv}
Cette section concerne les Conjectures \ref{conj conserv} et \ref{conj autodual} de conservativité et d'autodualité et les résultats que l'on peut obtenir pour les variétés abéliennes. On notera 
\[ \CHM(k)^{\ab},   \hspace{0.5cm} \Mot(k)^{\ab}  \hspace{0.5cm}  \textrm{et}  \hspace{0.5cm}  \NUM(k)^{\ab}\]
les catégories de motifs engendrées par les motifs de variétés abéliennes.

On commence par rappeler les théorèmes fondamentaux de semisimplicité de Jannsen et de nilpotence de Kimura, puis on en déduit les   Conjectures \ref{conj conserv}, et  \ref{conj autodual}  pour $\CHM(k)^{\ab} $, avec $k=\C$.

Dans une deuxième partie on explique le contenu de \cite{Ascona} qui étudie ces conjectures pour $k=\mathbb{F}_q$. Il faudra combiner les théorèmes de Jannsen et Kimura avec les décompositions de l'Exemple \ref{decomp CM} induites par la multiplication complexe. Cette méthode est inspirée par un travail de Clozel \cite{Clozel} que nous rappellons également. 

\begin{thm}\label{thm jannsen}(Jannsen \cite{Jannsen})
La catégorie $\NUM(k)$ des motifs numériques est semisimple.
\end{thm}
\begin{thm}\label{thm kimura}(Kimura--O'Sullivan \cite{Kim,OS})
Le noyau du foncteur de projection
\[ \pi_{\num} : \CHM(k)^{\ab}  \longrightarrow  \NUM(k)^{\ab}\]
est nilpotent. En particulier le foncteur $\pi_{\num}$ est conservatif et toute décomposition dans $\NUM(k)^{\ab}$ se relève en une décomposition dans $\CHM(k)^{\ab}$. L'énoncé reste valable si on remplace  $ \NUM(k)^{\ab} $ par $\Mot(k)^{\ab} $ (ou n'importe quelle catégorie tensorielle quotiente). 
\end{thm}

\begin{prop}
 La Conjecture d'autodualité \ref{conj autodual} et la conjecture $\hom=\num$ sont vraies pour tout motif dans $\CHM(\C)^{\ab} $. De plus le foncteur de réalisation singulière
 \[ R  :  
 \CHM(\C)^{\ab}
  \longrightarrow 
   \GrVect_{\Q}
    \]
est conservatif  (cf. Conjecture \ref{conj conserv}).
\end{prop}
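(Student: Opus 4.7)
La strat\'egie consiste \`a d\'emontrer successivement les trois assertions: $\hom=\num$, conservativit\'e de la r\'ealisation, puis autodualit\'e. Chacune s'appuiera sur la pr\'ec\'edente, combin\'ee aux deux r\'esultats fondamentaux cit\'es plus haut. Premi\`erement, pour $\hom=\num$, la Proposition \ref{mot var abel} dit que les vari\'et\'es ab\'eliennes satisfont la conjecture standard de type Lefschetz. En caract\'eristique z\'ero, les relations de Hodge--Riemann classiques (Remarque \ref{rem HR2}) entra\^inent la positivit\'e, donc la non-d\'eg\'en\'erescence, de la forme quadratique $q_{Z,\hom}$ introduite dans la Proposition \ref{pol mot}. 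Ainsi $\hom=\num$ vaut pour les produits de vari\'et\'es ab\'eliennes et s'\'etend \`a la sous-cat\'egorie $\CHM(\C)^{\ab}$ qu'elles engendrent.

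Pour la conservativit\'e, je factorise la r\'ealisation en $R = I\circ\pi$. Le Th\'eor\`eme \ref{thm kimura} assure la conservativit\'e de $\pi:\CHM(\C)^{\ab}\to\Mot(\C)^{\ab}$ via la nilpotence de son noyau. Par l'\'etape pr\'ec\'edente, $\Mot(\C)^{\ab}=\NUM(\C)^{\ab}$, donc le Th\'eor\`eme \ref{thm jannsen} garantit que cette cat\'egorie est ab\'elienne semisimple. Le foncteur $I$ est fid\`ele par d\'efinition m\^eme de l'\'equivalence homologique; or, un foncteur additif fid\`ele sur une cat\'egorie ab\'elienne semisimple est automatiquement conservatif (il est exact car toutes les suites exactes courtes s'y scindent, donc un morphisme dont l'image est un isomorphisme a un noyau et un conoyau de r\'ealisation nulle, qui sont n\'ecessairement nuls puisqu'un objet $X$ avec $I(\id_X)=0$ v\'erifie $\id_X=0$ par fid\'elit\'e).

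Pour l'autodualit\'e, je prends un motif $M$ de poids $n$ dans $\CHM(\C)^{\ab}$, facteur direct d'un $\mathfrak{h}^n(A)$ pour une vari\'et\'e ab\'elienne $A$. La Proposition \ref{mot var abel} fournit un accouplement $q:\mathfrak{h}^n(A)\otimes\mathfrak{h}^n(A)\to\mathbbm{1}(-n)$ dans $\Mot(\C)^{\ab}$ dont la r\'ealisation est la polarisation classique sur $\HW^n(A)$ associ\'ee \`a une section hyperplane. La restriction $q_M$ \`a $M$ se r\'ealise en la restriction de cette polarisation \`a la sous-structure de Hodge $R(M)$, qui demeure une polarisation par la Proposition \ref{HS semisimple} (ou par le Corollaire \ref{gianluca}). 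Donc $R(q_M)$ est non d\'eg\'en\'er\'ee, et par la conservativit\'e le morphisme $q_M$ induit un isomorphisme $M\cong M^\vee(-n)$ dans $\Mot(\C)^{\ab}$; celui-ci se rel\`eve \`a $\CHM(\C)^{\ab}$ via la pleine fid\'elit\'e de $\pi$ (par construction) combin\'ee \`a sa conservativit\'e. Le point le plus d\'elicat du plan est l'invocation du Th\'eor\`eme \ref{thm kimura}, qui repose sur la notion de dimension finie sp\'ecifique aux motifs ab\'eliens et qui est le seul ingr\'edient restreignant l'argument \`a la sous-cat\'egorie $\CHM(\C)^{\ab}$.
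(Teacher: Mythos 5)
Votre preuve suit essentiellement la m\^eme route que celle du texte : m\^emes ingr\'edients (Lefschetz pour les vari\'et\'es ab\'eliennes via la Proposition \ref{mot var abel}, relations de Hodge--Riemann de la Remarque \ref{rem HR2} pour $\hom=\num$, nilpotence de Kimura du Th\'eor\`eme \ref{thm kimura}, semisimplicit\'e de Jannsen du Th\'eor\`eme \ref{thm jannsen}, et Proposition \ref{HS semisimple} pour restreindre la polarisation aux facteurs directs). La seule diff\'erence est un l\'eger r\'eordonnancement : vous prouvez la conservativit\'e avant l'autodualit\'e (et l'exploitez dans ce dernier pas), alors que le texte fait l'inverse et invoque directement la Proposition \ref{HS semisimple} pour l'autodualit\'e ; les deux ordres sont valides et vous explicitez au passage, plus que le texte, pourquoi un foncteur fid\`ele depuis une cat\'egorie ab\'elienne semisimple est conservatif. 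Une petite impr\'ecision : vous invoquez \og la pleine fid\'elit\'e de $\pi$ \fg{} pour relever l'isomorphisme \`a $\CHM(\C)^{\ab}$, alors que $\pi$ est plein mais non fid\`ele (c'est un quotient par un id\'eal non nul) ; ce qu'il faut est la pl\'enitude pour relever le morphisme, combin\'ee \`a la conservativit\'e de $\pi$ (fournie par le Th\'eor\`eme \ref{thm kimura}) pour constater que le rel\`evement est un isomorphisme. Comme vous citez aussi la conservativit\'e, l'argument tient, mais il vaut mieux remplacer \og pleine fid\'elit\'e \fg{} par \og pl\'enitude \fg.
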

\begin{proof}
La conjecture standard de type Lefschetz est vraie pour les variétés abéliennes (Proposition \ref{mot var abel}).
En caractéristique zéro, on peut en déduire $\hom=\num$   (Remarque \ref{rem HR2}).

De plus, on peut  munir $\mathfrak{h}^n(A)$ d'un accouplement
\[  \mathfrak{h}^n(A) \otimes \mathfrak{h}^n(A)  \rightarrow \mathbbm{1}(-n) \]
dont la réalisation singulière est une polarisation (Proposition \ref{pol mot}).
On en déduit que pour tout facteur direct $M$ du motif homologique $\mathfrak{h}^n(A)$ la restriction de l'accouplement à $M$ induit un isomorphisme $M \cong M^{\vee}(-n)$. (Ce fait est impliqué par la Proposition \ref{HS semisimple} et c'est le point crucial où on l'utilise que les motifs sont définis sur $\C$. )
L'autodualité pour les motifs homologiques se relève aussi dans $\CHM(\C)^{\ab}$ par le Théorème \ref{thm kimura}.

Passons maintenant à la conservativité et considérons le diagramme \eqref{diagramme motifs}. Encore par le Théorème \ref{thm kimura}, il suffira de démontrer la conservativité de la réalisation  des motifs homologiques  $I  : \Mot(\C)^{\ab} \rightarrow  \GrVect_{\Q}$.
D'autre part, la   conjecture $\hom=\num$ déduite au début de la preuve dit que  les catégories $\Mot(\C)^{\ab}$ et $\NUM(\C)^{\ab}$ coincident. En particulier, par le Théorème \ref{thm jannsen}, $I$ est un foncteur entre catégories semisimples, il est donc conservatif.
\end{proof}
 Dans la suite de la section on travaille sur un corps fini  $k=\mathbb{F}_q$. Ce qui remplacera l'utilisation de la polarisation dans la preuve ci-dessus est la multiplication complexe, via les décompositions de l'Exemple \ref{decomp CM}.

\begin{thm}\label{thm clozel}(Clozel \cite{Clozel}) Soit $A$ une variété abélienne sur un corps fini. Alors il existe une infinité de nombres premiers $\ell$ tels que l'équivalence numérique coïncide avec l'équivalence homologique pour la cohomologie $\ell$-adique.
 \end{thm}

 \begin{proof}
 On se ramène au cas où $A$ est simple. Soient $\Q \subset F_0 \subset \tilde{F}$ les corps de nombres introduits dans l'Exemple \ref{decomp CM} et la Remarque \ref{rem decomp}\eqref{decomp R}. (Ces corps dépendent de $A$ et plus précisément du choix d'un corps CM dans ses endomorphismes.) 
 
 Fixons un nombre premier $\ell$ tel qu'il existe une place $\lambda$ de $F_0$ au-dessus de $\ell$ telle que la complétion $(F_0)_\lambda$ ne contienne pas $\tilde{F}$. On va montrer qu'un tel $\ell$ convient. Remarquons qu'il y a une infinité de tels $\ell$ et que l'on peut estimer leur densité avec Chebotareff.  
  
 Considérons les classes algébriques de codimension $n$ que l'on voit comme classes dans  $\mathfrak{h}^{2n}(A)(n)$ et soit $q_{2n}$ l'accouplement construit dans l'Exemple \ref{decomp CM}. Puisqu'il est non dégénéré il suffit de voir que pour chaque classe algébrique non nulle $\gamma$ il existe une classe algébrique $\delta$ telle que $q_{2n}(\gamma, \delta)\neq 0$.
 On vérifie que la question est stable par changement de coefficients et on travaille avec les cycles à coefficients dans  $F_0$ et la réalisation $\lambda$-adique $R_\lambda$ à valeurs dans les $(F_0)_\lambda$-espaces vectoriels. Dans ce cas on utilise la décomposition de la Remarque \ref{rem decomp}\eqref{decomp R} en plans et droites. Il suffit alors de travailler avec un seul de ces facteurs $M$ et supposer que $\gamma$ vit dans  $M$. 
 Dans ce cas, si la forme quadratique $q_{2n}$ est sans vecteur isotrope   sur $M$  le choix $\delta=\gamma$ convient.
 
 Si $M$ a dimension un cela suit du fait que $q_{2n}$ est non dégénérée sur chaque facteur de la décomposition et donc sur $M$. Si $M$ a dimension deux alors il admet au plus deux droites isotropes. Ces deux droites existent au moins sur $M\otimes_{F_0} \tilde{F}$ : il s'agit de la décomposition en droites de l'Exemple \ref{decomp CM}. Montrons que ces droites ne sont pas contenues dans le $(F_0)_\lambda$-espace vectoriel $R_\lambda(M)$. Pour cela il suffira de construire   un endomorphisme $f : M \rightarrow M$   dans la catégorie $\CHM(k)_{F_0}$ dont ces droites sont des droites propres et de valeurs propres appartenant à $\tilde{F}-F_0$. Cela donnera la conclusion voulue puisqu'on a   que $(F_0)_\lambda$ ne contient pas $\tilde{F}$. 
 
 La construction de ce $f$ procède ainsi. Considérons la décomposition \eqref{decomp h1}. Quitte à changer la numérotation,  le motif $M$ est de la forme    
 \[M=(L_1\otimes \ldots \otimes L_{2n}) \oplus (\bar{L}_1\otimes \ldots \otimes \bar{L}_{2n}), \]
 où $\bar{\cdot}$ est la conjugaison complexe.
Fixons un ordre sur les $L_i$, ceci permet de réaliser $M$ comme facteur direct de $\mathfrak{h}^1(A)^{\otimes 2n}$ dans la catégorie  $\CHM(k)_{F_0}$. On peut alors définir $f$ par l'action induite par  un générateur de $F \subset \End(A) \otimes \Q$ sur le premier terme du produit tensoriel  et l'identité sur les autres $2n-1$.
 \end{proof}
 
  \begin{prop}\label{prop auto}
 La conjecture d'autodualité   \ref{conj autodual} est vraie pour les motifs de $\CHM(\mathbb{F}_q)^{\ab}$ de poids pair et dont la réalisation a dimension un.
 \end{prop}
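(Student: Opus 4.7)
Le plan est de construire l'isomorphisme d'autodualité à partir d'un accouplement de polarisation sur $A$, puis de vérifier sa non-dégénérescence via la décomposition CM. Par la décomposition de Chow--Künneth pour les variétés abéliennes (Remarque \ref{rem kunn abel}\eqref{kunn abel}), on se ramène, modulo un twist de Tate, à supposer que $M$ est facteur direct de $\mathfrak{h}^{2n}(A)$ pour une variété abélienne simple $A$ sur $\mathbb{F}_q$. Une polarisation de $A$ induit dans $\CHM(\mathbb{F}_q)$ un accouplement $q_1 : \mathfrak{h}^1(A)^{\otimes 2} \to \mathbbm{1}(-1)$, et via $\mathfrak{h}^{2n}(A) = \Sym^{2n}\mathfrak{h}^1(A)$ (Proposition \ref{mot var abel}), un accouplement $q_{2n} : \mathfrak{h}^{2n}(A)^{\otimes 2} \to \mathbbm{1}(-2n)$. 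Sa restriction à $M\otimes M$ donne un morphisme $\phi : M \to M^\vee(-2n)$ dans $\CHM(\mathbb{F}_q)^{\ab}$ ; il s'agit de montrer que $\phi$ est un isomorphisme, ce qui est précisément $M^\vee \cong M(2n)$.

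Pour cela on étend les coefficients à la clôture galoisienne $\tilde F$ d'un corps CM $F \subset \End(A) \otimes \Q$ (Honda--Tate). L'extension $\Q \hookrightarrow \tilde F$ étant fidèlement plate et l'inverse éventuel de $\phi_{\tilde F}$ étant unique, $\Gal(\tilde F/\Q)$-invariant et donc descendant à $\Q$, il suffit de vérifier que $\phi_{\tilde F}$ est un isomorphisme dans $\CHM(\mathbb{F}_q)_{\tilde F}^{\ab}$. Sur $\tilde F$, l'Exemple \ref{decomp CM} fournit la décomposition $\mathfrak{h}^{2n}(A)_{\tilde F} = \bigoplus_{|I|=2n} L_I$, avec $q_{2n}$ non nul uniquement sur les paires $L_I \otimes L_{\bar I}$. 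Les $L_I$ étant deux à deux non isomorphes (distingués par l'action de $F$, donc par les valeurs propres du Frobenius), Krull--Schmidt entraîne que le facteur direct $M_{\tilde F}$, de rang un, coïncide avec un unique $L_I$. Comme $M$ est défini sur $\Q$, ce $L_I$ est $\Gal(\tilde F/\Q)$-stable ; en particulier il est fixé par la conjugaison complexe, qui permute les $L_I$ via $L_I \mapsto L_{\bar I}$. On obtient $I = \bar I$, et la restriction de $q_{2n}$ à $L_I \otimes L_I = L_I \otimes L_{\bar I}$ est non nulle. Étant un accouplement non nul sur un objet de rang un, elle est non dégénérée, donc $\phi_{\tilde F}$ est un isomorphisme.

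L'obstacle principal est le cas, non générique, où plusieurs facteurs $L_I$ se trouvent mutuellement isomorphes (par exemple lorsque leurs valeurs propres de Frobenius coïncident par hasard) : $M_{\tilde F}$ peut alors être une ligne diagonale dans une somme de pièces isomorphes, et il faut vérifier qu'une telle ligne $\Gal$-invariante n'est pas isotrope pour $q_{2n}$. Ce point se traite par un argument galoisien analogue à celui de la preuve du Théorème \ref{thm clozel}, en exploitant l'action d'un endomorphisme de $A$ construit à partir d'un générateur de $F$ pour exclure les vecteurs isotropes dans un bon système de coefficients. Notons enfin que la parité du poids est essentielle : la conjugaison complexe agit sans point fixe sur les plongements d'un corps CM, de sorte que seules les parties $I$ de cardinal pair peuvent vérifier $\bar I = I$.
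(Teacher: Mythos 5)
Votre approche va dans la bonne direction mais comporte deux lacunes substantielles.

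\textbf{Première lacune : le cas que vous traitez est le cas facile, celui que vous différez est le cas qui importe.} Vous commencez par supposer que les $L_I$ ($|I|=2n$) sont deux à deux non isomorphes, ce que vous présentez comme le cas générique. Or c'est précisément dans le cas où des coïncidences $\alpha_I = \alpha_J$ se produisent que se trouvent les classes alg\'ebriques int\'eressantes (les facteurs de type (b) de la Remarque~\ref{rem decomp}(3)) ; si tous les $\alpha_I$ \'etaient distincts, le facteur $M$ serait simplement une droite propre isol\'ee et la proposition serait triviale. Ce n'est donc pas un ``cas non g\'en\'erique'' qu'on peut traiter en marge : c'est le c\oe ur du probl\`eme. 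Le renvoi, en une phrase, \`a ``un argument galoisien analogue \`a celui de la preuve du Th\'eor\`eme~\ref{thm clozel}'' ne suffit pas, d'autant que vous avez \'etendu les coefficients \`a $\tilde F$ et non \`a $F_0$ : sur $\tilde F$, tous les facteurs ont rang~$1$, et le probl\`eme de la ``droite diagonale'' dans $L_I \oplus L_J$ est structurellement diff\'erent du probl\`eme de rang~$2$ sur $F_0$ qui est la cl\'e de l'argument \`a la Clozel. Le papier s'arr\^ete d\'elib\'er\'ement \`a $F_0$ : la d\'ecomposition y a des facteurs $M$ de rang $1$ ou $2$, et pour un facteur de rang~$2$ on montre que la r\'ealisation $\lambda$-adique est sans vecteur isotrope parce que les droites isotropes ne sont d\'efinies que sur $\tilde F$, qui n'est pas contenu dans $(F_0)_\lambda$ pour un $\lambda$ bien choisi. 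En passant directement \`a $\tilde F$ vous perdez pr\'ecis\'ement cet effet de levier.

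\textbf{Deuxi\`eme lacune : le passage entre \'equivalences.} Votre argument (Krull--Schmidt, identification $M_{\tilde F} = L_I$, v\'erification de non-isotropie via la r\'ealisation) est formul\'e comme s'il se d\'eroulait dans $\CHM(\mathbb{F}_q)_{\tilde F}$, mais les v\'erifications cohomologiques n'y sont pas directement l\'egitimes : conclure qu'un morphisme de motifs de Chow est un isomorphisme \`a partir de sa r\'ealisation, c'est pr\'ecis\'ement la conjecture de conservativit\'e. Le papier contourne cela en deux temps : d'abord la nilpotence de Kimura (Th\'eor\`eme~\ref{thm kimura}) ram\`ene l'\'enonc\'e aux motifs num\'eriques ; puis la semisimplicit\'e de Jannsen (Th\'eor\`eme~\ref{thm jannsen}) ram\`ene l'isomorphisme \`a montrer que $\Hom(\pi_{\num}(X),\pi_{\num}(X)^\vee(-n)) \neq 0$ ; enfin on remonte aux motifs homologiques pour pouvoir utiliser la r\'ealisation. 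Ces \'etapes ne figurent pas dans votre preuve, alors qu'elles sont indispensables pour que les arguments de d\'ecomposition et d'isotropie aient un sens.

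Ce que vous avez correctement identifi\'e : la d\'ecomposition CM comme outil central, la construction de l'accouplement $q_{2n}$ \`a partir d'une polarisation, et le r\^ole essentiel de la parit\'e du poids (sym\'etrie vs. altern\'e, donc possibilit\'e de non-isotropie) --- c'est exactement la parenth\`ese cruciale du papier. Mais le chemin pour conclure n\'ecessite le cadrage Kimura/Jannsen et le choix minutieux $F_0$ + $\lambda$-adique, qui manquent tous deux ici.
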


 \begin{proof}
Soient $X$ un tel motif et $n$ son poids (pair). On a une variété abélienne $A$, telle que $X$ est facteur direct de $\mathfrak{h}^n(A)$.

On veut montrer que $X \cong X^{\vee}(-n)$. Par le Théorème \ref{thm kimura} il suffit de montrer $\pi_{\num}(X)  \cong \pi_{\num}(X)^{\vee}(-n)$.  Par la semisimplicité de Jannsen, il suffit alors de montrer que $\Hom(\pi_{\num}(X) , \pi_{\num}(X)^{\vee}(-n))\neq 0$. Cet énoncé peut se démontrer après  extension des scalaires.  On étend les scalaires au corps $F_0$ de la Remarque \ref{rem decomp}\eqref{decomp R}.

On utilise la décomposition de cette même remarque. Par semisimplicité on peut supposer que $X$ soit un facteur direct d'un des facteurs $M$ de cette décomposition. Rappelons que l'on  dispose d'un accouplement $q_n$ non-dégénéré sur $M$ et que  $M$ a dimension un ou deux. 

Si $M$ a dimension un alors $X=M$ et on a terminé. Si $M$ a dimension deux, remontons aux motifs homologiques, via  le Théorème \ref{thm kimura}. On pourra alors utiliser la réalisation et    il suffira de montrer que l'accouplement restreint à la droite qui est la	réalisation de $X$ reste non-dégénéré, autrement dit que la droite n'est pas isotrope. (C'est ici que l'on utilisera que  le poids est pair. Remarquons notamment que si le poids est impair l'accouplement sur $M$ est alterné et donc toute droite est isotrope.)

La subtilité est que la catégorie des motifs homologiques dépend a priori de la cohomologie choisie mais la bonne nouvelle est qu'il suffit d'étudier une seule cohomologie bien choisie. On utilise alors la cohomologie $\lambda$-adique comme dans la preuve du
Théorème \ref{thm clozel} avec le même choix de $\lambda$ : on y avait montré  que la réalisation de $M$ est sans vecteurs isotropes.
\end{proof}

 \begin{thm}\label{thm conservatif}
Les foncteurs de réalisation $\ell$-adique 
 \[ R_{\ell}  :  
 \CHM(\mathbb{F}_q)^{\ab}
  \longrightarrow 
   \GrVect_{\Q_\ell}
    \]
    sont conservatifs.
 \end{thm}
  \begin{proof}
Soit $f : X \rightarrow Y$ une application dans  $\CHM(\mathbb{F}_q)^{\ab}$ telle que $R_{\ell} (f ): R_{\ell} (X )\rightarrow R_{\ell} (Y)$ soit un isomorphisme. On veut montrer que $f$ est un isomorphisme également. 

 Par le Théorème \ref{thm kimura} il suffira de travailler avec l'équivalence homologique. En utilisant la décomposition de Künneth, il suffira de supposer que $R_{\ell} (X )$ et $R_{\ell} (Y)$ sont concentrés en un même degré cohomologique.  

Supposons d'abord que $R_{\ell} (X )$ et $R_{\ell} (Y)$  aient dimension un.
On dispose d'applications
\[  \mathbbm{1}  \longrightarrow Y \otimes X^{\vee} \cong Y^{\vee}\otimes X  \longrightarrow \mathbbm{1}, \]
où la première et la dernière application sont obtenues par adjonction à partir de $f$ et l'isomorphisme central vient de la Proposition \ref{prop auto}.

 Dans ce cas les réalisations des applications ci-dessus sont des isomorphismes et par  la Proposition \ref{prop conserv} on a  $\mathbbm{1}  \cong Y \otimes X^{\vee} $ donc $X \cong Y$.

Travaillons maintenant dans le cas général : soit $d$ la dimension de  $R_{\ell} (X )$ et $R_{\ell} (Y)$. Supposons que leur degré cohomologique soit pair (sinon il faudra remplacer des produits extérieurs par des produits symétriques dans la suite).
L'application 
\[\Lambda^d f : \Lambda^d  X\longrightarrow   \Lambda^d  Y  \]
retombe dans le cas particulier de la dimension un traité au-dessus. C'est donc un isomorphisme et on dispose de l'application $(\Lambda^d f)^{-1}$. 

On peut maintenant construire une application $g:Y \rightarrow X$  via
\[Y \cong  \Lambda^d  Y  \otimes (\Lambda^{d-1}  Y)^{\vee} \longrightarrow \Lambda^d  X  \otimes (\Lambda^{d-1}  X)^{\vee} \cong X \]
où le premier et le dernier isomorphisme viennent du lemme ci-dessous et l'application centrale est $(\Lambda^d f)^{-1} \otimes (\Lambda^{d-1}  f)^{\vee}.$
Par construction, $R_\ell(g)$ est un isomorphisme, on conclut alors par la Proposition \ref{prop conserv}.
\end{proof}
\begin{lem}(\cite[Lemma 3.2]{OS})
Soit $M$ un motif homologique dont la réalisation est concentrée en un degré pair et de dimension $d$, alors 
\[M \cong  \Lambda^d  M  \otimes (\Lambda^{d-1}  M)^{\vee} \]
\end{lem}
\begin{proof}
Le motif $\Lambda^d  M $ est   un facteur direct de $M \otimes \Lambda^{d-1}  M$. Ceci fournit deux applications entre $M$ et $\Lambda^d  M  \otimes (\Lambda^{d-1}  M)^{\vee}$ dans les deux directions. Leur réalisation est un isomorphisme, on conclut par la Proposition \ref{prop conserv}.
\end{proof}

\section{Positivité en caracteristique positive}\label{positivite}
  Dans cette section nous étudions la Conjecture de positivité \ref{Conj pos}. Le résultat principal dit que la conjecture est vérifiée pour les motifs de dimension $2$ et à réduction supersingulière (Théorème \ref{thm positif}). Nous expliquons ensuite comment appliquer ce résultat pour déduire la Conjecture standard de type Hodge \ref{CSTH} pour certaines variétés, par exemples les variétés abéliennes de dimension quatre. Puis nous discutons le rôle de l'hypothèse de dimension $2$.

   \begin{thm}\label{thm positif}  Soient $M$ un motif homologique sur un corps $k$ de caractéristique $p$ et 
 $q: \Sym^2 M \rightarrow \mathbbm{1} $
 un morphisme dans $\Mot(k)$. 
 Supposons que $q$   soit la réduction modulo $p$ d'une application
 $\tilde{q}: \Sym^2 \tilde{M} \rightarrow \mathbbm{1}$
 définie en caractéristique zéro. 
 
 Définissons  $q_Z$ comme la restriction de $q$ à toutes les classes algébriques $Z(M)=\Hom(\mathbbm{1}, M)$ de $M$ et  $q_B$ comme  la réalisation singulière de $\tilde{q}$.
 
 Supposons  que $q_B$  soit une polarisation et supposons  avoir $M\cong  \mathbbm{1}^{\oplus 2}$. Alors $q_Z$ est définie positive.
 \end{thm}

  \begin{rem} (Sur les hypothèses : relevabilité et rang $2$.)
  Il n'est pas rare d'avoir des motifs qui se relèvent à la caractéristique zéro.  Par exemple il est attendu que tout motif sur un corps fini se relève, car la conjecture de Tate prédit qu'un tel motif serait de type abélien. En général, même si un motif se relève, ses classes algébriques ne se relèveront pas à la caractéristique zéro, ce qui rend les résultats de positivité difficiles, puisqu'ils ne peuvent pas se déduire des propriétés de positivité des polarisation, voir Définition \ref{def pol pair}.

  L'hypothèse restrictive dans le théorème ci-dessus est la dimension deux. La façon d'utiliser ce résultat pour déduire la conjecture standard de type Hodge pour certaines variétés est la suivante. On décompose le motif d'une   variété donnée autant que possible. Certains facteurs ne posséderont pas de classes algébriques, d'autres en posséderont uniquement certaines pour lesquels la conjecture standard de type Hodge peut se déduire des cas connus : par exemple ce sont des classes qui se relèvent à la caractéristique zéro ou  qui sont  construites à partir de   diviseurs. Enfin, ils resteront parfois des facteurs qui possèdent des classes algébriques qui ne se ramènent pas à des cas connus. Le point est alors de trouver des variétés pour lesquels ces derniers facteurs sont de dimension deux. Un exemple est donné dans le corollaire ci-dessous.
      \end{rem}
\begin{cor}Soit $A$ une variété abélienne de dimension quatre définie sur un corps de caractéristique $p$. Alors
\begin{enumerate}
\item La Conjecture standard de type Hodge \ref{CSTH} est vraie pour $A$,
\item Le produit d'intersection
\[\CH^2(A)/{\num} \times \CH^2(A)/{\num}  \longrightarrow \Q\]
est de signature $(\rho_2 - \rho_1 +1; \rho_1 - 1)$, où
$\rho_n=\dim_\Q (\CH^2(A)/{\num}),$
\item  Il y a une infinité de nombres premiers $\ell\neq p$ pour lesquels l'équivalence numérique sur $A$ coïncide avec l'équivalence homologique pour la cohomologie $\ell$-adique.
\end{enumerate}
\end{cor}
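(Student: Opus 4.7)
Le plan est de d�duire d'abord (1), dont (2) et (3) suivront. Pour (1), je me ram�ne au cas o� $A$ est une vari�t� ab�lienne simple d�finie sur un corps fini $\F_q$, par sp�cialisation (l'injection $\CH^2(A)/\num \hookrightarrow \CH^2(A_{\F_q})/\num$ pr�serve le produit d'intersection, donc la d�finie positivit� descend de $A_{\F_q}$ � $A$) et par la Proposition \ref{mot var abel}. Sur $\F_q$, j'applique au motif $\mathfrak{h}^4(A)(2) \in \CHM(\F_q)$ la d�composition CM de l'Exemple \ref{decomp CM} et de la Remarque \ref{rem decomp}(\ref{decomp Q}). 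Celle-ci est orthogonale pour l'accouplement $q_{4,L}$ de la Proposition \ref{pol mot}, r�duisant (1) � la positivit� de $q_Z$ sur chaque facteur direct $M$.

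Selon la Remarque \ref{rem decomp}(\ref{decomp Q}), chaque $M$ tombe dans l'une des cat�gories (a), (b) ou (c). Pour (a), la positivit� d�coule de l'in�galit� de Hodge classique pour les diviseurs. Pour (c), aucune classe alg�brique n'existe, donc rien � prouver. Le cas essentiel est (b) : la Remarque \ref{rem decomp}(\ref{decomp Q}) garantit, par un argument combinatoire sp�cifique � la dimension quatre, que $M$ est de rang deux ; et comme le Frobenius agit trivialement sur sa r�alisation, le motif homologique sous-jacent est isomorphe � $\mathbbm{1}^{\oplus 2}$ (par conservativit�, Th�or�me \ref{thm conservatif}, et semisimplicit�, Th�or�me \ref{thm jannsen}). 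J'applique alors le Th�or�me \ref{thm positif} : le rel�vement de $M$ � la caract�ristique z�ro est fourni par le rel�vement canonique de Serre--Tate de $A$, compatible avec la d�composition CM ; et le rel�vement de $q_{4,L}$ a pour r�alisation singuli�re une polarisation (Proposition \ref{pol mot}, restreinte au facteur via la Proposition \ref{HS semisimple}). On en d�duit la positivit� de $q_Z|_M$, et (1) suit par somme orthogonale.

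Pour (2), la d�composition de Lefschetz
\[\CH^2(A)/\num = L\cdot \CH^1(A)/\num \oplus \CH^{2,\prim}(A)/\num\]
est orthogonale. Sur le premier facteur (dimension $\rho_1$), le produit d'intersection s'identifie � la forme $(D,D') \mapsto L^2 \cdot D \cdot D'$ sur $\CH^1(A)/\num$, de signature $(1, \rho_1-1)$ par l'in�galit� de Hodge pour les diviseurs. Sur la partie primitive (dimension $\rho_2-\rho_1$), le produit d'intersection co�ncide avec $q_Z$ sans changement de signe (calcul de signe sp�cifique � la dimension 4 codimension 2), donc est d�fini positif par (1). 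On obtient la signature globale $(\rho_2-\rho_1+1, \rho_1-1)$. Pour (3), il suffit d'appliquer le Th�or�me \ref{thm clozel} de Clozel au mod�le sur un corps fini obtenu par sp�cialisation.

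L'obstacle principal est l'application du Th�or�me \ref{thm positif} aux facteurs de type (b) : bien que le motif $M$ se rel�ve � la caract�ristique z�ro, les classes alg�briques individuelles ne se rel�vent pas en g�n�ral, de sorte que la positivit� en caract�ristique z�ro ne peut �tre transport�e directement. Cet obstacle rel�ve du Th�or�me \ref{thm positif} lui-m�me, dont la preuve repose sur des techniques de th�orie de Hodge $p$-adique (voir la discussion suivant la Conjecture \ref{Conj pos}).
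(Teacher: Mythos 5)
Your overall strategy mirrors the paper's proof exactly: specialize to a finite field, invoke the CM decomposition of Remark \ref{rem decomp}(\ref{decomp Q}), split the factors into types (a)/(b)/(c), apply Theorem \ref{thm positif} to the rank-two (b)-type factors, derive (2) from (1) via the primitive decomposition and Hodge index for divisors, and derive (3) from Clozel's Theorem \ref{thm clozel}. Your explicit signature computation in (2) is correct and fills in the paper's terse remark that (1) and (2) are ``en fait \'equivalents.'' The specialization step also holds: since specialization preserves intersection numbers, it is injective on numerical Chow groups, so definiteness descends.

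Two steps in your treatment of the (b)-type factors, however, need repair. First, the Serre--Tate canonical lift exists only for \emph{ordinary} abelian varieties, while the (b)-type pieces of $\mathfrak{h}^4(A)(2)$ are precisely the ``supersingular'' ones where the twisted Frobenius acts trivially. The relevant lifting to characteristic zero that feeds Theorem \ref{thm positif} is a CM lift in the sense of Honda--Tate (after isogeny and field extension), not Serre--Tate.

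Second, and more seriously, the claim that a rank-two factor $M$ of type (b) satisfies $M \cong \mathbbm{1}^{\oplus 2}$ ``par conservativit\'e et semisimplicit\'e'' is not justified. Conservativity can upgrade a map $\mathbbm{1}^{\oplus 2}\to M$ to an isomorphism once you have the map; but producing two independent algebraic classes in $M$ is exactly the content of the Tate conjecture for that factor, which is not known here. Unconditionally, a (b)-type factor may have no algebraic class at all (then it contributes nothing to $q_Z$ and can be discarded), or may have exactly one, in which case one must still argue that it is \emph{generated} by algebraic classes modulo numerical equivalence before Theorem \ref{thm positif} can be invoked. The paper flags this delicacy in the remark immediately after Theorem \ref{thm positif} (``parmi les facteurs de type (b) certains pourraient ne pas poss\'eder de classe alg\'ebrique\ldots Pour les autres on a besoin de montrer que d\`es qu'un facteur a une classe alg\'ebrique il est engendr\'e par des classes alg\'ebriques'') and explains that the resolution uses the CM action at the level of \emph{numerical} equivalence --- the analogue at homological equivalence would fail for the same reasons that Clozel's argument does not work for every $\ell$. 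Your proof omits this argument, which is the genuine crux of verifying the hypotheses of Theorem \ref{thm positif}.
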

 \begin{proof}
 Par un argument de spécialisation on peut supposer que le corps de définition est fini. On peut alors utiliser la décomposition de la Remarque \ref{rem decomp}\eqref{decomp Q}. Les facteurs qui sont a priori mystérieux pour la Conjecture standard de type Hodge \ref{CSTH} sont ceux de type (b), dans la notation de la même remarque.   Il se trouve  que tous ces facteurs de toutes les variétés abéliennes de dimension quatre sont bien de dimension deux. Ce fait est  un petit miracle combinatoire, voir \cite[\S 7]{Anc21} pour les détails ou la Remarque \ref{rem decomp}\eqref{decomp Q} pour un aperçu.
    
    Les points (1) et (2) sont en fait équivalents. Cette équivalence n'utilise pas le fait que $A$ est une variété abélienne mais uniquement la dimension quatre. Elle se déduit de la décomposition en parties primitives.
    
    Si le corps de définition est fini le point (3) est un cas particulier du Théorème \ref{thm clozel} de Clozel. Pour se ramener aux corps finis on   spécialise et on utilise la Proposition \ref{pol mot}.
       \end{proof}
    \begin{rem}\begin{enumerate}
    \item (Dimension supérieure.)
Pour les variétés abéliennes  de dimension quelconque on pourra encore utiliser la décomposition de la Remarque \ref{rem decomp}\eqref{decomp Q}. En général les facteurs (b) de la remarque    auront dimension plus grande que deux.  On peut tout de même trouver des exemples sporadiques pour lesquels ces facteurs de type (b) ont rang $2$ et déduire la conjecture standard de type Hodge à l'aide du Théorème \ref{thm positif}. Ceci a été récemment étudié par Koshikawa \cite{Koshi}.
\item (Supersingularité vs Frobenius invariant.)
On remarquera un petit décalage entre l'hypothèse de supersingularité $M\cong  \mathbbm{1}^{\oplus 2}$ du Théorème \ref{thm positif} et la caractérisation des facteurs (b). Tout d'abord remarquons que ces deux descriptions sont équivalentes sous la conjecture de Tate. 

Inconditionnellement, a priori, parmi les facteurs de type (b) certains pourraient ne pas posséder de classe algébrique : ces facteurs pourront être négligés à l'étude de la conjecture standard de type Hodge. 
Pour les autres on a besoin de montrer que dès qu'un facteur a une classe algébrique il est engendré par des classes algébriques. Ceci se montre en utilisant l'action CM mais l'argument nécessite de   travailler avec l'équivalence numérique. Si on travaillait avec l'équivalence homologique on se trouverait devant des problèmes similaires à ceux qui empêchent l'argument de Clozel du  Théorème \ref{thm clozel} de fonctionner pour tout nombre premier $\ell$. 
\end{enumerate}
 \end{rem}
    \begin{defin} (Symbole de Hilbert.)
    Soit $q$ une $\Q$-forme quadratique de dimension deux et $\nu=2,3,5,\ldots,\infty$ une place de $\Q$. On définit le symbole de Hilbert  $\varepsilon_\nu(q) $ de $q$ en $\nu$ comme étant $+1$ si $q(x,y)=z^2$ a une solution non-nulle dans la complétion $\Q_\nu$ et $-1$ sinon.
    \end{defin}
   
    \begin{prop}\label{prop hilb}
    Gardons les notations du Théorème \ref{thm positif} et soit $n$ l'unique entier tel que la réalisation singulière de $\tilde{M}$ soit une structure de Hodge de type $(-n,+n)$. Alors $q_Z$ est définie positive si et seulement si 
    \begin{align}\label{formule hilb}
    \varepsilon_p(q_Z) = (-1)^n \varepsilon_p(q_B),
    \end{align}
    ce qui est encore équivalent au fait que $q_Z\otimes \Q_p$ est isomorphe à  $q_B\otimes \Q_p$ si et seulement si $n$ est pair.
        \end{prop}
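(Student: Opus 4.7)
Le plan est d'exploiter les isomorphismes de comparaison entre les r\'ealisations et d'appliquer la formule du produit pour les symboles de Hilbert. D'abord, je commencerais par utiliser l'hypoth\`ese $M \cong \one^{\oplus 2}$ du Th\'eor\`eme \ref{thm positif}: pour chaque premier $\ell \neq p$, l'application classe de cycle $Z(M) \otimes_\Q \Q_\ell \to R_\ell(M)$ est un isomorphisme, et le th\'eor\`eme de comparaison d'Artin compos\'e avec le changement de base propre et lisse identifie $R_B(\tilde M) \otimes_\Q \Q_\ell \cong R_\ell(\tilde M) \cong R_\ell(M)$. Ces deux identifications \'etant compatibles avec les formes (elles envoient $q_Z$, respectivement $q_B$, sur la r\'ealisation $\ell$-adique de $q$), on obtient $q_Z \otimes \Q_\ell \cong q_B \otimes \Q_\ell$ sur $\Q_\ell$ pour tout $\ell \neq p$. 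On en d\'eduit l'\'egalit\'e des symboles de Hilbert $\varepsilon_\ell(q_Z) = \varepsilon_\ell(q_B)$ ainsi que celle des discriminants dans $\Q_\ell^*/(\Q_\ell^*)^2$ pour tout $\ell \neq p$.

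Par la formule du produit $\prod_\nu \varepsilon_\nu = 1$ appliqu\'ee \`a $q_Z$ et \`a $q_B$ puis en divisant, les contributions aux places $\ell \neq p$ disparaissent et il reste
\[ \varepsilon_p(q_Z) \varepsilon_\infty(q_Z) = \varepsilon_p(q_B) \varepsilon_\infty(q_B). \]
Puisque $q_B$ est une polarisation et $\tilde M_B$ est purement de type $(-n,n)$, la D\'efinition \ref{def pol pair} donne que $q_B \otimes \bR$ est $(-1)^n$-d\'efinie, d'o\`u $\varepsilon_\infty(q_B) = (-1)^n$. La relation ci-dessus entra\^ine alors que la condition $\varepsilon_p(q_Z) = (-1)^n \varepsilon_p(q_B)$ \'equivaut \`a $\varepsilon_\infty(q_Z) = +1$.

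L'obstacle principal consiste \`a exclure le cas o\`u $q_Z$ serait ind\'efinie (auquel cas on a aussi $\varepsilon_\infty(q_Z) = +1$ sans que $q_Z$ soit positive). Il s'agit de montrer que $\det(q_Z) \equiv \det(q_B) \pmod{(\Q^*)^2}$: en posant $r = \det(q_Z)/\det(q_B) \in \Q^*$, on sait d\'ej\`a que $r$ est un carr\'e dans $\Q_\ell$ pour tout $\ell \neq p$. Si $r$ n'\'etait pas un carr\'e rationnel, l'extension quadratique $\Q(\sqrt{r})/\Q$ serait non triviale et le th\'eor\`eme de Chebotarev fournirait un ensemble de premiers de densit\'e $1/2$ o\`u $r$ est non-carr\'e dans $\Q_\ell$; or cet ensemble est au plus $\{p\}$, d'o\`u une contradiction. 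Donc $r \in (\Q^*)^2$; comme $q_B$ est d\'efinie on a $\det(q_B) > 0$, d'o\`u $\det(q_Z) > 0$ et $q_Z$ est donc d\'efinie. C'est cette \'etape arithm\'etique globale qui constitue le point technique le plus d\'elicat.

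Les deux \'equivalences en d\'ecoulent. D'une part, $q_Z$ \'etant d\'efinie, $\varepsilon_\infty(q_Z) = +1$ \'equivaut \`a sa positivit\'e, ce qui donne la premi\`ere \'equivalence. D'autre part, l'\'egalit\'e des discriminants dans $\Q^*/(\Q^*)^2$ implique leur \'egalit\'e dans $\Q_p^*/(\Q_p^*)^2$; donc $q_Z \otimes \Q_p \cong q_B \otimes \Q_p$ \'equivaut \`a $\varepsilon_p(q_Z) = \varepsilon_p(q_B)$, et la condition << isomorphes si et seulement si $n$ est pair >> se r\'e\'ecrit exactement comme $\varepsilon_p(q_Z) = (-1)^n \varepsilon_p(q_B)$, ce qui donne la seconde \'equivalence.
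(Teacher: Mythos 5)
Your proof is correct and follows essentially the same route as the paper's: comparison at $\ell \neq p$ to equate local invariants there, the product formula for Hilbert symbols to transfer information from $\infty$ to $p$, the positivity of $q_B$ to pin down $\varepsilon_\infty(q_B)$ and the discriminant, and the rank-two fact that same discriminant and Hilbert symbol classify local forms. The only cosmetic difference is that you spell out, via Chebotarev, the fact that a rational number which is a square in all but one completion is a rational square, whereas the paper simply invokes this as known.
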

\begin{rem}
L'idée de la preuve de cette proposition a déjà été introduite dans l'Exemple \ref{K3 fermat} et la remarque qui le suit. Cette proposition est par ailleurs le point crucial où l'hypothèse de la dimension deux est nécessaire.
    \end{rem}
    \begin{proof}
    Tout d'abord on remarque que, pour tout nombre premier $\ell\neq p$, on a $q_B\otimes \Q_\ell   \cong q_Z\otimes \Q_\ell,$ c'est la combinaison du théorème de comparaison d'Artin et du changement de base propre et lisse en cohomologie $\ell$-adique. Cela implique en particulier que $\varepsilon_\ell(q_Z) = \varepsilon_\ell(q_B)$, mais aussi que le discriminant de $q_B$ et $q_Z$ coïncident dans $\Q^*/(\Q^*)^2$, car un nombre rationnel est un carré s'il l'est dans presque toute complétion.
    
D'autre part, en suivant la Définition \ref{def pol pair}, on a que $q_B$  est $(-1)^n$-définie positive. Cela implique en particulier que $\varepsilon_\infty(q_B) = (-1)^n$, et que le discriminant de $q_B$ est positif. On en déduit  que $q_Z$ a discriminant positif et donc,  puisqu'on est en rang deux, que $q_Z$ est définie positive ou définie négative. La positivité de $q_Z$ est   équivalente alors à $\varepsilon_\infty(q_Z)=+1$.

A l'aide de la formule du produit sur les symboles de Hilbert on a
\begin{align}
    \prod_\nu \varepsilon_\nu (q_Z) = 1 =  \prod_\nu \varepsilon_\nu (q_B).
    \end{align}
En simplifiant les facteurs $\ell$-adiques on  obtient
$\varepsilon_\infty(q_Z)\cdot \varepsilon_p(q_Z)=\varepsilon_\infty(q_B)\cdot \varepsilon_p(q_B)$ et donc
$\varepsilon_\infty(q_Z)\cdot \varepsilon_p(q_Z)=(-1)^n\cdot \varepsilon_p(q_B)$. On conclut que $\varepsilon_\infty(q_Z)=+1$ si et seulement si la formule \eqref{formule hilb} est satisfaite.

La dernière équivalence de la proposition suit du fait que deux $\Q_p$-formes quadratiques non-dégénérées de même rang   sont isomorphes si et seulement si elles ont le même discriminant et le même symbol de Hilbert.
\end{proof}    
 \begin{proof}[Démonstration du Théorème \ref{thm positif}]
   Par la proposition précédente on est ramené à  un problème purement $p$-adique. Ce dernier a en plus une  interprétation cohomologique qui permet de le traduire en  une question de théorie de Hodge $p$-adique. Pour expliquer cette traduction définissons  $V_{B,p}$  comme la réalisation étale $p$-adique de  $\tilde{M}$ et $V_{Z,p}$ comme la partie Frobenius invariante de la réalisation cristalline de $M$. Chacun de ces deux $\Q_p$-espaces vectoriels de dimension deux est muni d'une forme quadratique induite  respectivement par $\tilde{q}$ et $q$. Ces deux formes quadratiques ne sont rien d'autre que 
$q_B\otimes \Q_p$ et $q_Z\otimes \Q_p$. La première identification suit du   théorème de comparaison d'Artin. La deuxième vient du fait que la partie Frobenius invariante contient toujours l'espace engendré par les classes algébriques et dans ce cas cette inclusion est une égalité par dimension.
   
   Le théorème de comparaison $p$-adique montré  par Faltings fournit un isomorphisme
   \begin{align}\label{faltings}
   V_{B,p}  \otimes B_{\cris}  =  V_{Z,p} \otimes B_{\cris}  \end{align}
   fonctoriel, compatible à toutes les structures que l'on pourrait imaginer et en particulier avec les formes quadratiques $q_B$ et $q_Z$.
   On conclut à l'aide des deux phénomènes suivants. 
   
  (a) La matrice de changement de base  est calculable dans $\Mat_{2 \times 2}(B_{\cris} )$. Elle ne dépend que de l'entier $n$ de la filtration de Hodge et de l'algèbre  $\End(V_{B,p})$ des endomorphismes de $V_{B,p} $ comme représentation galoisienne.
     
   (b) La description de cette matrice est suffisante pour  déduire que $q_B\otimes \Q_p$ et $q_Z\otimes \Q_p$ sont isomorphes si et seulement si $n$ est pair.
\end{proof} 
On passe maintenant  à la description des deux phénomènes (a) et (b) de la fin de la preuve ci-dessus. Si le deuxième est élémentaire le premier est une propriété remarquable de la théorie de Hodge $p$-adique qui la différencie de la théorie de Hodge classique.   
\begin{exemple}\label{exem elem}
Supposons avoir deux $\R$-formes quadratiques définies et de dimension deux $q_1(x,y)=a_1 x^2 + b_1 y^2$ et $q_2(x,y)=a_2 x^2 + b_2 y^2$ et une identification $q_1\otimes_\R \C=q_2 \otimes_\R \C$. Supposons   savoir que la matrice de changement de base de l'identification est
 \[  \begin{pmatrix}
		i & 0\\
		0 & i
	\end{pmatrix}. \]
	On pourra alors en déduire que $q_1$ et $q_2$  ne sont pas isomorphes (une est définie négative et l'autre est définie positive). Les arguments au point (b) dans la preuve du Théorème \ref{thm positif} sont tout aussi élémentaires et ressemblent à cet exemple, avec $\R$ et $\C$ qui sont remplacés par $\Q_p$ et $B_{\cris} $.
\end{exemple}

\begin{rem} \label{rem faltings}(Théorème de comparaison $p$-adique vs   classique.)
Le théorème de comparaison $p$-adique 
   \begin{align}\label{compp}R_{p} \otimes  B_{\cris}  = R_{\cris}\otimes  B_{\cris}, \end{align}
dont \eqref{faltings} en est une instance,
 est souvent considéré comme l'analogue $p$-adique du théorème de comparaison entre cohomologie singulière et  cohomologie de de Rham algébrique
   \begin{align}\label{compC} R_{B} \otimes \C = R_{\dR}\otimes  \C \end{align}
   que l'on a discuté dans l'Exemple \ref{exemple periode}. En fait le théorème de comparaison $p$-adique a des avantages par rapport à sa version classique, que l'on liste ci-dessous. (C'est grâce à ces propriétés que l'on peut notamment calculer certaines matrices de changement de base  et déterminer des relations entre leurs entrées, cf. le point (a) de la preuve du Théorème \ref{thm positif}.)
   
   \
   
   Rappelons que les réalisations classiques d'un motif $M$  possèdent des structures supplémentaires. En particulier, $R_{\dR}(M)$ est munie d'une filtration, $R_{B}(M)$  est munie d'une structure de Hodge, $R_{p}(M)$ est munie de l'action du groupe de Galois d'un corps $p$-adique et $R_{\cris}(M)$ est un $\varphi$-module filtré, i.e. elle est munie d'une filtration et d'une action du Frobenius absolu $\varphi$. 
   
   Rappelons aussi que les coefficients des matrices qui apparaissent dans les comparaisons \eqref{compp} ou \eqref{compC} sont appelés périodes.
   
   \begin{enumerate}
   \item 
   L'anneau $B_{\cris} $ est muni des actions du Frobenius absolu $\varphi$ et du groupe de Galois absolu de $\Q_p$ ainsi que  d'une filtration\footnote{Certaines structures sont définies dans un plus gros anneau noté $B_\dR$. On ignore ici ce point pour simplifier l'exposition.}.
   La comparaison \eqref{compp} respecte ces trois structures. Quand on dispose d'objets cohomologiques suffisamment concrets, comme ceux de \eqref{faltings}, on peut explicitement décrire ces structures sur les périodes.
      
   Le corps $\C$ en revanche n'est pas muni de structures qui imiterait la filtration ou la structure de Hodge. Connaître la structure de Hodge sous-jacente à $R_{B}(M)$ n'aide pas à avoir des informations sur les périodes complexes du motif $M$.
   
   \item Dans \cite{Falt}, Faltings montre une équivalence de catégories entre certains $\varphi$-modules filtrés, dits admissibles, et certaines représentations de groupes de Galois  de corps $p$-adiques, dites cristallines. La condition d'admissibilité est toujours vérifiée par les modules d'origine géométrique, i.e. par les réalisations de motifs.   
   
    Cette équivalence est en plus compatible au théorème de comparaison \eqref{compp}. En particulier, des périodes dans $B_{\cris}$ qui ont un certain comportement par rapport à  $\varphi$ et à la filtration doivent correspondre à un unique $\varphi$-module filtré et donc aussi à une unique représentation de Galois.
   Autrement dit, les périodes $p$-adique associées à un motif donné sont  caractérisées par leur comportement par rapport à deux structures : Frobenius et filtration.

   Une structure de Hodge  est beaucoup plus riche qu'une filtration. Par exemple,  pour deux variétés d'une même famille, les espaces vectoriels filtrés correspondant seront  isomorphes alors que les structures de Hodge ne le seront pas, en général\footnote{Si on considère la courbe elliptique $E_t : y^2=x(x-1)(x-t)$ pour $t\in\Q-\{0,1\}$ et le motif $M_t=\mathfrak{h}^1(E)$ alors $R_\dR(M_t)$ est la donnée d'un $\Q$-espace vectoriel de dimension $2$ muni d'une droite à l'intérieur. En revanche  la structure de Hodge $R_B(M_t)$ détermine $E_t$ à isogénie près, notamment il y aura des structures de Hodge CM et d'autres qui ne le sont pas.}. On ne peut pas avoir une équivalence de catégories entre ces deux structures cohomologiques.
   
   (De façon informelle, le passage du cas complexe \eqref{compC} au  cas $p$-adique \eqref{compp} correspond à enrichir la structure de de Rham et à réduire celle de Hodge suffisamment pour avoir deux structures équivalentes.
En effet, la réalisation cristalline hérite la filtration de de Rham mais elle possède en plus l'action du Frobenius absolu $\varphi$.
D'autre part, d'après la conjecture de Mumford-Tate, une structure de Hodge est grosso-modo équivalente à une représentation de Galois d'un corps de nombres, or sur  $R_{p}(M)$ on ne  regarde que l'action d'un certain de ses sous-groupes de décomposition.)

\item  La condition d'admissibilité, discutée au point précédent, se trouve être relativement élémentaire à vérifier, grâce à \cite{CF}. Ceci permet de construire facilement des $\varphi$-modules filtrés admissibles et donc des matrices de périodes avec action de Frobenius et filtration prescrites. 

On ne dispose pas de méthode élémentaire de construction de périodes complexes. Il s'agit d'intégrales de formes différentielles algébriques qui sont souvent difficiles à calculer. Leurs relations sont prédites par la conjectures des périodes de Grothendieck et restent mystérieuses.

   \end{enumerate}
En résumant, le point (a) de la preuve du Théorème \ref{thm positif} est le calcul de la matrice de périodes associée à \eqref{faltings}. Ce calcul procède comme suit : on décrit l'action de $\varphi$ et la filtration sur ces pédiodes (point (1) ci-dessus), puis on montre que cette description  caractérise les périodes en question (point (2)), enfin on construit  de telles périodes (point (3)).
 \end{rem}
 
 \begin{exemple}\label{exem periode} (Un calcul de périodes.)
Soit $M$ un motif comme dans le Théorème \ref{thm positif} pour lequel on veut montrer la relation \eqref{formule hilb}. Cela passe par le calcul de la matrice de périodes de \eqref{faltings}. Ce calcul dépend de l'entier $n$ (défini dans la Proposition \ref{prop hilb}) et de  l'algèbre  $\End(V_{B,p})$ des endomorphismes de $V_{B,p} $ comme représentation galoisienne. 

Supposons $n=1$ : on doit alors montrer   que $q_Z\otimes \Q_p$ et $q_B\otimes \Q_p$ ne sont pas isomorphes, voir la Proposition \ref{prop hilb}. Supposons également que  $\End(V_{B,p})$ soit le corps $\Q_{p^2}$, l'unique extension non ramifiée de degré $2$ de $\Q_p$. Comme $B_{\cris}$ contient toutes les extensions non ramifiées, on peut alors écrire \eqref{faltings} comme
\begin{align}\label{nonso}   (V_{B,p}  \otimes_{\Q_p} \Q_{p^2} )  \otimes_{\Q_{p^2}}  B_{\cris}  =  (V_{Z,p}  \otimes_{\Q_p} \Q_{p^2}) \otimes_{\Q_{p^2}} B_{\cris} .\end{align}
L'action de $\End(V_{B,p})=\Q_{p^2}$ sur $V_{B,p}$ décompose $ V_{B,p}  \otimes_{\Q_p} \Q_{p^2} $ en deux droites échangées par le groupe de Galois $\Gal (\Q_{p^2}/\Q_p). $ En particulier on peut choisir deux vecteurs $v_B$ et $w_B$ échangés par le groupe de Galois et appartenant à   ces droites.

 L'algèbre $\Q_{p^2}$ agit également sur $V_{Z,p}$, grâce à l'équivalence de catégories de Faltings, voir aussi le point (2) de la Remarque \ref{rem faltings}. On peut alors construire $v_Z$ et $w_Z$ de façon analogue. De plus, comme \eqref{faltings} est compatible à cette action, il existe deux périodes $\alpha,\beta\in B_{\cris}$ telles que
\[\alpha v_B =  v_Z \hspace{0.5cm} \textrm{et}  \hspace{0.5cm} \beta v_B =  v_Z .\]
Je prétends qu'elles satisfont aux relations 
\begin{align}\label{rel1}\varphi({\alpha})=\beta \hspace{0.5cm} \textrm{et}  \hspace{0.5cm}\varphi({\beta})=  \alpha  \end{align}
ainsi que
\begin{align}\label{rel2}\alpha\in\Fil^1-\Fil^2  \hspace{0.5cm} \textrm{et}  \hspace{0.5cm} \beta\in\Fil^{-1}-\Fil^0.   \end{align}
En effet $\Gal (\Q_{p^2}/\Q_p)$ est engendré par le Frobenius et d'autre part l'action du Frobenius sur $V_{B,p}$ et $V_{Z,p}$ est triviale par définition.
Quant à la relation sur la filtration, on la déduit du fait que les droites propres que l'on a construites doivent être isotropes, or la droite définie par la filtration sur  la réalisation cristalline de $M$ doit aussi l'être, en particulier elle doit coïncider avec une de ces droites propres (après extension des scalaires). 

Les relations \eqref{rel1} et \eqref{rel2} sont un exemple du principe (1) expliqué dans la Remarque \ref{rem faltings}.

\

Considérons maintenant le $\Q_{p^2}$ espace vectoriel
\[P=\{\gamma \in B_{\cris}, \hspace{0.5cm} \varphi^2({\gamma})=\gamma, \hspace{0.5cm}   
\gamma\in\Fil^1-\Fil^2  \hspace{0.5cm} \textrm{et}  \hspace{0.5cm} \varphi({\gamma})\in\Fil^{-1}-\Fil^0  \}  .\]
Je prétends qu'il a dimension $1$, autrement dit que n'importe quel élément de $P$ est en fait une période de $M$ construite ci-dessus. Ceci est un exemple du principe (2) expliqué dans la Remarque \ref{rem faltings}.

Pour le montrer remarquons d'abord qu'une période  $\alpha$ de $M$ est inversible dans $B_{\cris}$. On peut alors considérer l'espace $P/\alpha\subset B_{\cris}$, il correspondra à
\[\alpha^{-1}P=\{\lambda \in B_{\cris}, \hspace{0.5cm} \varphi^2({\lambda})=\lambda, \hspace{0.5cm}   
\lambda\in\Fil^0-\Fil^{-1}  \hspace{0.5cm} \textrm{et}  \hspace{0.5cm} \varphi({\lambda})\in\Fil^{0}-\Fil^{-1}  \}  .\]
Or cet espace est $\Q_{p^2}\subset B_{\cris}$ par \cite[Théorème 5.3.7]{Fontp}.

\

Pour conclure construisons  une période $t$ avec les propriétés
\[t,\varphi(t)\in B^*_{\cris}, \hspace{0.5cm} \varphi^2(t)=p\cdot t  , \hspace{0.5cm}   
t\in\Fil^1-\Fil^2  \hspace{0.5cm} \textrm{et}  \hspace{0.5cm} \varphi(t)\in\Fil^{0}-\Fil^1   .\]
De ces propriétés on déduit que $\alpha=t/\varphi(t)\in P$, autrement dit $\alpha$ est une période du motif $M$,  et $\alpha\cdot\varphi(\alpha)=1/p$. Cette dernière relation implique   que $q_Z\otimes \Q_p$ et $q_B\otimes \Q_p$ ne sont pas isomorphes par un calcul élémentaire qui est analogue à celui de l'Exemple \ref{exem elem}.

La construction de $t$ suit le principe (3) expliqué dans la Remarque \ref{rem faltings}. Considérons le $\varphi$-module filtré   $N=\Q_p^2$ muni du Frobenius
\[\varphi= \begin{pmatrix}
		0 & 1/p\\
		1 & 0
	\end{pmatrix} \]
et de la filtration 
\[ \Fil^{-1}=N, \Fil^0 = \Q_p\cdot e_2, \Fil^1=0.\]
On vérifie   que c'est un $\varphi$-module admissible,  donc il existe une représentation de Galois $V$ qui lui correspond par l'équivalence de catégorie de Faltings et qui donne une comparaison $V\otimes B_{\cris} = N\otimes B_{\cris} = B_{\cris}^2.$ Puisque cette identification est compatible à toutes les structures, on voit que les vecteurs de $V\subset B_{\cris}^2$ sont exactement de la forme $(t,\varphi(t))$ où $t$ satisfait aux propriétés voulues.

\end{exemple}

\begin{rem} (Généralisations possibles.)
La partie $p$-adique de l'argument présenté se généralise aux motifs de dimension plus grande : les principes généraux expliqués dans la Remarque \ref{rem faltings} restent valables, les calculs de l'Exemple \ref{exem periode} deviennent plus compliqués mais peuvent être traités. Dans un travail avec Adriano Marmora \cite{AM} nous avons pu en déduire une généralisation de la formule \eqref{formule hilb}. En suivant l'argument de la Proposition \ref{prop hilb} cela donne la positivité du symbole de Hilbert à l'infini $\varepsilon_\infty(q_Z)=+1.$ Malheureusement cette information ne suffit pas à déduire que $q_Z$ est défini positive, c'est le point crucial où l'on utilisait l'hypothèse de dimension $2$. 

Pour passer à la dimension supérieure il faudrait trouver un invariant défini en toute place, tel que la place à l'infini soit contrôlée par toutes les places finies et d'autre part tel que  l'invariant à l'infini détermine toute la signature. Une tentative pourrait passer par la cohomologie galoisienne : les $k$-formes quadratiques non dégénérées et de rang donné sont en bijection avec  $H^1(k,O)$ où $O$ est un $k$-groupe orthogonal de rang convenable. L'application  
\[H^1(\Q,O)\longrightarrow \bigoplus_\nu H^1(\Q_\nu,O).\]
  est injective, c'est le théorème d'Hasse--Minkowski. Son défaut de surjectivité est justement contrôlé par la formule du produit des symboles de Hilbert.  Le problème déjà soulevé se reformule alors ainsi : l'application
\[H^1(\Q,O)\longrightarrow \bigoplus_{\nu\neq \infty} H^1(\Q_\nu,O)\]
n'est plus injective.

On peut alors essayer d'exploiter plus d'informations géométriques de notre situation et faire surgir des groupes plus petits. Par exemple les motifs qui apparaissent dans le problème sont munis non seulement d'une forme quadratique mais aussi de l'action d'un corps CM. Ajouter cette donnée au problème correspond à étudier la cohomologie galoisienne d'un  tore maximal $T$ du groupe orthogonal $O$.
On dispose encore d'un principe local-global : l'application
\[H^1(\Q,T)\longrightarrow \bigoplus_\nu H^1(\Q_\nu,T).\]
est injective. Le point crucial serait alors d'avoir l'injectivité aussi de l'application
\[H^1(\Q,T)\longrightarrow \bigoplus_{\nu\neq \infty} H^1(\Q_\nu,T).\]
Malheureusement elle n'est pas injective   : on peut calculer  son noyau  à l'aide de la suite exacte de Poitou--Tate.
   \end{rem}

    \section{Périodes $p$-adiques à la André}\label{section andre}

Dans cette section nous présentons un travail en collaboration avec Dragos Fratila. Les motivations sont d'origine géométrique - l'étude des classes algébriques en caractéristique $p$ - mais le résultat final est plutôt arithmétique : on construit une algèbre de périodes $p$-adiques ainsi qu'un cadre tannakien pour l'étudier.

Des telles périodes devraient avoir des analogies avec les périodes complexes que l'on a rencontré dans  l'Exemple \ref{exemple periode}. Les périodes $p$-adiques de Fontaine possèdent des propriétés cohomologiques analogues à celles des périodes complexes et même plus fortes (Remarque \ref{rem faltings}). Par contre les propriétés arithmétiques des périodes complexes, comme leur transcendence ou leur  lien avec les fonctions spéciales,  n'ont pas de bon analogue dans les périodes $p$-adiques de Fontaine \cite{Betti}.  

Pendant que notre travail avançait nous avons découvert qu'André avait tissé des liens similaires \cite{Andre1,Andrep}. Son travail nous a été utile pour raffiner notre étude et notamment pour formuler la condition de ramification (Définition \ref{def ram}).

\subsection*{Motivation} Le point de départ vient d'une remarque de Tate : la conjecture de Tate prédit l'existence de classes algébriques mais elle ne prédit pas quelle classe cohomologique est algébrique  \cite[Aside 6.5]{MilneTate}. Une façon d'interpréter cette remarque est que l'on a une description du $\Q_\ell$-espace vectoriel engendré par les classes algébrique mais on n'a pas de description du $\Q$-espace vectoriel engendré par ces dernières. C'est un point délicat qui est présent dès le travail de Tate sur la conjecture de Tate pour les diviseurs sur les variétés abéliennes sur un corps fini \cite{Tate}, et plus récemment dans le travail de Charles sur la conjecture de Tate pour les diviseurs sur les surfaces K3 \cite{Charles}.

Un exemple élémentaire qui illustre cette subtilité est le suivant : il existe des $\Q_\ell$-droites dans la  cohomologie $\ell$-adique d'une variété $X$, disons définie sur un corps fini, qui sont Galois invariantes et pourtant elles ne contiennent pas d'élément du $\Q$-espace vectoriel $\Im \cl_X$. Pour construire de tels exemples prenons  $X$   une surface abélienne, ou une K3, et fixons deux classes de diviseurs $\alpha$ et $\beta$ linéairement indépendantes.  Prenons maintenant une constante $c \in \Q_\ell-\Q$. Alors la droite engendrée par $\alpha+c \beta$ convient. Pour le montrer on peut utiliser le produit d'intersection et le fait qu'il est défini à coefficients rationnels.

\subsection*{Un échec : cas $\ell$-adique} Faute de savoir décrire le $\Q$-espace vectoriel des classes algébriques, un premier pas est de le comparer   à un autre $\Q$-espace vectoriel. C'est notamment ce que l'on a fait dans l'Exemple \ref{K3 fermat} et la remarque qui le suit.

Prenons une variété $X_p$ définie sur $\overline{\mathbb{F}}_p$ et supposons qu'elle se relève à une variété $X$ définie sur $\overline{\Q}$. Au moyen d'un plongement $\sigma : \overline{\Q} \hookrightarrow \C$ on dispose d'une identification
\begin{align}\label{compartin}H^*_B(X(\C),\Q)\otimes \Q_\ell = H^*_\ell(X_p) \end{align}
induite par le théorème de comparaison d'Artin et le changement de base propre et lisse. Sous cette identification on peut étudier la position du $\Q$-espace vectoriel $Z_p$ des classes algébriques sur $X_p$ par rapport à $H^*_B(X(\C),\Q)$. Le premier fait que l'on remarque est que l'intersection $Z_p \cap H^*_B(X(\C),\Q)$ contient le $\Q$-espace vectoriel $Z_0$ des classes algébriques sur $X$. Inspiré par les différentes versions de la conjectures des périodes de Grothendieck on peut se demander si cette inclusion est en fait une égalité. 

Après avoir montré que cette question a réponse affirmative dans certains cas, nous avons compris que la réponse est négative en général. Les cas affirmatifs sont les surfaces à rang de Picard maximal - par une méthode similaire celle présentée dans l'Exemple \ref{K3 fermat} - et les variétés abéliennes CM \cite[\S 10]{AF}. Il est possible de construire des contre-exemples avec le carré d'une courbe elliptique non CM. L'argument suit en fait la technique qui a permis à André de montrer que l'analogue de la conjecture des périodes de Grothendieck est fausse pour le théorème de comparaison $p$-adique. En effet \eqref{compartin} dépend du choix de $\sigma$. On peut faire varier $\sigma$ en utilisant le groupe de Galois absolu du corps  de nombre sur lequel $X$ est défini. Dans certains cas on sait  que l'action de ce groupe de Galois sur $H^*_\ell(X_p)$ est hautement non triviale \cite{SerreGal}, ce qui permet faire varier le $\Q$-espace vectoriel $H^*_B(X(\C),\Q)$ et notamment de le faire rencontrer $Z_p $ de façon inattendue.

\subsection*{Cas $p$-adique}
Fixons une fois pour toutes un plongement $\overline{\Q} \subset \overline{\Q}_p$. Prenons comme auparavant une variété $X$ définie sur $\overline{\Q}$ à bonne réduction et  notons $X_p$ sa réduction. On dispose de la comparaison entre cohomologie de de Rham et cohomologie cristalline
\[H^*_{\dR}(X ,\overline{\Q})\otimes \overline{\Q}_p = H^*_{\cris}(X_p,\overline{\Q}_p) \]
due à Berthelot.

Notons  par $Z_p $ le $\Q$-espace vectoriel  des classes algébriques sur $X_p$ et par $Z_0 $ le $\Q$-espace vectoriel  des classes algébriques sur $X.$ On a comme dans le cas $\ell$-adique l'inclusion 
\[Z_0 \subset Z_p \cap H^*_{\dR}(X ,\overline{\Q}) \]   et on peut encore une fois se demander si cette inclusion est en fait une égalité. 
\begin{conj}\label{conj pGPCw}($p$GPCw :  Analogue $p$-adique de la version faible de la conjecture  des périodes de Grothendieck.)

\noindent Est-ce que l'inclusion $Z_0 \subset Z_p \cap H^*_{\dR}(X ,\overline{\Q})$ est une égalité ?
\end{conj}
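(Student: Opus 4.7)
J'aborderais la Conjecture \ref{conj pGPCw} en commen\c{c}ant par des cas tractables o\`u la structure suppl\'ementaire du motif permet un contr\^ole fin. Le premier candidat est celui d'une vari\'et\'e ab\'elienne CM $A$ \`a bonne r\'eduction en $p$. La d\'ecomposition motivique de l'Exemple \ref{decomp CM} d\'ecompose $\mathfrak{h}^{2n}(A)(n)$ en facteurs dont les r\'ealisations sont des droites propres sous l'action du corps CM, explicitement d\'ecrites par les valeurs propres du Frobenius (Remarque \ref{rem decomp}\eqref{decomp C}). Une classe dans $Z_p \cap H^*_{\dR}(A ,\overline{\Q})$ se d\'ecompose alors en composantes sur ces droites : celles de type (a) dans la Remarque \ref{rem decomp}\eqref{decomp Q} sont caract\'eris\'ees par \eqref{darienzo} et se rel\`event automatiquement \`a $A$, tandis que les droites de type (b) sont celles qui posent probl\`eme. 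L'id\'ee serait de contr\^oler, pour chacune de ces derni\`eres droites, le sous-corps de $\overline{\Q}_p$ qui contient sa $\Q$-structure alg\'ebrique et de le comparer \`a celui qui contient sa $\Q$-structure de de Rham, par un argument d'action galoisienne analogue \`a celui de la preuve du Th\'eor\`eme \ref{thm clozel} de Clozel.

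Pour une approche g\'en\'erale, l'ingr\'edient clef est que l'inclusion $\overline{\Q}\subset \overline{\Q}_p$ \'etant fix\'ee, on perd la variation galoisienne qui faisait \'echouer la version $\ell$-adique. J'utiliserais la th\'eorie de Hodge $p$-adique dans l'esprit de la Section \ref{positivite} : via la comparaison de Berthelot, une classe $\gamma \in Z_p$ correspond \`a un vecteur Frobenius-invariant dans $\Fil^0$ de la r\'ealisation cristalline, et l'hypoth\`ese $\gamma \in H^*_{\dR}(X ,\overline{\Q})$ ajoute une contrainte de rationalit\'e de de Rham. La rigidit\'e des $\varphi$-modules filtr\'es admissibles \`a la Faltings (Remarque \ref{rem faltings}) permettrait alors d'esp\'erer caract\'eriser $\gamma$ par ces donn\'ees et de montrer qu'elle correspond \`a un \'el\'ement de $H^*_B(X(\C),\Q)$ via un plongement $\overline{\Q}\hookrightarrow\C$, auquel cas les cas connus de la conjecture de Hodge (notamment pour les diviseurs) la ram\`eneraient \`a $Z_0$.

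L'obstacle principal est clair : sans invoquer des cas significatifs des conjectures motiviques (Tate, Hodge, Mumford--Tate), on ne sait pas construire des rel\`evements d'une classe alg\'ebrique de $X_p$ en une classe alg\'ebrique de $X$. Par ailleurs, l'\'echec en $\ell$-adique fait craindre que l'\'enonc\'e naturel ne soit pas exactement le bon et qu'une condition de ramification sur la repr\'esentation galoisienne sous-jacente doive \^etre impos\'ee, comme sugg\'er\'e par la r\'ef\'erence de l'auteur au travail d'Andr\'e. Une preuve compl\`ete en toute g\'en\'eralit\'e me semble donc passer \`a la fois par un raffinement de l'\'enonc\'e et par des avanc\'ees substantielles sur ces conjectures classiques ; dans l'imm\'ediat, le r\'esultat le plus accessible serait de traiter s\'epar\'ement le cas CM et celui des surfaces \`a rang de Picard maximal.
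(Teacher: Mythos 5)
L'\'enonc\'e \'etiquet\'e Conjecture \ref{conj pGPCw} n'est pas d\'emontr\'e dans le texte : c'est une conjecture, et ton analyse reconna\^it honn\^etement qu'une preuve en toute g\'en\'eralit\'e passerait par des avanc\'ees substantielles sur les conjectures motiviques. Tu identifies correctement deux points que le texte fait aussi : la n\'ecessit\'e d'une condition de ramification pour que l'\'enonc\'e soit raisonnable (reprise dans la D\'efinition \ref{def ram}) et les deux cas traitables, \`a savoir les surfaces \`a rang de Picard maximal et les vari\'et\'es ab\'eliennes CM~\cite[\S 10]{AF}. Sur ces points, ta proposition est en accord avec le texte.

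En revanche, la strat\'egie g\'en\'erale esquiss\'ee au deuxi\`eme paragraphe n'est pas celle du texte et souffre d'une tension interne. Tu proposes, via la comparaison $p$-adique et la rigidit\'e des $\varphi$-modules filtr\'es, de caract\'eriser une classe $\gamma \in Z_p \cap H^*_{\dR}(X,\overline{\Q})$ puis de montrer qu'elle correspond \`a un \'el\'ement de $H^*_B(X(\C),\Q)$ au moyen d'un plongement $\overline{\Q}\hookrightarrow\C$, pour finalement invoquer la conjecture de Hodge. Or c'est pr\'ecis\'ement ce genre de d\'etour qui fait \'echouer la version $\ell$-adique : passer par Betti r\'eintroduit un choix de $\sigma : \overline{\Q}\hookrightarrow\C$ et l'action du groupe de Galois absolu, alors que l'atout du cadre $p$-adique --- que tu notes toi-m\^eme en d\'ebut de paragraphe --- est justement que l'inclusion $\overline{\Q}\subset\overline{\Q}_p$ est fig\'ee. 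De plus, ton plan ne fait aucune place au dispositif tannakien que le texte d\'eveloppe autour de cette conjecture : r\'ealiser les p\'eriodes $p$-adiques \`a la Andr\'e comme les coordonn\'ees d'un $\overline{\Q}_p$-point d'un espace homog\`ene $H(M)$ sous $G_{\dR}(M)$, ce qui fournit la borne de transcendance du Th\'eor\`eme \ref{thm AF1} (\`a savoir $\degtr\mathcal{P}_p(M)\leq\dim G_{\dR}(M)-\dim G_{\cris}(M_p)$), puis formuler la version forte (Conjecture \ref{CPGf}) dont on montre qu'elle implique ($p$GPCw). Ce cadre --- o\`u la difficult\'e essentielle, que $Z_p$ ne soit pas un foncteur fibre, impose de remplacer $\Isom^\otimes$ par $\Emb^\otimes$ --- est la contribution centrale du travail sur cette conjecture et manque enti\`erement dans ta proposition.
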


Pour rendre cette question raisonnable il est nécessaire d'imposer une condition de ramification que l'on discutera plus tard et que l'on ignore pour l'instant  (Définition \ref{def ram}). Cette question apparaît comme l'analogue $p$-adique de la version faible de la conjecture des périodes de Grothendieck  (appelée parfois conjecture de de Rham--Betti \cite[\S 7]{Andmot}). La remarque ci dessous fait le lien entre cette conjecture et  différentes conjectures classiques sur les cycles algébriques.

Tout comme son pendant classique, cette conjecture prédit de la transcendence. En effet elle prédit qu'une classe algébrique en caractéristique $p$ qui n'est pas relevable à la caractéristique zéro ne peut pas être dans $H^*_{\dR}(X ,\overline{\Q}) $, autrement dit, au moins une de ses coordonnées par rapport à une base de $H^*_{\dR}(X ,\overline{\Q}) $ doit être transcendante. 
 La version forte de la conjecture de Grothendieck $p$-adique prédira de façon précise le degré de transcendance de toutes ces coordonnées (Conjecture \ref{CPGf}).
\begin{rem}\label{CPGw} ($p$GPCw vs conjectures classiques.)
Comparons maintenant la question qui a été soulevée au paragraphe précédent, notée  ($p$GPCw), avec trois conjectures classiques que nous rappelons de façon informelle  (voir \cite[\S 7]{Andmot} pour plus de détails).
Ces trois conjectures sont la conjecture de Hodge (HC), la version faible de la conjecture des périodes de Grothendieck (GPCw) et la conjecture de Hodge variationnelle   $p$-adique de Fontaine et Messing   ($p$HC).    
	
\begin{itemize}
	\item[(HC)] Une classe rationnelle en cohomologie singulière est algébrique si et seulement si elle appartient au bon degré de la filtration de de Rham.
		\item[(GPCw)] Une classe rationnelle en cohomologie de de Rham est algébrique si et seulement si elle est rationnelle pour la  cohomologie singulière
			\item[($p$HC)] Une classe algébrique en cohomologie cristalline se relève à la caractéristique zéro si et seulement si elle appartient au bon degré de la filtration de de Rham.
\end{itemize}
De façon informelle, on peut voir  ($p$GPCw) comme    \og le produit fibré de ($p$HC) et (GPCw) au-dessus de (HC) \fg.
\[\begin{matrix}
(p\textrm{GPCw}) & \rightarrow & (\textrm{GPCw}) \\
\downarrow &  & \downarrow \\
($p$ \textrm{HC}) & \rightarrow & (\textrm{HC})
\end{matrix}\]
La partie droite du diagramme concerne la caractéristique zéro, celle de gauche la caractéristique mixte. La   conjectures du  bas  comparent une structure rationnelle et une filtration, celles  du haut comparent deux structures rationnelles.
\end{rem}
\begin{defin}\label{defperp}(Périodes $p$-adiques à la André.)
Soit $M\in \Mot(\overline{\Q} ) $ un motif homologique\footnote{Tout comme dans le cas classique on travaillera qu'avec des motifs vérifiant $\hom=\num$, par exemple les motifs issues de produits de courbes elliptiques : ceci est nécessaire pour avoir des catégories tannakiennes.}   à bonne réduction et soit $M_p \in \Mot(\overline{\mathbb{F}}_p) $  sa réduction. Notons leurs classes algébriques par
\[Z_0(M)=\Hom_{\Mot(\overline{\Q})}(\mathbbm{1},M) \hspace{0.5cm} \textrm{et} \hspace{0.5cm}Z_p(M)=\Hom_{\Mot(\overline{\mathbb{F}}_p)}(\mathbbm{1},M_p).\]
Considérons le théorème de comparaison de Berthelot
\begin{align}\label{compbert}R_{\dR}(M)\otimes_{\overline{\Q}} \overline{\Q}_p = R_{\cris}(M_p).\end{align}

Pour tout choix de base $\mathcal{B}$ de $Z_p(M)$ et $\mathcal{B}'$ de $R_{\dR}(M)$ définissons $\Mat_{\mathcal{B},\mathcal{B}'}(M)$ comme la matrice ayant comme vecteurs colonnes les coordonnées de $\mathcal{B}$ par rapport à  $\mathcal{B}'$. Nous appelons les coefficients de cette matrice les périodes $p$-adique d'André de $M$  et définissons $\mathcal{P}_p(M)\subset  \overline{\Q}_p$ comme la $\overline{\Q}$-algèbre engendrée par ces périodes.
\end{defin}
\begin{rem}\label{rem mattia}(Périodes classiques vs périodes $p$-adiques à la André.)
\begin{enumerate}
\item  Si un élément de  $\mathcal{B}$ est une classe algébrique qui n'est pas relevable à la caractéristique zéro   au moins une de ses  périodes devrait être transcendante par la Conjecture  \ref{conj pGPCw}.
\item La matrice  $\Mat_{\mathcal{B},\mathcal{B}'}(M)$ dépend bien du choix des bases $\mathcal{B}$ et  $\mathcal{B}'$, par contre l'algèbre $\mathcal{P}_p(M)$ n'en dépend pas.
\item Pour que l'espace $Z_p(M)$ ne soit pas réduit à zéro il faut que $M$ contienne des facteurs directs de poids zéro. Il faut typiquement imaginer $M=\mathfrak{h}^{2n}(X)(n)$ pour une variété $X$ à bonne réduction. De plus, pour avoir des périodes intéressantes, il faut que $M$ admette des classes algébriques modulo $p$ qui ne sont pas relevables, sinon toutes les périodes $p$-adiques seraient algébriques.  
\item  Contrairement au cas classique, la matrice de périodes $p$-adique n'est pas carrée, en effet   l'inégalité\footnote{Point technique : cette inégalité a encore besoin de l'hypothèse $\hom=\num$. On utilise le fait que   l'équivalence numérique commute à l'extension des scalaires, voir la Conjecture \ref{conj nonso} et la remarque qui la suit.} $\# \mathcal{B}' \leq \#\mathcal{B}$ est stricte en général.
\item Il est possible de définir les périodes $p$-adiques pour les motifs mixtes. Il faut dans ce cas considérer uniquement les motifs mixtes vérifiant \eqref{compbert} - cette relation n'est automatique pour les variétés ouverte.
\end{enumerate}
\end{rem}  

\begin{exemple}\label{exem periodes p}
\begin{enumerate}
\item (Courbes elliptiques CM et valeurs Gamma.) Considérons $E$ une courbe elliptique CM. Les périodes complexes de son $\mathfrak{h}^1(E)$ sont un produit de certaines valeurs spéciales de la fonction gamma $\Gamma_\C$ en certains rationnels explicites dépendant uniquement du corps CM. Il n'y a pas de période $p$-adique associée à $\mathfrak{h}^1(E)$, puisqu'il n'y a pas de classe algébrique dans le $\mathfrak{h}^1(E)$, par contre on peut considérer le motif  $M=\mathfrak{h}^1(E) \otimes \mathfrak{h}^1(E)^{\vee} $. Ses classes algébriques correspondent aux endomorphismes de $E$. Pour avoir des périodes $p$-adiques intéressantes considérons un premier $p$ à réduction supersingulière, autrement tous les endomorphismes se relèveraient et toutes les périodes seraient algébriques. Il s'agit de décrire l'action de ces endomorphismes par rapport à une base de $H_\dR^1(E,\overline{\Q})$. Ce genre de calculs a  été traités par Coleman et Ogus \cite{Colem,Ogus}. On y voit apparaître des produits de valeurs spéciales de la fonction gamma $p$-adique  $\Gamma_p$ en des rationnels.

\item (Motifs de Kummer et logarithme.) Considérons le motif de Tate mixte de type Kummer $K_a=\mathfrak{h}^1(\mathbb{G}_m,\{1,a\})^{\vee}$, avec $a\in \Q$. Ce motif s'insère dans une suite exacte
\[0\longrightarrow \mathbbm{1}(+1) \longrightarrow  K_a   \longrightarrow   \mathbbm{1}    \longrightarrow   0\]
qui est non scindée pour $a\neq 0,1,-1$. En particulier ce motif n'a pas de classe algébrique non nulle, qui est la raison d'avoir considéré $K_a $ et non pas son dual $ \mathfrak{h}^1(\mathbb{G}_m,\{1,a\}) $.
Sa matrice de périodes complexes est
\[  \begin{pmatrix}
		2\pi i & \log(a)\\
		0 & 1
	\end{pmatrix}.\]

  Fixons un nombre premier $p$.  Pour   $a\not\equiv 0,1 \hspace{0.2cm}[p]$,  le motif $K_a$ a bonne réduction. De plus les motifs de Tate sur un corps fini forment une catégorie semisimple, en particulier la suite exacte ci-dessus se scinde modulo $p$. On en déduit que le motif possède une classe algébrique non nulle modulo $p$ qui est donc non relevable. Sa matrice de périodes $p$-adiques est 
  \[  \begin{pmatrix}
		  \log_p(a)\\
		  1
	\end{pmatrix} \]
		dont l'analogie avec son pendant complexe est encore une fois frappante. Cette matrice s'obtient à partir du calcul de la matrice du Frobenius agissant sur $R_\dR(K_a)$ qui est dû à Deligne \cite[\S 2.9]{Delignelog}. On y voit apparaître le  $\log_p(a^{1-p})$ et on trouve curieux que le passage de la matrice de Frobenius à la matrice de périodes corrige cet exposant.  (La correction de $\log_p(a^{1-p})$ à $\log_p(a)$  aurait pu s'obtenir en changeant de base, or pour ces motifs on dispose de bases canoniques et toutes les matrices décrites ci-dessus utilisent uniquement ces bases).
\item  (Fonctions hypergéométriques.) Soient $M$ et $N$ deux motifs non isomorphes mais dont les réductions modulo $p$ le sont. Alors le motif $M\otimes N^{\vee}$ a une classe algébrique modulo $p$ non relevable qui est justement associée à cet isomorphisme. Ses périodes $p$-adiques sont les coordonées de $R_\dR(N)$ par rapport à $R_\dR(M)$. 

Par exemple on peut considérer $M=\mathfrak{h}^1(E)$ et $N=\mathfrak{h}^1(E')$ où $E$ et $E'$ sont deux relèvements non isogènes d'une courbe elliptique ordinaire sur un corps fini. On peut notamment choisir $E$ comme le relèvement canonique de Serre--Tate et $E'$ comme une courbe elliptique non CM. Les périodes $p$-adiques qui apparaissent dans ce cas là sont décrites par Katz \cite{Katz}. On y voit notamment apparaître des valeurs spéciales de fonctions hypergéométriques.

\item (Matrice du Frobenius.) Soit $f\in \End(M_p)$ un endomorphisme modulo $p$. On peut considérer la matrice de son action par rapport à une base de $R_\dR(M)$. Ses coefficients pourront s'interpréter comme périodes $p$-adiques à la André en regardant $f$ comme une classe algébrique modulo $p$ du motif $M\otimes M^{\vee}$. 
D'intérêt particulier est le cas où $f$ est le Frobenius : certains auteurs \cite{Furusho,Brownp} ont définis les périodes $p$-adique associés à $M$ comme ses coefficients. 

Le point de vue des périodes $p$-adiques à la André est meilleure pour plusieurs raisons. Entre autres,  il donne des   bornes plus fine à la transcendance ainsi qu'une interprétation motivique de certaines relations naturelles, comme celles provenant du polynôme caractéristique du Frobenius, voir \cite[Remark 9.7]{AF}.
 \end{enumerate}
 \end{exemple}  
 \subsection*{Transcendance} Comme expliqué dans la Remarque \ref{rem mattia}, la Conjecture \ref{conj pGPCw}    prédit la transcendance de certaines périodes $p$-adiques. Le prochain but est de donner une borne au degré de transcendance de ces périodes (Théorème \ref{thm AF1}) ainsi qu'une conjecture  qui prédira ce degré (Conjecture \ref{CPGf}).
On montrera que cette dernière conjecture implique en fait la Conjecture \ref{conj pGPCw}, voir la Proposition \ref{prop mattia}.

\

Gardons les notations de la Définition \ref{defperp}. 
Considérons les catégories tannakiennes $\langle M\rangle $ et $\langle M_p\rangle $ engendrées par $M$ et $M_p$ ainsi que les groupes tannakiens $G_\dR(M)$ et $G_{\cris}(M_p)$ associés aux foncteurs fibres $R_\dR$ et $R_{\cris}$. 

Dans le cadre des périodes classiques,  Grothendieck démontre que leur degré de transcendence est borné par la dimension de $G_\dR(M)$. Notre résultat principal en est l'analogue $p$-adique.
\begin{thm}\label{thm AF1}
Le degré de transcendance des périodes $p$-adiques vérifie l'inégalité
\[  \degtr \mathcal{P}_p(M)\leq \dim G_\dR(M) - \dim G_{\cris}(M_p).\]
\end{thm}
\begin{rem}\label{troilo}(Transcendance classique vs transcendance $p$-adique.) 
\begin{enumerate}
\item
Cette inégalité pourrait sembler plus forte que celle du cas complexe mais la matrice rectangulaire des périodes $p$-adiques est en général plus petite que celle des périodes complexes. Elles ont la même taille uniquement dans le cas de réduction supersingulière ce qui revient à $G_{\cris}(M_p)=\{1\}.$
\item Dans le cadre complexe, le point crucial  est d'interpréter les périodes comme les coordonnées d'un $\C$-point du foncteur $T(M)=\Isom_{\langle M\rangle }^{\otimes}(R_B,R_\dR)$. Par la théorie tannakienne ce foncteur est représentable   par une variété affine sur $\overline{\Q}$. L'action naturelle de $G_\dR(M)=\Aut_{\langle M\rangle }^{\otimes}( R_\dR)$ sur $T(M)$ rend cette variété un torseur.

Dans le cas $p$-adique   le foncteur $Z_p$ n'est pas un foncteur fibre, pour des questions de dimension, ce qui le fait sortir du cadre tannakien. C'est tout de même un foncteur lax-monoïdal (ce qui revient à dire que le produit de classes algébriques est une classe algébrique). 

Le foncteur $\Isom_{\langle M\rangle }^{\otimes}(Z_p,R_\dR)$  est   vide en général, encore pour des raisons de dimension. On peut en revanche considérer les transformations naturelles tensorielles ou les plongements.  
\end{enumerate}
\end{rem}
\begin{thm}\label{thm AF2}
L'inclusion de foncteurs $\Emb_{\langle M\rangle }^{\otimes}(Z_p,R_\dR) \subseteq  \Nat_{\langle M\rangle }^{\otimes}(Z_p,R_\dR)$ est une égalité. Ces foncteurs sont représentables par une variété $H(M)$ affine sur $\overline{\Q}$.  L'action naturelle de $G_\dR(M)$ sur $H(M)$ est transitive. De plus on a un isomorphisme $H(M)_{\overline{\Q}_p}    
=G_\dR(M)_{\overline{\Q}_p}/ G_{\cris}(M_p).$
\end{thm}

\begin{rem} (Théorème \ref{thm AF1} implique Théorème \ref{thm AF2}.)
En analogie avec le cas classique on peut  interpréter
\begin{align}\label{bertpt}Z_p \otimes \overline{\Q}_p \hookrightarrow R_{\cris}=R_\dR  \otimes \overline{\Q}_p\end{align}
comme un  $ \overline{\Q}_p$-point de  $H(M)$. L'évaluation en ce point donne un morphisme d'algèbres
\begin{align}\label{eval} \eval  : \mathcal{O}(H(M)) \longrightarrow\overline{\Q}_p.\end{align}
Par construction, l'image de ce morphisme est l'algèbre $\mathcal{P}_p(\langle M\rangle )$ engendrée par toutes les périodes $p$-adiques de tous les motifs appartenant à la catégorie $\langle M\rangle $.
Contrairement au cas classique, ces périodes contiennent strictement celles de $M$, en général. Cela vient du fait que l'inclusion $Z_p(M)^{\otimes n}  \subset  Z_p(M^{\otimes n})$ est stricte en général : c'est le défaut d'une formule de Künneth pour les classes algébriques.

Le Théorème \ref{thm AF2} et les relations
\begin{align}   \mathcal{P}_p(M)   \subset  \mathcal{P}_p(\langle M\rangle )=   \Im \eval   \subset \overline{\Q}_p.\end{align}
prouvent le Théorème \ref{thm AF1} et même l'inégalité plus forte
\begin{align}   \degtr\mathcal{P}_p(M)   \leq \degtr\mathcal{P}_p(\langle M\rangle ) \leq   \dim H(M)=G_\dR(M) - G_{\cris}(M_p) .\end{align}
\end{rem}
La définition suivante est inspirée de travaux d'André \cite{Andre1,Andrep}.
\begin{defin}\label{def ram} (Condition de ramification.) 
On dit  qu'un motif $N$ est CM si  $\End(N)$ est un corps de nombres tel que $\dim_\Q\End(N)=\dim R_\dR(N)$. 

On dit que le nombre premier $p$  ne ramifie pas dans $\langle M\rangle $ si, pour tout $N$ dans $\langle M\rangle $ qui est CM, le nombre premier $p$ ne ramifie pas dans le corps de nombre $\End(N)$.
\end{defin}
\begin{exemple}
Soit $A$ une variété abélienne. Dans le cas où $N=\mathfrak{h}^1(A)$, imposer  que $\End(N)$ soit un corps de nombres  tel que $\dim_\Q\End(N)=\dim R_\dR(N)$ revient à demander que $A$ soit simple et CM. Dans ce cas demander que $p$ ne ramifie pas dans $\End(N)$ correspond à demander que $p$ ne ramifie pas dans son corps CM.
\end{exemple}
\begin{prop}\label{prop mattia}
Pour un motif $M'$ fixé, les nombres premiers qui ramifient dans  $\langle M'\rangle $ sont en nombre fini.
\end{prop}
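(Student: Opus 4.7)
My plan is to reduce the proposition, via Tannakian formalism, to showing that only finitely many number fields can arise as $\End(N)$ for CM motives $N$ in the finitely generated category $\langle M'\rangle$; the conclusion then follows since each number field has only finitely many ramified primes. To set up the Tannakian framework, I would fix a $\Q$-fiber functor on $\langle M'\rangle$ (for instance, the Betti realization via an embedding $\overline{\Q}\hookrightarrow\C$), yielding a motivic Galois group $G$ that is an algebraic group of finite type over $\Q$.

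The first key observation would be that every CM motive $N\in\langle M'\rangle$ with $F=\End(N)$ gives rise to a $\Q$-torus quotient of $G$: indeed $R(N)$ is a one-dimensional $F$-vector space on which $G$ acts $F$-linearly, so the image $T(N)$ of $G$ acting on $R(N)$ lies in the $\Q$-torus $\Res_{F/\Q}\G_m$. Being abelian, the surjection $G\twoheadrightarrow T(N)$ factors through the abelianization $G^{\ab}$, then through its maximal torus quotient $T_0$. Since $T_0$ is a fixed $\Q$-torus of finite rank, independent of $N$, it splits over some finite Galois extension $K_0/\Q$, and hence so does every $T(N)$.

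The second step is to show that $F$ embeds as a subfield of $K_0$. Once $T(N)$ is split over $K_0$, the representation $R(N)\otimes_\Q K_0$ decomposes into character eigenspaces under $T(N)_{K_0}$; since the commutant is the commutative algebra $F\otimes_\Q K_0$, each character must appear with multiplicity one, so there are exactly $d=[F:\Q]$ distinct characters. A dimension count then forces $F\otimes_\Q K_0\cong K_0^d$, which is equivalent to $K_0$ containing the Galois closure of $F$; in particular $F$ is isomorphic to a subfield of $K_0$.

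To conclude, since $K_0/\Q$ is a finite Galois extension, it admits only finitely many subfields (in bijection with subgroups of the finite group $\Gal(K_0/\Q)$). Therefore only finitely many number fields arise as $\End(N)$ for CM motives $N\in\langle M'\rangle$, and each such field has only finitely many ramified primes, so the set of primes ramifying in $\langle M'\rangle$ is finite. The main technical point I expect to be delicate is the construction of a $\Q$-fiber functor making $G$ an algebraic group of finite type over $\Q$, and the verification that the maximal torus quotient of its abelianization yields a common splitting field $K_0$ for all the $T(N)$; in the context of the paper, where one restricts to motives satisfying $\hom=\num$, the Betti realization provides such a fiber functor and the reductive/semisimple structure of $G$ makes the rest of the argument routine.
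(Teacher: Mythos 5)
The paper states this proposition without a proof (it is part of \cite{AF}), so I can only assess your argument on its own merits. Your Tannakian strategy is the natural one and essentially works: each CM motive $N\in\langle M'\rangle$ gives a diagonalizable quotient of the motivic Galois group $G$, and the double-centralizer argument in your third paragraph is exactly right — once $T(N)$ is split over $K_0$, the commutant $F\otimes_\Q K_0$ is commutative, so each character has multiplicity one, giving $F\otimes_\Q K_0\cong K_0^d$ and hence that $K_0$ contains the Galois closure of $F$. Only finitely many number fields (subfields of $K_0$) can then occur as $\End(N)$, and the union of their ramified primes is finite.

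One step needs tightening. You route the surjection $G\twoheadrightarrow T(N)$ through "the maximal torus quotient $T_0$ of $G^{\ab}$", but this is imprecise: $T(N)$, being a subgroup of $\Res_{F/\Q}\G_m$, is diagonalizable but possibly disconnected (e.g.\ if $G$ is disconnected), hence it need not be a quotient of $T_0$; moreover, a diagonalizable group whose character module has torsion does not have a canonical maximal torus quotient. The correct statement is that $G^{\ab}$ is itself diagonalizable: since one works with motives satisfying $\hom=\num$, the category $\langle M'\rangle$ is semisimple by Jannsen, so $G$ is reductive; a reductive group has no nontrivial unipotent quotients, and a commutative affine group over a field of characteristic zero is a product of a diagonalizable and a unipotent group, so $G^{\ab}$ has trivial unipotent part. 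As a diagonalizable group of finite type over $\Q$, $G^{\ab}$ splits over a finite Galois extension $K_0$, and every $T(N)$, being a quotient of $G^{\ab}$, then splits over $K_0$ as well. With this replacement of $T_0$ by $G^{\ab}$, the rest of your argument goes through unchanged.
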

\begin{conj}\label{CPGf}($p$-GPCs : Analogue $p$-adique de la version forte de la conjecture  des périodes de Grothendieck.)

Si $p$ ne ramifie pas dans $\langle M\rangle $ alors l'application d'évaluation \eqref{eval} est injective. De façon équivalente, l'espace homogène $H(M)$ est connexe et le $ \overline{\Q}_p$-point de  $H(M)$ induit par \eqref{bertpt} vit au-dessus du point générique de  $H(M)$ (ou encore l'inégalité 
$\degtr\mathcal{P}_p(\langle M\rangle ) \leq   \dim H(M)=G_\dR(M) - G_{\cris}(M_p)$ est en fait une égalité.)
\end{conj}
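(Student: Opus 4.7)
The plan is to follow the same outline as for the classical strong Grothendieck periods conjecture: reduce to showing the evaluation map has image of maximal transcendence degree, verify this in the CM case by explicit period computations, and propagate through the Tannakian formalism. Since the classical statement is essentially open outside of CM and a few special classes, realistically this proposal is a program rather than a complete proof.

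First I would reformulate the conjecture geometrically. The evaluation map of \eqref{eval} sends $\mathcal{O}(H(M))$ to $\overline{\Q}_p$, with image equal to $\mathcal{P}_p(\langle M\rangle)$. Since $H(M)$ is a homogeneous space under $G_{\dR}(M)$ by Theorem \ref{thm AF2}, injectivity is equivalent to (a) $H(M)$ being geometrically integral over $\overline{\Q}$ and (b) the point \eqref{bertpt} being the generic point of $H(M)_{\overline{\Q}_p}$. Under the identification $H(M)_{\overline{\Q}_p} = G_{\dR}(M)_{\overline{\Q}_p}/G_{\cris}(M_p)$, connectedness should follow from a structural analysis of the motivic Galois group. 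This is the stage where the non-ramification hypothesis on $p$ enters: when $p$ is unramified in every CM subquotient, $G_{\cris}(M_p)$ should not fall pathologically across distinct $\overline{\Q}$-components, in particular it should not split a torus factor of $G_{\dR}(M)$ into several geometric pieces.

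Second, I would establish the CM case. If $N$ is a CM motive with $p$ unramified in $\End(N)$, then $G_{\dR}(N)$ is a torus and $G_{\cris}(N_p)$ is a subtorus (the stabilizer of the Frobenius eigendecomposition, well defined thanks to unramifiedness); hence $H(N)$ is a torsor under a torus, automatically connected. The $p$-adic periods in this case are the Coleman--Ogus periods discussed in Example \ref{exem periodes p}, expressible as products of special values of the $p$-adic Gamma function at rational arguments determined by the CM type. Algebraic independence of these Gamma values, in cases where it is accessible via Chudnovsky--Anderson-type transcendence results, yields injectivity of the evaluation map. Passage to the Tannakian category generated by CM and Artin motives is then automatic from the compatibility of the evaluation map with tensor products and direct sums.

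The main obstacle is the non-CM case. Already for $M = \mathfrak{h}^1(E) \otimes \mathfrak{h}^1(E')^{\vee}$ with $E, E'$ two non-isogenous lifts of an ordinary elliptic curve (Example \ref{exem periodes p}), the periods are hypergeometric values studied by Katz, whose $p$-adic transcendence degree is unknown. This mirrors the classical strong Grothendieck conjecture, which is open precisely in the non-CM regime. A plausible path forward is to exploit the admissibility of filtered $\varphi$-modules (Remark \ref{rem faltings}) to pin down the algebraic structure of $p$-adic periods purely by their compatibility with Frobenius and filtration, reducing the conjecture to a transcendence statement about solutions of explicit $\varphi$-module equations; but turning such a reduction into an unconditional proof would require a genuinely new input of $p$-adic transcendence, which does not yet exist in the required generality and constitutes the hard part.
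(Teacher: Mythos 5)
The statement you are asked about is a \emph{conjecture} in the paper (Conjecture \ref{CPGf}); the paper gives no proof and explicitly records that the only case known is the Kummer motive $K_a$ of Example \ref{exem periodes p}(2), via the transcendence of special values of the $p$-adic logarithm \cite{Bert}, while the first interesting open case is already a CM elliptic curve with supersingular reduction, which would yield the (open) transcendence of $\Gamma_p(1/3)$. So no complete argument is expected, and your text honestly presents itself as a program. Measured against the paper, the Tannakian part of your outline (reformulating injectivity of \eqref{eval} as connectedness of $H(M)$ plus genericity of the point \eqref{bertpt}, using $H(M)_{\overline{\Q}_p}=G_\dR(M)_{\overline{\Q}_p}/G_{\cris}(M_p)$ from Theorem \ref{thm AF2}) is consistent with how the authors set things up.

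The genuine gap is in your second step, where you claim the CM case can be \emph{established} by ``Chudnovsky--Anderson-type transcendence results'' for products of $p$-adic Gamma values. No such $p$-adic analogue of Chudnovsky's theorem \cite{Chud} exists; worse, the direction of the difficulty is reversed: as the paper points out right after the conjecture, André observed that supersingular abelian varieties can have $p$-adic periods satisfying algebraic relations of non-motivic origin, because special values of $\Gamma_p$ at rationals are \emph{more often algebraic} than their complex counterparts. This is precisely why the ramification condition of Definition \ref{def ram} had to be built into the statement; it is a hypothesis designed to exclude known failures, not a lever that makes the CM case accessible. So the one place where your program asserts an unconditional result is not available, and the remaining steps (propagation from the CM case, and the non-CM case via admissibility of filtered $\varphi$-modules) would in any case only reduce the conjecture to a $p$-adic transcendence statement that, as you yourself note, does not currently exist. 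In short: the proposal does not prove the statement, but nothing in the paper does either, since it is stated as an open conjecture.
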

\begin{prop}
Si $M$ vérifie la Conjecture \ref{CPGf} alors pour tout $N$ dans $ \langle M\rangle $ on a une \[Z_0(N)=Z_p(N)\cap R_\dR(N), \]  c'est-à-dire la Conjecture \ref{conj pGPCw}  ($p$GPCw)   a réponse affirmative pour $N$.
\end{prop}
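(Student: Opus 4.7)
Le plan comporte deux étapes : montrer d'abord que la Conjecture CPGf pour $M$ entraîne CPGf pour tout $N\in\langle M\rangle$, puis exploiter la généricité du point canonique $x_N$ de $H(N)$ pour en déduire l'égalité voulue.

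\emph{Première étape.} L'inclusion $\langle N\rangle\subset\langle M\rangle$ induit des surjections tannakiennes $G_\dR(M)\twoheadrightarrow G_\dR(N)$ et $G_\cris(M_p)\twoheadrightarrow G_\cris(N_p)$. La restriction des transformations naturelles fournit un $\overline{\Q}$-morphisme $\pi : H(M)\to H(N)$ envoyant $x_M$ sur $x_N$ ; via l'identification du Théorème \ref{thm AF2}, $\pi_{\overline{\Q}_p}$ s'identifie à la projection entre les espaces homogènes $G_\dR(\cdot)_{\overline{\Q}_p}/G_\cris(\cdot_p)$ et est donc surjective, d'où la surjectivité de $\pi$ sur $\overline{\Q}$. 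Ces variétés étant homogènes sous leurs groupes tannakiens, elles sont lisses, donc connexité et irréductibilité y coïncident. Par CPGf pour $M$, $H(M)$ est irréductible et $x_M$ en est le point générique ; la surjectivité de $\pi$ force alors $H(N)$ à être irréductible et $x_N=\pi(x_M)$ à en être le point générique, ce qui est CPGf pour $N$.

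\emph{Deuxième étape.} Fixons $z\in Z_p(N)\cap R_\dR(N)$ et considérons le $\overline{\Q}$-morphisme
\[
g : H(N) \longrightarrow R_\dR(N),\qquad \varphi \longmapsto \varphi_N(z),
\]
la cible étant vue comme espace affine sur $\overline{\Q}$. Par définition du point canonique, $g(x_N)=z$ est un point fermé $\overline{\Q}$-rationnel de $R_\dR(N)$. Puisque $x_N$ est le point générique de la variété irréductible $H(N)$, l'image schématique de $g$ coïncide avec la fermeture de $\{z\}$, c'est-à-dire $\{z\}$ elle-même : $g$ est constant de valeur $z$. Or ce morphisme est $G_\dR(N)$-équivariant par post-composition des transformations naturelles, et l'action de $G_\dR(N)$ sur $H(N)$ est transitive (Théorème \ref{thm AF2}), d'où $h\cdot z=z$ pour tout $h\in G_\dR(N)$. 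Par reconstruction tannakienne, $z\in R_\dR(N)^{G_\dR(N)}=Z_0(N)\otimes_\Q\overline{\Q}$.

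Il reste à descendre la rationalité : $z$ appartenant au $\Q$-réseau $Z_p(N)\subset R_\dR(N)\otimes\overline{\Q}_p$ et l'inclusion $Z_0(N)\hookrightarrow Z_p(N)$ étant injective, on complète une $\Q$-base de $Z_0(N)$ en une $\Q$-base de $Z_p(N)$ ; l'indépendance $\overline{\Q}_p$-linéaire de cette base dans $R_\dR(N)\otimes\overline{\Q}_p$ force les coordonnées de $z$ sur $Z_0(N)$ à être rationnelles, d'où $z\in Z_0(N)$. L'obstacle principal est la première étape, qui demande une articulation soigneuse entre l'interprétation tannakienne des variétés $H(\cdot)$ et les propriétés schématiques (surjectivité, irréductibilité, propagation du point générique).
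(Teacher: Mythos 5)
Le m\'emoire n'inclut pas de d\'emonstration pour cette proposition (elle est renvoy\'ee \`a \cite{AF}), je ne peux donc pas comparer directement. Cela dit, votre argument me semble correct et constitue bien la preuve tannakienne naturelle dans ce cadre : propagation de la g\'en\'ericit\'e le long du morphisme de restriction $\pi: H(M)\to H(N)$, puis constance du morphisme d'\'evaluation $g$ par g\'en\'ericit\'e de $x_N$, \'equivariance, et descente de rationalit\'e via l'ind\'ependance $\overline{\Q}_p$-lin\'eaire de $Z_p(N)$ (qui d\'epend de l'hypoth\`ese $\hom=\num$ faite en amont, comme soulign\'e dans le texte).

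Une remarque de simplification : votre premi\`ere \'etape peut \^etre court-circuit\'ee. Il suffit de d\'efinir $g$ directement sur $H(M)$ par $\varphi\mapsto\varphi_N(z)$ (la restriction d'une transformation naturelle de $\langle M\rangle$ \`a l'objet $N$) : on a $g(x_M)=z$ puisque $x_M$ restreint \`a $\langle N\rangle$ redonne $x_N$. La g\'en\'ericit\'e de $x_M$ dans $H(M)$ irr\'eductible donne alors $g$ constant \'egal \`a $z$, l'\'equivariance sous $G_\dR(M)$ force $z\in R_\dR(N)^{G_\dR(M)}=\Hom_{\langle M\rangle}(\mathbbm{1},N)\otimes_\Q\overline{\Q}=Z_0(N)\otimes_\Q\overline{\Q}$, et la descente \`a $\Q$ se fait comme dans votre derni\`ere phrase. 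Ceci \'evite toute discussion de compatibilit\'e des identifications $H(-)_{\overline{\Q}_p}=G_\dR(-)_{\overline{\Q}_p}/G_\cris(-_p)$ sous $\pi$, point que vous affirmez mais ne v\'erifiez pas (on peut toutefois l'\'eviter en notant que $\pi$ est $G_\dR(M)$-\'equivariante, $\pi(x_M)=x_N$, et que $G_\dR(N)$ agit transitivement sur $H(N)$, ce qui donne directement la surjectivit\'e de $\pi$).
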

\begin{rem}
\begin{enumerate}
\item  Les preuves des résultats de cette section utilisent des techniques tannakiennes. Comme déjà mentionné, notamment dans la Remarque \ref{troilo}(2), on ne peut pas utiliser les résultats classiques tel quels mais il faut plutôt adapter leurs preuves.
\item   La condition de ramification (Définition \ref{def ram}) est inspirée d'une condition qu'André a imposée dans l'étude de ce genre de questions pour les variétés abéliennes à réduction supersingulière. Il avait remarqué que ces variétés pouvaient avoir des  périodes $p$-adiques vérifiant des relations algébriques non motiviques. Comme mentionné dans l'Exemple \ref{exem periodes p}(1), les périodes qui apparaissent pour de tels  motifs sont liées aux valeurs spéciales de la fonction Gamma $p$-adique. Ces dernières se trouvent être  plus souvent algébriques que leurs analogues complexes.
\item Assez peu est connu   sur la conjecture classique des périodes de Grothendieck. La version forte a été démontrée pour les courbes elliptiques CM par Chudnovsky \cite{Chud}. Le cas particulier de la courbe elliptique de Fermat   implique notamment la transcendance de $\Gamma_\C(1/3)$. La version faible (voir (GPCw) de la Remarque \ref{CPGw}) a été montrée pour les diviseurs sur les variétés abéliennes et sur les surfaces K3 par Bost et Charles \cite{BostCha}, en utilisant entre autre le théorème du sous-groupe analytique de Wüstholtz \cite{Wus}. Ce sont des résultats difficiles et on peut s'attendre à ce que leurs analogues $p$-adiques le soient aussi.

Le seul cas où la Conjecture \ref{CPGf} est vérifiée est pour le motif de Kummer $K_a$ de l'Exemple  \ref{exem periodes p}(2). Cela suit de la transcendence des valeurs spéciales du logarithme $p$-adique \cite{Bert}. Le premier cas ouvert intéressant serait celui des courbes elliptiques à réduction supersingulière, ce qui donnerait notamment la transcendence de $\Gamma_p(1/3)$, pour $p\equiv 2 [3]$. La version faible semble aussi difficile. Une petit résultat dans cette direction a été donné dans le cas des courbes elliptique non CM à réduction supersingulière \cite[Proposition 3.5]{AF}.
\end{enumerate}
\end{rem}

    \section{Motifs des schémas en groupes commutatifs}\label{AHP}
    Cette section résume deux travaux en collaboration avec Stephan Enright-Ward, Annette Huber et Simon Pepin Lehalleur \cite{AEH,AHP}. Ils portent sur l'anneau de Chow et le motif d'un schéma en groupes commutatifs et généralisent les théorèmes de Beauville \cite{Beau} et Deninger--Murre \cite{DeMu} qui traitent le cas des schémas abéliens. Nous expliquons quelles sont les subtilités qui apparaissent quand on quitte le cadre des schémas en groupes projectifs. Les motifs de Voevodsky deviennent essentiels, non seulement leur existence mais aussi la nature de leur construction : c'est un exemple du principe expliqué au \S \ref{complexes motiviques}.
    
    Dans ce qui suit $S$ est une variété de type fini et lisse sur un corps $k$ qui  jouera le rôle d'une base fixée. Tout $S$-schéma en groupe que l'on considérera sera lisse et de type fini à fibres connexes. Certains énoncés sont valables dans des meilleures généralités. Pour un $S$-schéma lisse $f:X\rightarrow S$, $\CH(X)$ indique l'anneau de Chow de l'espace total $X$. On continue à travailler avec les coefficients rationnels.

    \begin{thm}\label{thm beauville}(Beauville \cite{Beau}, Deninger--Murre \cite{DeMu})
    Soient $A$ un $S$-schéma abélien de dimension relative $g$ et $n_A:A \rightarrow A$ le morphisme de multiplication par $n$. Alors on a une décomposition
    \[\CH^i(A)=\bigoplus_{r=i}^{g+i}  \CH_{(r)}^i(A)  \]
    où $\CH_{(r)}^i(A) = \{\alpha \in \CH^i(A), n^*_A \alpha = n^r \alpha, \forall n\in \Z \}$.
   \end{thm}
   \begin{rem}
   \begin{enumerate}
   \item
   Ce théorème a été démontré par Beauville dans le cas $S=\Spec(k)$ et Deninger--Murre dans le cas général. Les deux résultats utilisent de façon cruciale la transformée de Fourier que l'on rappelle plus loin.
   \item Quand $S=\Spec(k)$, on a l'inclusion  \begin{align}\label{BBM}\ker \cl^i_A\supseteq\bigoplus_{r\neq 2i}^{g+i}  \CH_{(r)}^i(A) \end{align}
   qui vient du fait que $n^*_A $ agit sur la cohomologie de degré $s$ comme $n^s\cdot \Id.$
\item
   La décomposition du théorème induit une bigraduation sur l'anneau $\CH_{(\bullet)}^*(A)$. Quand $S=\Spec(k)$, la nouvelle graduation scinde la filtration de Bloch--Beilinson qui est conjecturée avoir certaine propriétés par rapport à l'application classe de cycle, notamment l'inclusion \eqref{BBM} devrait être une égalité. La filtration de Bloch--Beilinson est conjecturée exister pour tous les anneaux de Chow de toutes les variétés projectives et lisses sur $k$, mais en général cette filtration ne se scinde pas.   \end{enumerate}
   \end{rem}
   \begin{proof}
   Soient $A^\vee$ le schéma abélien dual, $\mathcal{P}\in \CH^1(A \times A^\vee) $ le diviseur associé au fibré de Poincaré et $\pi_1,\pi_2$ les projections de $A \times A^\vee$ sur les deux facteurs. On définit la transformée de Fourier
   \[\mathcal{F}_A :  \CH^*(A) \longrightarrow    \CH^*(A^\vee), \hspace{0.5cm} \alpha \mapsto (\pi_2)_* (\exp(\mathcal{P}) \cdot \pi_1^*\alpha). \]
   On vérifie que c'est un isomorphisme, dont l'inverse est essentiellement $\mathcal{F}_{A^\vee}$. On en déduit 
   la décomposition
    \begin{align}\label{percal}
    \CH^i(A)=\bigoplus_{s}\{\alpha \in \CH^i(A),  \hspace{0.5cm} \mathcal{F}_A (\alpha) \in  \CH^{s}(A)\}. 
    \end{align}
   Ensuite un calcul direct permet de voir comment $n^*_A $ agit sur chaque facteur de la décomposition \eqref{percal}. On retrouve ainsi la décomposition de l'énoncé et l'identification
    $  \CH_{(r)}^i(A) = \{\alpha \in \CH^i(A),  \hspace{0.5cm} \mathcal{F}_A (\alpha) \in  \CH^{g+i-r}(A)\}.$
  \end{proof}
  
   Nous montrons la généralisation suivante.
    \begin{thm}\label{thm AHP}
        Soient $G$ un $S$-schéma en groupes commutatifs  de dimension relative $d$ et $n_G:G \rightarrow G$ le morphisme de multiplication par $n$. Alors on a une décomposition
    \[\CH^*(G)=\bigoplus_{r=0}^{2 d}  \CH^*_{(r)}(G)  \]
    où $\CH_{(r)}^*(G) = \{\alpha \in \CH^*(G), n^*_G \alpha = n^r \alpha, \forall n\in \Z\}$. 
    \end{thm}
    \begin{rem}
    \begin{enumerate}
    \item On ne sait pas définir une transformée de Fourier pour un tel $G$ : l'application $\pi_2$ n'est pas propre, donc $(\pi_2)_*$  n'existe pas, et de plus le dual d'un tel $G$ est un $1$-motif en général et non pas une variété. 
    \item Le premier cas non trivial pour les groupes non projectifs est donné par $S=\Spec(k)$ et $G$  qui admet une suite exacte
    \[0 \longrightarrow \mathbb{G}_m \longrightarrow G  \longrightarrow  A  \longrightarrow 0 \]
    où $A$ est une variété abélienne. Dans ce cas le Théorème \ref{thm AHP} est  facile pour $\mathbb{G}_m$ et connu pour $A$ mais on ne peut pas le déduire directement pour $G$. Le problème est que cet énoncé se comporte bien pour les sommes directes mais mal pour les suites exactes. L'énoncé qui suivra sera plus adapté à ce genre de dévissage.
    \end{enumerate}
    \end{rem}
    \begin{defin}\label{def AHP}
Soient $\Sm/S$   la catégorie des $S$-schémas lisses de type fini et $\PSh(S)$ la catégorie des préfaisceaux  sur $\Sm/S$ à valeur dans les $\Q$-espaces vectoriels. Soient $\Q(G)\in  \PSh(S)$ le préfaisceau qui associe à chaque $Y\in  \Sm/S$ le $\Q$-espace vectoriel ayant comme base l'ensemble $\Hom_S(Y,G)$ et   $\underline{G}\in  \PSh(S)$ celui qui associe  à chaque $Y\in  \Sm/S$ le $\Q$-espace vectoriel  $\Hom_S(Y,G)\otimes_\Z \Q.$ La loi de groupe de $G$ induit une transformation naturelle
\[s_G : \Q(G)   \longrightarrow   \underline{G}.\]
Par construction de   $\DM(S)$, la catégorie des motifs relatifs\footnote{On considère ici uniquement la version stable de cette catégorie, c'est-à-dire la catégorie obtenue après $\otimes$-inversion du motif de Lefschetz. Il y a plusieurs descriptions de la catégorie  stable $\DM(S)$. Il se trouve qu'elles sont   équivalentes sous des hypothèses assez générales qui sont notamment satisfaites pour les  bases $S$ que l'on considère. La version qui est adaptée à la Définition \ref{def AHP} est celle des motifs étales étudiés par Ayoub. Cette  catégorie $\DM(S)$ est obtenue à partir de $D(\PSh(S))$ en localisant pour imposer la descente étale et l'invariance par $\mathbb{A}^1$-homotopie, puis en stabilisant.}, les préfaisceaux ci-dessus induisent des motifs et la transformation naturelle un morphisme entre eux que l'on notera \[\alpha_{G/S} : M(G/S) \longrightarrow   M_1(G/S).\]

Le motif $M(G/S) $ est appelé le motif de $G$ et le motif $M_1(G/S)$ est appelé le $1$-motif de $G$.
\end{defin}
    \begin{thm}\label{thm AHP2}
       Gardons les notations de la définition ci-dessus. Alors le motif $\Sym^r M_1(G/S)$ est nul   pour $r$ assez grand et  le morphisme $\alpha_{G/S}$ se prolonge en un unique morphisme de motifs en algèbres de Hopf 
    \begin{align}\label{iso AHP}\varphi_{G/S} : M(G/S)\longrightarrow \bigoplus_{r=0}  \Sym^r M_1(G/S)  \end{align}
    qui est de plus un isomorphisme. 
    \end{thm}

    \begin{rem}\label{garufa}
\begin{enumerate}    
\item (Motifs vs faisceaux.) Ce théorème n'est pas une conséquence formelle d'un énoncé sur les préfaisceaux. Remarquons par exemple  que le faisceau $\Sym^r \underline{G}$ n'est pas nul, puisque $\underline{G}$ est un faisceau en espaces vectoriels. 

Un phénomène plus subtile est le suivant : le théorème montre en particulier l'existence d'applications non nulles de $ M_1(G/S)$ vers $ M(G/S)$. En revanche, il n'y a pas d'application non nulle du faisceau  $\underline{G}$ vers $\Q(G)$. Esquissons l'argument. Soient $\alpha : \underline{G} \rightarrow \Q(G)$ une telle transformation naturelle et $\id_G\in  \underline{G} (G)$ l'application identité. La naturalité de $\alpha$ implique qu'il suffit de voir que $\alpha(\id_G)=0$. 
Posons $\alpha(\id_G)  = a_i\cdot f_i$ où $a_i$ sont des nombres rationnels  et les $f_i$ sont des endomorphismes de $G$. 

Pour montrer $a_i\cdot f_i=0$ considérons la naturalité par rapport aux morphismes $n_G$ de multiplication par $n$ :
 \[
\xymatrix{
    \id_G \ar@{|->}[ddd]   \ar@{|->}[rrr]&  &   &  \sum a_i\cdot f_i  \ar@{|->}[ddd] \\
     & \underline{G} \ar@{->}^{n_G}[d]   \ar@{->}^{\!\!\!\!\!\!\!\alpha}[r]  & \Q(G)(G) \ar@{->}^{n_G}[d] & \\
          & \underline{G}   \ar@{->}^{\!\!\!\!\!\!\!\alpha}[r]  & \Q(G)(G) & \\
    n_G=n\cdot  \id_G  \ar@{|->}[rr]  &   &  \sum na_i\cdot f_i \ar@{=}[r] & \sum a_i\cdot (n_G \circ  f_i ).
  }
    \]
On prétend que l'égalité  $\sum na_i\cdot f_i  = \sum a_i\cdot (n_G \circ  f_i )$ force tous les $f_i$ à être nuls. tout d'abord supposons par l'absurde qu'il y avait un $f_i$, disons $f_1$, qui n'était pas de torsion. Alors la liste des morphismes $n_G \circ  f_1 $ serait infinie et on pourrait choisir un $n$ tel que $n_G \circ  f_1$ n'apparaisse pas dans la liste des $f_i$. Ceci contredirait l'égalité $\sum na_i\cdot f_i  = \sum a_i\cdot (n_G \circ  f_i )$.

On peut alors supposer que les $f_i$ soient tous de torsion et  on peut donc  choisir un $n$ tel que  les $n_G \circ  f_i $ soient tous nuls.  L'égalité $\sum na_i\cdot f_i  = \sum a_i\cdot (n_G \circ  f_i )=0$  implique alors $\sum a_i\cdot f_i =0$. 
\item (Décomposition de Chow--Künneth.) On continue à noter par $n_G:G \rightarrow G$ le morphisme de multiplication par $n$. Remarquons que son action sur $ M_1(G/S)  $ vaut $n \cdot \Id$.  En particulier 
 l'isomorphisme \eqref{iso AHP} donne  une décomposition de $ M(G/S)  $ en espaces propres par rapport à l'action de $n_G$. On en déduit par ailleurs que cette décomposition est une décomposition de Chow--Künneth relative (voir la Conjecture \ref{conj CK} pour le cas absolu).

 \item   (Décomposition de l'anneau de Chow.)  La décomposition du point (2) donne une décomposition de \begin{align}\label{maipo}\Hom_{DM(S)}(M(G/S),\mathbbm{1}(p)[q])\end{align} en espaces propres par rapport à l'action de $n_G$. Comme ces $\Hom$ calculent\footnote{Les constructions de Grothendieck et de Voevodsky ont une convention de covariance différente, notamment les motifs de Chow se plongent dans les motifs de Voevodsky par un foncteur contravariant. C'est la raison pour laquelle l'objet $\mathbbm{1}(p)[q]$ apparaît à droite dans la formule \eqref{maipo}.} les groupes de Chow   supérieurs  \cite[Corollary 14.2.14]{CD} on  déduit le Théorème \ref{thm AHP}.

\item (Décomposition de motifs vs décomposition d'anneaux de Chow.) Pour les schémas abéliens $G=A$ les décompositions des anneaux de Chow au point (3) permettent de retrouver celle du motif au point (2) comme l'ont remarqué  Deninger et Murre. Le point est de considérer $A\times_S A$ comme schéma abélien sur $A$ et d'appliquer le Théorème  \ref{thm beauville} à ce schéma abélien : la diagonale se décomposera alors en somme de vecteurs propres. D'autre part la diagonale s'interprète comme l'identité du motif $M(A/S)$ et on peut verifier que cette décomposition  de $\Id \in \End (M(A/S))$ est  une décomposition en somme de projecteurs orthogonaux.

Pour un $G$ général, la formule \[ \End_{DM(S)}(M(G/S))= \Hom_{DM(S)}(M(G/S)  \otimes M(G/S)^{\vee},\mathbbm{1} ) \]
ne permet pas de relier ce groupe à l'anneau de Chow de $G\times_S G $. En effet la dualité de Poincaré  
identifie, à un twist et shift près,  $M(G/S)^{\vee}$ avec $M_c(G/S) $ qui n'est pas, en général, $M(G/S)$.
Dans ce cas la décomposition   des anneaux de Chow du Théorème \ref{thm AHP} ne permet pas de retrouver celle des motifs au point (2).

(L'argument ci-dessus montre qu'en général  les endomorphismes d'un motif sont reliés aux groupes de Chow uniquement dans le cas propre et lisse. Cela a déjà été signalé au \S \ref{motifs purs} et c'est le point qui limite la construction classique de Grothendieck au cadre propre et lisse.)

\item (Voevodsky vs Chow.) Dans le cas $G=A$ d'un schéma abélien, une formule \begin{align}\label{kunnemann} M(A/S)\cong \bigoplus_{r=0}  \Sym^r M_1(A/S)\end{align} a été montré par Künnemann encore à l'aide de la transformée de Fourier \cite{Ku1}. Une version plus faible, valable pour les motifs homologiques, a été discutée dans la   Proposition \ref{mot var abel}.

A l'époque de \cite{Ku1} on ne disposait pas des motifs de Voevodsky et le travail a été fait dans les motifs de Chow. Dans ce cas l'existence du motif $M_1(A/S)$ présent dans la formule \eqref{kunnemann} n'est pas du tout triviale : il faut construire un projecteur convenable de   $\End (M(A/S))$. Ce motif est en revanche facile à définir dans le cadre de Voevodsky (Définition \ref{def AHP}). Un des avantage des motifs de Voevodsky sur les motifs de Chow est notamment cette possibilité de disposer facilement de motifs par des constructions faisceautiques : c'est le principe que nous avons mentionné au \S \ref{complexes motiviques}.

 \end{enumerate}    

    \end{rem}
    \begin{proof}
    La preuve se base sur deux dévissages qui font chacun   l'objet d'un article. Un premier dévissage sert à se réduire au cas d'un corps algébriquement clos $S= \Spec(K) $, \cite{AHP}. Le deuxième \cite{AEH} est une réduction aux cas des variétés abéliennes, ce qui nous ramène essentiellement au résultat de Künnemann \cite{Ku1}.

    \subsection*{Réduction à $S= \Spec(K) $} Pour le premier dévissage on utilise le théorème suivant d'Ayoub \cite[Proposition 3.24]{Ayoubet}. Si $f:M \rightarrow N$ est un morphisme de motifs dans $\DM(S)$ alors pour voir si $f$ est un isomorphisme il suffit de voir si son tiré en arrière en tout point géométrique l'est. 
    
    Soit $i:  \Spec(K) \rightarrow S$ un point géométrique. Pour compléter le premier  dévissage il suffit alors de montrer que
    \begin{align}\label{chgbase}i^*\varphi_{G/S} =\varphi _{G\times_S  \Spec(K) / \Spec(K) }.\end{align}
    La formule \eqref{chgbase} est en fait le point technique du travail. Pour tout morphisme $g:T\rightarrow S$, le foncteur $g^*$ est caractérisé par la propriété que, pour tout $S$-schéma lisse $X$, on ait le   changement de base
        \begin{align}g^*M(X/S)=M(X\times_S T/T)\end{align}
comme pour la cohomologie à support compact. Or parmi les deux motifs qui interviennent dans le morphisme $\varphi_{G/S}$, seulement $M(G/S)$ est de cette forme. Le point est alors de trouver une résolution de $M_1(G)$ par un complexe dont tous les termes sont des sommes d'objets de la forme $M(X/S)$ en tous les degrés et toutes les flèches de connections sont induites par des morphismes de $S$-schémas. 

Nous construisons une telle résolution  inspirée par des construction de \cite{Breen}. On utilise uniquement des $X$ qui sont des puissances de $G$. Avec les notations de la Définition \ref{def AHP} on peut écrire son début sous la forme
\[ \cdots  \longrightarrow  \Q(G\times G)   \stackrel{t_G}{\longrightarrow} \Q(G)    \stackrel{s_G}{\longrightarrow}   \underline{G},\]
où
$ t_G([g_1,g_2] ) =     [g_1]+[g_2]-[g_1+g_2] .$

\subsection*{Réduction à $G=A$} Pour le deuxième dévissage fixons un corps algébriquement  clos $K$ et considérerons $S=\Spec(K)$. Dans la suite on allégera la notation en enlevant les $/S$.

 Le théorème de Chevalley nous dit qu'un groupe algébrique sur un corps $K$ s'insère dans une suite exacte
\[0 \longrightarrow  L \longrightarrow  G  \longrightarrow  A \longrightarrow  0,\]
où $A$ est une variété abélienne et $L$ est un groupe linéaire. On peut alors raisonner par récurrence sur la dimension de $L$ et se ramener à 
    \begin{align}\label{cas1} 0 \longrightarrow  \mathbb{G}_a \longrightarrow  G  \longrightarrow  H \longrightarrow  0\end{align}
ou
    \begin{align}\label{cas2} 0 \longrightarrow  \mathbb{G}_m \longrightarrow  G  \longrightarrow  H \longrightarrow  0,\end{align}
    où dans les deux cas $H$ est un groupe pour lequel l'énoncé est connu.
    Le cas \eqref{cas1} est facile : par homotopie l'énoncé pour $G$ est équivalent à l'énoncé pour $H$.
    Pour le  cas \eqref{cas2} l'argument est plus délicat. 
    
    Premièrement, la suite exacte \eqref{cas2} donne une suite exacte au niveau des foncteurs des points. Si on applique $\Sym^n$ a cette dernière on obtient un triangle
    \[ \Sym^{r-1} M_1(H)  \otimes  M_1(\mathbb{G}_m)  \longrightarrow  \Sym^r M_1(G)  \longrightarrow \Sym^r M_1(H),\]
    où on a utilisé $\Sym^2 M_1(\mathbb{G}_m) =0$ \cite[Corollary 2.1.5]{TMF}. Cette suite exacte permet de déduire par récurrence que $ \Sym^r M_1(G) $ s'annule pour $r$ assez grand.
    
    Deuxièmement, on complète  le $\mathbb{G}_m$-fibré $G \longrightarrow H$ en un $\mathbb{A}^1$-fibré $\bar{G} \longrightarrow H$ avec une section nulle. Le triangle de localisation par rapport à la section donne
    \[     M(G) \longrightarrow   M(\bar{G}) \longrightarrow M(H)(1)[2],\]
    or par homotopie $M(\bar{G})=M(H)$. On utilise maintenant l'hypothèse de récurrence et on en déduit le diagramme
    \begin{align} 
\xymatrix@C=14pt{   
M(G) \ar@{->}[rr]^(0.5){ } \ar@{-->}[dd]_(0.5){\psi} & & 
M(H) \ar@{->}[rr]^(0.45){ } \ar@{->}[dd]^(0.5){\varphi_{H}} & & 
M(H)(1)[2] \ar@{->}[dd]^(0.5){\varphi_{H}(1)[2]}   & & 
  \\
& & & & & & \\
\bigoplus_{r}  \Sym^r M_1(G) \ar@{->}[rr] & & 
\bigoplus_{r}  \Sym^r M_1(H) \ar@{->}[rr] & & 
\bigoplus_{r}  \Sym^r M_1(H)(1)[2].  & &    
} 
\end{align} 
L'existence de $\psi$ suit de la commutativité du carré de droite. Cette commutativité n'est pas gratuite, elle utilise la définition de l'application $\varphi_{H}$.

Par hypothèse de récurrence on déduit que  l'application $\psi$ est un isomorphisme. Malheureusement la récurrence n'est pas terminée puisqu'on ne sait pas lier $\psi$ à  $\varphi_{G}$. L'existence de $\psi$ a tout de même  une  conséquence importante :  $M(G)$ est un motif de dimension finie, voir la Remarque \ref{rem kimura}(2).

On montre que la réalisation de $\varphi_{G}$ est  un isomorphisme. On peut alors appliquer la Proposition \ref{prop conserv} à $\varphi_{G}$ et  $\psi^{-1}$ pour déduire que $\varphi_{G}$  est un isomorphisme après passage au quotient par l'équivalence homologique. On peut ensuite utiliser les propriétés des motifs de dimension finie pour conclure que $\varphi_{G}$  est un isomorphisme même avant passage au quotient (voir la Remarque \ref{rem kimura}(2), cf. le Théorème \ref{thm kimura} dans le cas pur).
\end{proof}

\begin{rem}
\begin{enumerate}
\item La notion de dimension finie dans les motifs a toujours été appliquée pour les motifs purs : elle ne se comporte pas bien par suite exacte et on connait des exemples de motifs mixtes qui ne sont pas de dimension finie.  À notre connaissance cette preuve est le premier exemple d'application de ces idées aux motifs mixtes.
\item Le Théorème \ref{thm AHP2} a permis à Huber et Kings de construire le polylogarithme  pour tous les schémas en groupes commutatifs.
\item Le Théorème \ref{thm AHP2} donne une description du motif d'un groupe algébrique commutatif $G$ en terme d'un motif assez simple, $M_1(G)$. On pourrait espérer que cela puisse aider à une meilleure compréhension des anneaux de Chow de $G$.
\end{enumerate}

\end{rem}
          \section{Construction de classes algébriques}\label{section lef}

    Cette section concerne la conjecture standard de type Lefschetz, introduite dans la    Conjecture \ref{CSTL}. Dans  un travail en cours en collaboration avec Mattia Cavicchi, Robert Laterveer et Giulia Saccà, nous étudions cette conjecture pour les variétés hyper-kähler  qui admettent une fibration lagrangienne. Le point de départ est une idée récente de Voisin \cite{Voisin} que l'on regarde dans la perspective du théorème de décomposition.
    
    Nous nous concentrerons dans la suite sur les variétés complexes, la conjecture standard de type Lefschetz est alors une instance particulière de la conjecture de Hodge. De façon assez surprenante elle en est aussi le pilier principal : André démontre que sous la conjecture standard de type Lefschetz  le transport parallèle de classes algébriques est encore algébrique \cite{AndIHES}. Il en déduit que cette conjecture impliquerait la conjecture de Hodge pour les variétés abéliennes. 
    
    La conjecture standard de type Lefschetz est connue pour les variétés abéliennes, voir la Proposition \ref{mot var abel}, et on sait en déduire le cas des surfaces. Plus récemment Charles et Markmann l'ont montrée pour les variétés hyper-kähler de type $KS^{[n]}$, \cite{ChaMark}.
    
La dimension d'une variété $X$ sera notée $d_X$ et la dimension de la fibre générique d'un morphisme $f$ sera notée $d_f$. Même en présence de faisceaux pervers on utilisera la convention classique pour les degrés cohomologiques.
\subsection{Une approche naïve}\label{approche}
Considérons une variété projective et lisse $X$ et supposons qu'elle admette un morphisme $f : X \rightarrow B$ vers une base $B$ qui est aussi projective et lisse. Nous nous demandons jusqu'à quel point connaître la conjecture standard de type Lefschetz pour $B$ et pour les fibres lisses de $f$ peut aider pour montrer la conjecture standard de type Lefschetz pour $X$. 

Le   théorème de décomposition  implique en particulier une décomposition\footnote{Une telle décomposition n'est pas unique en général, seulement la filtration perverse associée l'est. Deligne, puis De Cataldo, ont proposé des décompositions qui se comportent mieux que les autres \cite{Deldec,Decdec}.} de structures de Hodge
\begin{align}\label{BBD}  \HW^n(X)=\bigoplus_{a+b=n} \HW^a(B, ^p\!\!R^{b}f_* \Q  ).\end{align} 
Soient $\eta$ un diviseur sur $X$ qui soit relativement ample, $L_B$ un diviseur ample sur $B$ et $\beta$ son tiré en arrière sur $X$. Le théorème de décomposition fournit aussi les isomorphismes
\begin{align}\label{BBDeta} \cup \eta^{b} :  \HW^a(B, ^p\!\!R^{d_f-b}f_* \Q  ) \isocan \HW^a(B, ^p\!\!R^{d_f+b}f_* \Q  ), \end{align} 
\begin{align}\label{BBDbeta}  \cup \beta^{a} :  \HW^{d_B -a}(B, ^p\!\!R^{b}f_* \Q  ) \isocan \HW^{d_B +a}(B, ^p\!\!R^{b}f_* \Q  ).\end{align} 

La combination de \eqref{BBD}, \eqref{BBDeta} et \eqref{BBDbeta} peut suggérer que la conjecture standard de type Lefschetz se ramène à montrer que les inverses de ces isomorphismes sont algébriques  et à première vue on pourrait penser que ces derniers se ramènent uniquement à l'étude des fibres de $f$ et de  la base $B$. Mais il faut en fait faire attention à un certain nombre de subtilités.
\begin{enumerate}
\item Les faisceaux pervers $^p\!R^{b}f_* \Q$ dépendent aussi des fibres singulières du morphisme $f$.
\item Inverser l'action de $L_B$ sur la cohomologie de $B$ ne suffit pas à inverser $\beta$. En effet $f^* \HW(B)\subset \HW(X)$ ne représente que le facteur $\HW^*(B, ^p\!\!R^0f_* \Q  )$ de la décomposition $\eqref{BBD}$.
\item Même si on était capables de construire des correspondances algébriques $\Lambda_{ \eta^{b}}$ et $\Lambda_{\beta^{a}}$ qui agiraient comme les inverses de  \eqref{BBDeta} et \eqref{BBDbeta} il n'est pas clair de pouvoir les mettre ensemble pour  déduire une correspondance algébrique $\Lambda_{n}: \HW^{d_X+n}(X ) \isocan \HW^{d_X -n}(X  ).$ Il faudrait par exemple contrôler comment  $\Lambda_{ \eta^{b}}$ agit sur les  degrés pervers différents de $b$. C'est délicat,   notamment parce  que les opérateurs $\eta$ et $\beta$ ne sont pas bigradués en général.
\end{enumerate}
  
  Dans le  cas des variétés hyper-kähler  qui admettent une fibration lagrangienne on peut espérer contourner ces problèmes :  le théorème du support de Ngô donne une description explicite des faisceaux pervers $^p\!R^{b}f_* \Q$, un argument de Voisin permet grosso-modo  de construire une deuxième fibration lagrangienne qui inverse le rôle de $\eta$ et $\beta$, enfin Shen et Yin ont montré que les opérateurs $\eta$ et $\beta$  sont en fait   bigradués pour les fibrations lagrangiennes.
  
  \begin{prop}\label{lef rel}
  Soit $f:X\rightarrow B$ une application entre variétés projectives, lisses et connexes et soit $U$ l'ouvert de $B$ sur lequel l'application est lisse.
  \begin{enumerate}
  \item Supposons que la fibre générique vérifie la conjecture standard de type Lefschetz et que \begin{align}\label{hyp supp}^p\!R^{b}f_* \Q= IC ((R^{b}f_* \Q)_{\vert U}).  \end{align}
  Alors il existe des correspondances dans $X \times_B X$ dont l'action sur la cohomologie relative induit des isomorphismes $^p\!R^{d_f+b}f_* \Q \isocan  {}^p\!R^{d_f-b}f_* \Q$.
  \item  Supposons qu'il existe des correspondances dans $X \times_B X$ dont l'action sur la cohomologie relative induit des isomorphismes $^p\!R^{d_f+b}f_* \Q \isocan {}^p\!R^{d_f-b}f_* \Q$. Alors il existe une décomposition du motif \begin{align}\label{motBBD}   \mathfrak{h}(X)=\bigoplus_{ b} \mathfrak{h}(B, ^p\!\!R^{b}f_* \Q  )\end{align}
  où $R(\mathfrak{h}(B, ^p\!\!R^{b}f_* \Q  )) = \HW^*(B, ^p\!\!R^{b}f_* \Q  )$. De plus chaque facteur de la décomposition est autodual à un twist près.
\end{enumerate}
  \end{prop}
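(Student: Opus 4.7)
Mon plan est de traiter les deux parties s\'epar\'ement, la premi\`ere par un argument de sp\'ecialisation et la seconde par un relev\'e motivique de l'argument de Deligne.

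Pour la partie (1), je partirais de la fibre g\'en\'erique $X_\eta$ de $f$, qui est projective et lisse sur $k(B)$ et v\'erifie par hypoth\`ese la conjecture standard de type Lefschetz relativement \`a la classe ample $\eta_{|X_\eta}$. Ceci fournit un cycle $\gamma_{b,\eta}$ sur $X_\eta \times_{k(B)} X_\eta$ dont la classe de cohomologie inverse l'op\'erateur $\cup \eta^b$ en degr\'e $d_f - b$. Par \'etalement, j'obtiens un cycle sur $X_V \times_V X_V$ pour un ouvert dense $V \subset U$, et son adh\'erence sch\'ematique dans $X \times_B X$ d\'efinit la correspondance relative $\Lambda_b$ voulue. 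Au-dessus de $U$, les faisceaux $R^{d_f \pm b}f_*\Q_{|U}$ sont des syst\`emes locaux, et l'action de $\Lambda_b$ y co\"{\i}ncide par construction avec l'inverse de Lefschetz sur $V$, donc sur $U$ tout entier par densit\'e. Il reste \`a prolonger cet isomorphisme de syst\`emes locaux en un isomorphisme de faisceaux pervers sur $B$ : c'est ici qu'intervient l'hypoth\`ese de support. Le foncteur d'extension interm\'ediaire \'etant pleinement fid\`ele sur les syst\`emes locaux semi-simples, l'isomorphisme sur $U$ se prolonge de mani\`ere unique en un isomorphisme ${}^p\!R^{d_f + b}f_*\Q \isocan {}^p\!R^{d_f - b}f_*\Q$ dans la cat\'egorie des faisceaux pervers sur $B$, qui co\"{\i}ncide avec l'action de $\Lambda_b$ sur $Rf_*\Q$ via le th\'eor\`eme de d\'ecomposition.

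Pour la partie (2), je m'inspirerais de la strat\'egie de Deligne \cite{Deldec} donnant des d\'ecompositions canoniques dans la cat\'egorie d\'eriv\'ee. L'existence d'inverses alg\'ebriques aux op\'erateurs de Lefschetz relatifs au niveau des faisceaux pervers permet d'\'ecrire des formules polynomiales explicites dans $\cup \eta$ et dans les correspondances donn\'ees, d\'efinissant des projecteurs motiviques $p_b \in \CH(X \times X)$. Par construction, l'action cohomologique de $p_b$ via la d\'ecomposition \eqref{BBD} est la projection sur le facteur $\HW^*(B, {}^p\!R^b f_*\Q)$, d'o\`u la d\'ecomposition motivique \eqref{motBBD}. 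Pour l'autodualit\'e, je combinerais la dualit\'e de Poincar\'e sur $X$, qui donne $\mathfrak{h}(X)^\vee \cong \mathfrak{h}(X)(d_X)$, avec la dualit\'e relative et l'autodualit\'e des IC-extensions pour obtenir un isomorphisme $\mathfrak{h}(B, {}^p\!R^b f_*\Q)^\vee \cong \mathfrak{h}(B, {}^p\!R^{2d_f - b} f_*\Q)$ \`a un twist pr\`es. En le recombinant avec le rel\`evement motivique des isomorphismes ${}^p\!R^{d_f + b}f_*\Q \isocan {}^p\!R^{d_f - b}f_*\Q$ (obtenu encore par le proc\'ed\'e de Deligne), on conclut \`a l'autodualit\'e de chaque facteur.

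L'obstacle principal me semble \^etre la construction effective des projecteurs motiviques dans la partie (2), c'est-\`a-dire s'assurer que les correspondances $\Lambda_b$ interagissent correctement avec $\cup \eta$ modulo \'equivalence rationnelle, et non seulement modulo \'equivalence homologique. C'est exactement la subtilit\'e (3) \'evoqu\'ee au d\'ebut de la Section \ref{approche}, qui requiert l'existence d'assez de relations motiviques pour que la construction des projecteurs fonctionne au niveau des cycles. La partie (1) est conceptuellement plus simple : le th\'eor\`eme de support (fourni par l'hypoth\`ese) ram\`ene la question \`a l'ouvert $U$, o\`u l'\'etalement du cycle g\'en\'erique suffit.
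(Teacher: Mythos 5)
Your proof of part (1) follows the same route as the paper: spread out the Lefschetz inverse from the generic fiber, take closures in $X\times_B X$, and propagate from the open locus $U$ using the (fully faithful) intermediate-extension functor. The only difference is that you spell out the spreading-out and the full-faithfulness of $IC$ more explicitly than the paper, which simply says the action on $IC((R^bf_*\Q)_{|U})$ is controlled by the action on the local system and hence by the cohomology of the generic fiber. That is the same argument.

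For part (2), you invoke Deligne's construction of canonical decompositions \cite{Deldec}, while the paper invokes Kleiman's classical proof that the Lefschetz standard conjecture implies the K\"unneth standard conjecture \cite{GKL}. These are really the same mechanism (projectors given by polynomial expressions in the relative Lefschetz operator and its algebraic inverse), so there is no essential divergence. Your sketch of the autoduality argument, via Poincar\'e duality on $X$, Verdier self-duality of the $IC$-sheaves and the algebraic Lefschetz inverses, is also the expected one and a bit more explicit than the paper's remark.

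Where you go astray is in the identification of the \emph{main obstacle}. You single out the need for the relations to hold modulo rational equivalence, but the decomposition \eqref{motBBD} asserted here is at the level of the \emph{homological} motive $\mathfrak{h}(X)\in\Mot(k)$, not the Chow motive, so only relations modulo homological equivalence are needed; rational equivalence plays no role. The genuine subtlety that the paper warns about is different: the cohomological decomposition \eqref{BBD} is \emph{not unique} (only the perverse filtration is canonical), the relative correspondences respect only the filtration and not the chosen grading, and as a consequence one can only produce \emph{some} decomposition \eqref{motBBD} of motivic origin, not lift an arbitrary preassigned cohomological splitting. Your write-up does not address this non-uniqueness issue at all, and as stated it reads as if the motivic projectors were canonically attached to the given data, which is precisely what fails.
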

  \begin{rem} (Autour de la preuve.)
      La preuve de (1) est facile. Soit $Y$ la fibre générique de $f$. L'hypothèse sur $Y$ dans (1) fournit des correspondances dans $Y\times Y$,  dont les adhérences donnent des correspondances dans $X \times_B X$. Leur action sur la cohomologie relative   $ IC ((R^{b}f_* \Q)_{\vert U})$ est contrôlée par leur action sur le système local $(R^{b}f_* \Q)_{\vert U}$ et donc par l'action sur la cohomologie de $Y$.
  
   Pour (2) on suit l'argument classique qui montre que Lefschetz implique Künneth \cite{GKL}. Par rapport au cas absolu on prendra garde au fait  
que la décomposition \eqref{motBBD} n'est pas unique. L'action des correspondances relatives ne respecte pas cette graduation mais seulement la filtration associée.  Par ailleurs on ne sait pas démontrer que toute décomposition cohomologique \eqref{BBD} est la réalisation d'une décomposition motivique \eqref{motBBD}  mais seulement qu'il en existe au moins une qui est d'origine motivique.
   \end{rem}
   \begin{cor}\label{cor lef}
   Soit $f:X\rightarrow \mathbb{P}^1$ une fibration de Lefschetz dont la fibre générique vérifie la conjecture standard de type Lefschetz (par exemple $f$ est une fibration en surfaces). Alors il existe une décomposition du motif \begin{align}\label{motBBDLef}   \mathfrak{h}(X)=\bigoplus_{ b} \mathfrak{h}(\mathbb{P}^1, ^p\!\!R^{b}f_* \Q  )\end{align}
  où $R(\mathfrak{h}(\mathbb{P}^1, ^p\!\!R^{b}f_* \Q  )) = \HW^*(\mathbb{P}^1, ^p\!\!R^{b}f_* \Q  )$. De plus chaque facteur de la décomposition est autodual à un twist près.
   \end{cor}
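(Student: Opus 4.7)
Le corollaire est essentiellement une application directe de la Proposition \ref{lef rel}, donc le plan consiste � v�rifier l'hypoth�se \eqref{hyp supp} pour une fibration de Lefschetz $f:X\rightarrow \mathbb{P}^1$ puis � cha�ner les deux parties de la proposition.

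La premi�re �tape, qui constitue le c�ur technique de l'argument, sera de d�montrer que pour une fibration de Lefschetz on a bien l'�galit� $^p\!R^{b}f_* \Q = IC((R^{b}f_*\Q)_{\vert U})$, o� $U\subset \mathbb{P}^1$ est l'ouvert au-dessus duquel $f$ est lisse. Par d�finition d'une fibration de Lefschetz, les fibres singuli�res au-dessus des points de $\mathbb{P}^1\setminus U$ n'acqui�rent qu'un seul point double ordinaire chacune, et la monodromie locale est d�crite par la formule de Picard--Lefschetz (r�flexion par rapport � un cycle �vanescent). On en d�duira d'une part que les faisceaux des cycles �vanescents sont concentr�s en un seul degr�, d'autre part que le th�or�me du cycle invariant local s'applique : l'application de sp�cialisation est surjective. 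Cela entra�ne exactement l'absence de sommands gratte-ciel dans la d�composition du th�or�me de d�composition de Beilinson--Bernstein--Deligne--Gabber, et donc l'identification des faisceaux pervers $^p\!R^bf_*\Q$ avec les extensions interm�diaires $IC((R^bf_*\Q)_{\vert U})$.

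La deuxi�me �tape consistera � invoquer l'hypoth�se : la fibre g�n�rique de $f$ v�rifie la conjecture standard de type Lefschetz (cas que nous supposons, par exemple pour une fibration en surfaces o� la conjecture est connue en toute dimension au plus deux). La Proposition \ref{lef rel}(1) fournit alors des correspondances dans $X\times_{\mathbb{P}^1} X$ dont l'action sur la cohomologie relative induit les isomorphismes $^p\!R^{d_f+b}f_*\Q \isocan {}^p\!R^{d_f-b}f_*\Q$.

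Enfin, pour conclure, on appliquera la Proposition \ref{lef rel}(2) pour obtenir la d�composition motivique
\[\mathfrak{h}(X)=\bigoplus_b \mathfrak{h}(\mathbb{P}^1,\,^p\!\!R^bf_*\Q)\]
r�alisant la d�composition cohomologique correspondante, ainsi que l'autodualit� � un twist pr�s de chacun de ses facteurs. Le point le plus d�licat � surveiller dans la r�daction sera de bien justifier la condition \eqref{hyp supp}, et notamment le fait que la base est $\mathbb{P}^1$ (de dimension un) : c'est ici que la structure particuli�rement simple des singularit�s d'une fibration de Lefschetz et la simplicit� de la base se combinent pour exclure tout sommand ponctuel isol� dans le th�or�me de d�composition. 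Aucun contr�le suppl�mentaire sur l'interaction des op�rateurs $\eta$ et $\beta$ (point (3) de la discussion du \S \ref{approche}) n'est ici n�cessaire, puisque $\beta=0$ est forc� par $\dim B = 1$ dans la partie pertinente : la d�composition selon $a$ dans \eqref{BBD} est triviale d�s que $\mathbb{P}^1$ est simplement connexe, ce qui simplifie consid�rablement l'argument par rapport au cas d'une base g�n�rale.
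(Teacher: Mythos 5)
Votre plan de preuve est correct sur les points essentiels et suit l'approche implicite du texte : il s'agit bien de v\'erifier l'hypoth\`ese \eqref{hyp supp} pour une fibration de Lefschetz sur $\mathbb{P}^1$, ce qui est classique (d\'ecomposition en cycles \'evanescents via Picard--Lefschetz, th\'eor\`eme du cycle invariant local qui exclut les facteurs gratte-ciel), puis d'encha\^iner les deux parties de la Proposition \ref{lef rel}. C'est effectivement tout ce que le corollaire demande, et l'\'enonc\'e suit.

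En revanche, le dernier paragraphe contient une affirmation erron\'ee qu'il faut corriger, m\^eme si elle est sans incidence sur la validit\'e du corollaire. Vous \'ecrivez que \og $\beta=0$ est forc\'e par $\dim B = 1$\fg{} et que \og la d\'ecomposition selon $a$ dans \eqref{BBD} est triviale d\`es que $\mathbb{P}^1$ est simplement connexe\fg. Ceci est faux : $\beta = f^*L_B$ est la classe d'une fibre, non nulle dans $H^2(X)$, et seuls $\beta^a = 0$ pour $a\geq 2$. Surtout, la d\'ecomposition en $a\in\{0,1,2\}$ n'est nullement triviale : en particulier $H^1(\mathbb{P}^1, IC(L))$ est en g\'en\'eral non nul pour un syst\`eme local $L$ sur un ouvert de $\mathbb{P}^1$, et la connexit\'e simple de $\mathbb{P}^1$ ne joue aucun r\^ole ici. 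Le corollaire n'a pas besoin de cette d\'ecomposition en $a$ --- il n'affirme que la d\'ecomposition index\'ee par $b$, qui est exactement ce que donne la Proposition \ref{lef rel}(2) --- mais il est important de ne pas croire que la d\'ecomposition en $a$ est automatique : comme l'explique la Remarque \ref{rem solide} qui suit imm\'ediatement, c'est pr\'ecis\'ement la construction d'un inverse alg\'ebrique de $\cup\beta$ (i.e. la d\'ecomposition en $a$ avec facteurs autoduaux) qui manque pour d\'eduire la conjecture standard de type Lefschetz pour $X$, et c'est un probl\`eme ouvert.
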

 \begin{rem}\label{rem solide} (Du relatif à l'absolu.)
    Toute variété de dimension trois admet une fibration de Lefschetz après éclatement le long d'une courbe $C$. D'autre part la formule de  l'éclatement 
    \[\mathfrak{h}(Bl_C(X)) = \mathfrak{h}(X) \oplus \mathfrak{h}(C)(-1),\]
    due à Manin \cite{Manin},
    montre que la conjecture standard de type  Lefschetz pour $X$ se ramène à celle pour $Bl_C(X)$. Si l'on veut étudier la conjecture standard de type  Lefschetz pour une  variété $X$  de dimension trois on peut donc supposer que $X$ admet une telle fibration. Ce qui manque au corollaire ci-dessus pour déduire cette conjecture est l'existence d'une décomposition 
    \[ \mathfrak{h}(\mathbb{P}^1, ^p\!\!R^{b}f_* \Q  ) = \bigoplus_{ a=0}^2 \mathfrak{h}^a(\mathbb{P}^1, ^p\!\!R^{b}f_* \Q  ),\]
    où $R(\mathfrak{h}^a(\mathbb{P}^1, ^p\!\!R^{b}f_* \Q  )) = \HW^a(\mathbb{P}^1, ^p\!\!R^{b}f_* \Q  )$, telle que chaque facteur de la décomposition soit autodual à un twist près.
    
Pour construire cette décomposition on pourrait vouloir utiliser l'isomorphisme \eqref{BBDbeta} et essayer de construire une correspondance algébrique qui induise l'inverse. C'est une question qui ressemble au problème original de construction de l'inverse de l'opérateur de Lefschetz. On ne sait pas si c'est un problème plus simple, voir aussi la Remarque \ref{rem final}.
   \end{rem}
  \begin{thm}
  Soit $X$ une variété hyper-kähler de dimension $2n$ et $f:X\rightarrow \mathbb{P}^n$ une fibration lagrangienne dont toutes les fibres sont irréductibles. Supposons que le schéma en groupes $\Aut^0(f)$ soit polarisable (au sens de Ngô).
  Alors 
  \begin{align}\label{motBBDHK}   \mathfrak{h}(X)=\bigoplus_{ b=0}^{2n} \mathfrak{h}(\mathbb{P}^n, ^p\!\!R^{b}f_* \Q  )\end{align}
  où $R(\mathfrak{h}(\mathbb{P}^n, ^p\!\!R^{b}f_* \Q  )) = \HW^*(\mathbb{P}^n, ^p\!\!R^{b}f_* \Q  )$. De plus chaque facteur de la décomposition est autodual à un twist près.
  \end{thm}
  \begin{proof}
  On utilise le théorème du support de Ngô, \cite{Ngo} : quand les fibres sont toutes irréductibles on a bien l'hypothèse \eqref{hyp supp}. D'autre part la fibre générique est une variété abélienne, donc elle vérifie la conjecture standard de type Lefschetz. On peut alors utiliser la Proposition \ref{lef rel} pour conclure. 
  \end{proof}

     \begin{cor}
  Soit $X$ une variété hyper-kähler de dimension $2n$  de rang de Picard $2$ qui admet une fibration lagrangienne. Supposons que pour toute  fibration lagrangienne $g$ de n'importe quel  variétés hyper-kähler  birationnelle à $X$, les fibres de $g$ soient irréductibles et   le schéma en groupes $\Aut^0(g)$ soit polarisable.  Alors la conjecture standard de type Lefschetz est vraie pour $X$.
  \end{cor}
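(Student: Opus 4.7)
Le plan se d\'ecompose en trois \'etapes, en prolongeant le th\'eor\`eme pr\'ec\'edent par les techniques mentionn\'ees dans la section \ref{approche}.

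D'abord, on applique le th\'eor\`eme pr\'ec\'edent \`a la fibration donn\'ee $f:X\to\mathbb{P}^n$ (licite par hypoth\`ese) pour obtenir la d\'ecomposition motivique
\[
\mathfrak{h}(X)=\bigoplus_{b=0}^{2n}\mathfrak{h}(\mathbb{P}^n,{}^p\!R^{b}f_*\mathbb{Q})
\]
dont chaque facteur est autodual \`a un twist pr\`es. Comme expliqu\'e dans la Remarque \ref{rem solide}, il reste \`a raffiner chacun de ces facteurs en une d\'ecomposition suivant le degr\'e cohomologique sur $\mathbb{P}^n$ et \`a construire un inverse alg\'ebrique du cup-produit avec $\beta^{a}=f^{*}L^{a}$ (o\`u $L$ est l'hyperplan de $\mathbb{P}^n$) sur chaque morceau. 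Une fois ces inverses disponibles et compatibles \`a la graduation perverse, on peut les composer avec les inverses des op\'erateurs $\eta^{b}$ donn\'es par la Proposition \ref{lef rel} pour obtenir les correspondances $\gamma_{n}$ exig\'ees par la Conjecture \ref{CSTL}.

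Ensuite, on utilise l'hypoth\`ese de rang de Picard $2$. Par les r\'esultats de Matsushita sur les fibrations lagrangiennes, coupl\'es \`a la th\'eorie des murs de Markman--Bayer--Macr\`i pour les vari\'et\'es hyper-k\"ahler, une telle $X$ admet, sur un mod\`ele birationnel hyper-k\"ahler convenable $X'$, une seconde fibration lagrangienne $f':X'\to\mathbb{P}^{n}$. L'hypoth\`ese du corollaire garantit que le th\'eor\`eme pr\'ec\'edent s'applique aussi \`a $f'$. Un r\'esultat de Rie\ss{}/Huybrechts fournit un isomorphisme de motifs de Chow $\mathfrak{h}(X)\cong\mathfrak{h}(X')$ r\'ealis\'e par la fermeture du graphe de $X\dashrightarrow X'$. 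Via cet isomorphisme, la classe $\beta'=(f')^{*}L$ se transporte en une nouvelle classe alg\'ebrique sur $X$ qui, jointe \`a $\beta$, engendre un sous-espace de dimension $2$ du Picard rationnel de $X$; c'est le contenu de l'astuce de Voisin.

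Enfin, on fait agir la bigraduation de Shen--Yin. Pour une fibration lagrangienne, les op\'erateurs de cup-produit avec $\eta$ et $\beta$ sont bigradu\'es par rapport \`a la filtration perverse, ce qui d\'etruit l'obstacle (3) de la section \ref{approche}. L'observation-cl\'e de Voisin est que le transport birationnel $\mathfrak{h}(X)\cong\mathfrak{h}(X')$ \'echange (modulo correction par une classe relativement ample) le r\^ole de $\eta$ et de $\beta'$. En combinant les deux paires d'op\'erateurs $(\eta,\beta)$ sur $X$ et $(\eta',\beta')$ sur $X'$ on engendre une action alg\'ebrique de $\mathrm{sl}_{2}\times\mathrm{sl}_{2}$ sur $\mathfrak{h}(X)$, compatible \`a la bigraduation perverse. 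La th\'eorie \'el\'ementaire des repr\'esentations de $\mathrm{sl}_{2}$ fournit alors, dans chaque composante perverse, un inverse alg\'ebrique de $\cup\beta^{a}$ respectant la graduation, ce qui conclut.

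Le point d\'elicat sera l'\'etape de compatibilit\'e entre les deux fibrations: il faudra v\'erifier que la correspondance birationnelle $X\dashrightarrow X'$ respecte (motiviquement) les filtrations perverses associ\'ees \`a $f$ et $f'$, et que les bigraduations de Shen--Yin sur $X$ et sur $X'$ se recollent effectivement en une action de $\mathrm{sl}_{2}\times\mathrm{sl}_{2}$ par correspondances alg\'ebriques. En particulier, il faudra montrer que la classe transport\'ee $\beta'$ pr\'eserve la filtration perverse motivique de $f$ avec le comportement bigradu\'e pr\'edit par Shen--Yin; tout le reste de l'argument est alors formel \`a partir de cette structure $\mathrm{sl}_{2}\times\mathrm{sl}_{2}$ et de la Proposition \ref{lef rel}.
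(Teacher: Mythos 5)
Votre démonstration suit essentiellement la même stratégie que celle du mémoire : on applique le théorème aux deux fibrations lagrangiennes (la seconde obtenue par l'argument de Voisin sur un modèle birationnel, dont le motif est le même par Riess), puis on invoque la bigraduation de Shen--Yin pour forcer la compatibilité des deux décompositions, ce qui donne une bi-décomposition dont tous les facteurs sont autoduaux et entraîne la conjecture de type Lefschetz. Votre reformulation via une action de $\mathrm{sl}_2\times\mathrm{sl}_2$ par correspondances algébriques est un habillage équivalent de cette compatibilité, et vous identifiez correctement le même point délicat (la cohérence des deux graduations perverses) que le mémoire résout par le théorème de Shen--Yin.
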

    \begin{proof}
    Fixons $f  : X \rightarrow \mathbb{P}^n$ une fibration lagrangienne et soit $\eta$ et $\beta$ les diviseurs introduits dans \S \ref{approche}.
Un argument de Voisin \cite{Voisin}  montre grosso-modo l'existence d'une fibration lagrangienne $g: X \rightarrow \mathbb{P}^n$ où le rôle de $\eta$ et $\beta$ est  inversé. En fait $g$ existe seulement sur une variété  hyper-kähler qui est birationnelle à $X$, mais elles ont le même motif \cite{Riess}.

On peut alors appliquer le théorème ci-dessus à $f$ et $g$ pour déduire deux décompositions du motif  $\mathfrak{h}(X)$ dont tous les facteurs sont autoduaux.
Il n'est pas clair que ces deux décompositions soient compatibles, voir aussi le point (3) dans \S \ref{approche}.  Mais un théorème de Shen et Yin montre que les opérateurs $\eta$ et $\beta$ sont bigradués pour les fibrations lagrangiennes : on peut voir que ceci    force les deux décompositions à être   compatibles. La bidécomposition que l'on en déduit   est plus fine que \eqref{BBD} et a tous les facteurs autoduaux, ce qui implique la conjecture standard de type Lefschetz.
\end{proof}
Le corollaire ci-dessus s'applique par exemple aux variétés hyper-kähler construites par Laza--Saccà--Voisin \cite{LSV}. 
\begin{rem} (Généralisations.)
On souhaite se débarrasser de l'hypothèse d'irréductibilité des fibres dans le théorème ci-dessus, ce qui permettrait de l'enlever aussi dans son corollaire et de pouvoir l'appliquer à beaucoup plus de variétés. 

Si les fibres ne sont pas irréductible les faisceaux pervers $^p\!R^{b}f_* \Q $ ne vérifient plus \eqref{hyp supp}. Cependant le théorème du support de Ngô décrit aussi la nature des faisceaux pervers supportés sur les sous-variétés strictes de la base. Il montre  l'existence de certaines variétés abéliennes contenues dans certaines fibres singulières dont la cohomologie contrôle les faisceaux pervers. On pense qu'une variante stratifiée de la Proposition \ref{lef rel} devrait s'appliquer à ce contexte général. On utilisera encore que les variétés abéliennes vérifient la conjecture standard de type Lefschetz.
\end{rem}
   
   \begin{rem}\label{rem final} (Du relatif à l'absolu, suite.)
    Reprenons les notations de la Remarque \ref{rem solide}. On souhaiterait montrer  l'existence d'une décomposition 
    \begin{align}\label{gardel} \mathfrak{h}(\mathbb{P}^1, ^p\!\!R^{b}f_* \Q  ) = \bigoplus_{ a=0}^2 \mathfrak{h}^a(\mathbb{P}^1, ^p\!\!R^{b}f_* \Q  ),\end{align}
    où $R(\mathfrak{h}^a(\mathbb{P}^1, ^p\!\!R^{b}f_* \Q  )) = \HW^a(\mathbb{P}^1, ^p\!\!R^{b}f_* \Q  )$, telle que chaque facteur de la décomposition soit autodual à un twist près : ceci montrerait la conjecture standard de type Lefschetz pour les variétés   de dimension trois.
    
Pour construire cette décomposition on pourrait vouloir utiliser l'isomorphisme \eqref{BBDbeta} induit par la classe $\beta$ et essayer de construire une correspondance algébrique qui induise l'inverse. Inspirés par l'idée de Voisin esquissé dans la preuve du corollaire ci-dessus on peut essayer de construire une nouvelle fibration $g: X \rightarrow \mathbb{P}^2$ telle que $\beta$ soit relativement ample (ces constructions sont toujours possibles après éclatement). Une telle fibration induit   une décomposition
  \begin{align}\label{varela}   \mathfrak{h}(X)=\bigoplus_{ a=0}^2 \mathfrak{h}(\mathbb{P}^2, ^p\!\!R^{a}g_* \Q  )\end{align}
  où $\cup \beta  : \mathfrak{h}(\mathbb{P}^2, ^p\!\!R^{0}g_* \Q ) \isocan \mathfrak{h}(\mathbb{P}^2, ^p\!\!R^{2}g_* \Q)$ admet un inverse algébrique. Le problème est qu'en général la décomposition \eqref{varela} n'induit pas la décomposition \eqref{gardel}. C'est encore lié au fait que les actions de $\eta$ et $\beta$ ne sont pas bigraduées en général.
  \end{rem}
  \newpage

\bibliographystyle{alpha}	
\bibliography{masterbib}
\end{document}